\newtheorem{theorem}{Theorem}[section]
\newtheorem{lemma}[theorem]{Lemma}
\newtheorem{proposition}[theorem]{Proposition}
\newtheorem{corollary}[theorem]{Corollary}
\theoremstyle{definition}
\newtheorem{definition}[theorem]{Definition}
\theoremstyle{remark}
\numberwithin{equation}{section}
\newcommand{\st}[1]{\ensuremath\overline{#1}}
\newcommand{\texplus}{\raisebox{.4mm}{$\scriptscriptstyle{+}\!$}}
\newcommand{\texminus}{\raisebox{.4mm}{$\scriptscriptstyle{-}\!$}}
\newcommand{\ial}{\ensuremath\stackrel{i}{\longleftarrow}}
\newcommand{\iar}{\ensuremath\stackrel{i}{\longrightarrow}}
\newcommand{\B}{\ensuremath\mathcal{B}}
\newcommand{\e}{\ensuremath\mathbf{e}}
\newcommand{\Q}{\ensuremath\mathcal{Q}}
\newcommand{\wt}{\ensuremath\mathrm{wt}}
\newcommand{\SSYT}{\ensuremath\mathrm{SSYT}}
\newcommand{\SSHT}{\ensuremath\mathrm{SSHT}}
\newcommand{\SSST}{\ensuremath\mathrm{SSST}}
\newcommand{\Yam}{\ensuremath\mathrm{Yam}}
\newcommand{\sym}{\ensuremath\mathrm{sym}}
\newcommand{\rect}{\ensuremath\mathrm{rect}}
\newcommand{\upe}{\ensuremath \varepsilon}
\newcommand{\downf}{\ensuremath \varphi}
\newlength\cellsize \setlength\cellsize{12\unitlength}
\newcommand\cellify[1]{\def\thearg{#1}\def\nothing{}%
\ifx\thearg\nothing
\vrule width0pt height\cellsize depth0pt\else
\hbox to 0pt{\usebox2\hss}\fi%
\vbox to 12\unitlength{
\vss
\hbox to 12\unitlength{\hss$#1$\hss}
\vss}}
\newcommand\tableau[1]{\vtop{\let\\=\cr
\setlength\baselineskip{-12000pt}
\setlength\lineskiplimit{12000pt}
\setlength\lineskip{0pt}
\halign{&\cellify{##}\cr#1\crcr}}}
\newcommand{\cir}[1]{\def\thearg{#1}\def\nothing{}%
\ifx\thearg\nothing\vrule width0pt height\cellsize depth0pt%
  \else\hbox to 0pt{\usebox4\hss}\fi%
  \vbox to 12\unitlength{\vss\hbox to 12\unitlength{\hss$#1$\hss}\vss}}
\definecolor{boxgray}{gray}{.8}
\newcommand{\cb}{\color{boxgray}\rule{1\cellsize}{1\cellsize}\hspace{-\cellsize}\usebox2}
\newcommand{\tb}{%
  \psset{unit=\cellsize}
  \begin{pspicture}(1,1)
    \psset{linewidth=0.1ex}
    \pscustom[fillstyle=solid,fillcolor=boxgray]{
      \pspolygon(0,0)(1,1)(0,1)}
  \end{pspicture}}
\newcommand{\xb}{%
  \psset{unit=\cellsize}
  \begin{pspicture}(1,1)
    \psset{linewidth=0.1ex}
    \pscustom[fillstyle=solid,fillcolor=boxgray]{
      \pspolygon(0,0)(0,1)(1,1)(1,0)}
    \psline(0,0)(1,1)
    \psline(1,0)(0,1)
  \end{pspicture}}
\newcommand{\Sami}[1]{\todo[size=\tiny,inline,color=blue!30]{#1 \\ \hfill --- Sami}}
\definecolor{lblue}{rgb}{0.647, 0.749, 0.972}
\begin{document}


\title[Characterization of queer crystals]{A local characterization of crystals for the \\ quantum queer superalgebra}  

\author[Assaf]{Sami Assaf}
\address{Department of Mathematics, University of Southern California, 3620 South Vermont Avenue, Los Angeles, CA 90089-2532, U.S.A.}
\email{shassaf@usc.edu}
\thanks{Work supported in part by a Simons Foundation Collaboration Grant for Mathematicians (Award 524477, S.A.).}

\author[Oguz]{Ezgi Kantarci Oguz}
\address{Department of Mathematics, University of Southern California, 3620 South Vermont Avenue, Los Angeles, CA 90089-2532, U.S.A.}
\email{kantarci@usc.edu}

\subjclass[2010]{Primary 05E05; Secondary 05E10, 20G42}



\keywords{Schur $P$-polynomials, shifted tableaux, crystals, queer crystals}

\begin{abstract}
  We define operators on semistandard shifted tableaux and use Stembridge's local characterization for regular graphs to prove they define a crystal structure. This gives a new proof that Schur $P$-polynomials are Schur positive. We define queer crystal operators (also called odd Kashiwara operators) to construct a connected queer crystal on semistandard shifted tableaux of a given shape. Using the tensor rule for queer crystals, this provides a new proof that products of Schur $P$-polynomials are Schur $P$-positive. Finally, to facilitate applications of queer crystals in the context of Schur $P$-positivity, we give local axioms for queer regular graphs, generalizing Stembridge's axioms, that partially characterize queer crystals.
\end{abstract}

\maketitle

%
\section{Introduction}
%
\label{sec:introduction}

Kashiwara \cite{Kas91} introduced crystal bases to study representations of quantized universal enveloping Lie algebras. Canonical bases, developed independently by Lusztig \cite{Lus90}, study the same problem from a geometric viewpoint. A \emph{crystal graph} is a directed, colored graph with vertex set given by the crystal basis and directed edges given by deformations of the Chevalley generators. Crystal graphs encode important information for the corresponding representations. For example, the character of the crystal coincides with the character of the representation, and branching rules and tensor decompositions can be computed combinatorially on the crystals. The crystal basis for the general linear group is naturally indexed by semistandard Young tableaux, and there is an explicit combinatorial construction of the crystal graph on tableaux developed independently by Kashiwara and Nakashima \cite{KN94} and Littelmann \cite{Lit95}. Furthermore, Stembridge \cite{Ste03} gave a local characterization of the crystal graph for the general linear group (and, more generally, for any simply laced Lie type) that allows one to determine whether a given crystal arises as a crystal for a highest weight representation.

The characters of connected highest weight crystals for the general linear group are given by Schur polynomials \cite{Jac41,Sch01}: the generating polynomials for semistandard Young tableaux. Schur polynomials are ubiquitous throughout mathematics, arising as irreducible characters for polynomial representations of the general linear group, Frobenius characters for irreducible representations of the symmetric group, and polynomial representatives for the cohomology classes of Schubert cycles in Grassmannians. Given the representation theoretic and geometric context, a quintessential question that arises is whether a given polynomial is Schur positive, meaning that it can be realized as a character for a representation of the general linear group. One approach for such problems that simultaneously sheds light on the underlying representation theory is to define a crystal structure on the objects that generate the given polynomial, and then  use Stembridge's local characterization to prove Schur positivity. 

To illustrate this approach and to motivate our generalization of this theory, we consider Schur's $P$-polynomials \cite{Sch11}. Schur $P$-polynomials arise as characters of tensor representations of the queer Lie superalgebra \cite{Ser84}, characters of projective representations of the symmetric group \cite{Ste89}, and representatives for cohomology classes dual to Schubert cycles in isotropic Grassmannians \cite{Pra91}. They enjoy many properties parallel to Schur polynomials. Stanley conjectured that Schur $P$-polynomials are Schur positive, and this follows from Sagan's shifted insertion \cite{Sag87} independently developed by Worley \cite{Wor84}. Assaf \cite{Ass18} gave another proof using the machinery of dual equivalence graphs \cite{Ass15}. In this paper, we present a new crystal-theoretic proof of the Schur positivity of Schur $P$-polynomials.

Hawkes, Paramonov, and Schilling \cite{HPS17} constructed a crystal on semistandard shifted tableaux, the generating objects for Schur $P$-polynomials. They use Haiman's mixed insertion \cite{Hai89} to associate to each shifted tableau a Young tableau and, using this correspondence, transport the usual crystal structure on Young tableaux to the shifted setting. Therefore, while this construction gives a desirable crystal-theoretic interpretation of the Schur positivity of Schur $P$-polynomials, it does not give a new proof of positivity in the sense that it relies on the positivity that follows from shifted insertion \cite{Wor84,Sag87}. In this work, we present a direct approach using Stembridge's local characterization that can also be seen as giving a new, direct proof that the crystal on shifted tableaux commutes with shifted insertion.

As noted above, Schur $P$-polynomials arise as characters of tensor representations of the queer Lie superalgebra. Lie superalgebras are algebras with a $\mathbb{Z}/2\mathbb{Z}$ grading, allowing for two families of variables (one commuting and one not) to interact. Originally arising from mathematical physics in connection with supersymmetry, Lie superalgebras were formalized mathematically and classified by Kac \cite{Kac77}. One well-studied superalgebra generalization of the general linear Lie algebra is the queer superalgebra. Quantized universal enveloping algebras were developed for the queer superalgebra by Sergeev \cite{Ser84}, with the corresponding crystal theory developed by Grantcharov, Jung, Kang, Kashiwara, and Kim \cite{GJKK10,GJKKK10}.

Grantcharov, Jung, Kang, Kashiwara, and Kim \cite{GJKKK10} gave an explicit construction of the queer crystal on semistandard decomposition tableaux \cite{Ser10}, an alternative combinatorial model for Schur $P$-polynomials developed by Serrano. They raised the question of whether the set of shifted semistandard Young tableaux of a fixed shape has a natural crystal structure. To answer this in the affirmative, we extend our crystal operators to include queer operators, also called odd Kashiwara operators, that augment the crystal with additional edges in such a way that the graph on semistandard shifted tableaux of fixed shape is connected, with a unique highest weight corresponding to the highest weight tensor representation. 

Many polynomials that arise in representation theoretic or geometric contexts can be expressed as a non-negative sum of Schur $P$-polynomials. For example, Hiroshima \cite{Hir} recently gave a queer crystal for type C Stanley symmetric functions, a generalization of Stanley symmetric functions \cite{Sta84} introduced for types B and C by Billey and Haiman \cite{BH95}, and Assaf and Marberg recently developed a queer crystal for involution Stanley symmetric functions introduced by Hamaker, Marberg, and Pawlowski \cite{HMP17}. In order to give a universal approach to proving Schur $P$-positivity of these and other functions, parallel to the axiomatization of shifted dual equivalence graphs \cite{Ass18,BHRY14}, we present local axioms for queer regular graphs. We prove that our explicit construction on semistandard shifted tableaux, and hence all normal queer crystals, are queer regular, and we show that in many cases the converse holds as well. In particular, we believe that our axioms can be tightened give a local characterization for normal queer crystals, thus providing a powerful new tool in symmetric function theory.

Our paper is organized as follows. In \S\ref{sec:schur}, we review the crystal theory for the general linear group from the combinatorial perspective. We review crystals and normal crystals that arise for highest weight representation, we review the explicit crystal on semistandard Young tableaux, and we present Stembridge's local characterization for such crystals. In \S\ref{sec:schur-P}, we apply Stembridge's axioms to prove that our explicit operators on semistandard shifted tableaux define a normal crystal. We begin by reviewing the combinatorics of shifted tableaux, then define our operators on shifted tableaux, and prove that Stembridge's axioms are satisfied. We also present corollaries demonstrating the utility of the resulting Schur expansion for Schur $P$-polynomials, including a short proof that a Schur $P$-polynomial is a single Schur polynomial if and only if the indexing shape is a staircase. In \S\ref{sec:graphs-Q}, we extend our construction on shifted tableaux to the queer setting. We review queer crystals and normal queer crystals that arise for tensor representations of queer Lie superalgebras, we define an explicit normal queer crystal on shifted tableaux, and we present our local axioms for such crystals. Our results and constructions were first announced in \cite{AO18}, where we conjectured that our axioms were sufficient. While that conjecture is false, we expect that a refinement might be true.

%
\section{Crystals for the quantum general linear Lie algebra}
%
\label{sec:schur}

Kashiwara \cite{Kas90,Kas91} introduced crystal bases in his study of the representation theory of quantized universal enveloping algebra $U_q(\mathfrak{g})$ for Lie algebra $\mathfrak{g}$. In this section, we review the theory of crystal bases and crystal graphs, focusing solely on the case of $U_q(\mathfrak{gl}(r))$ to simplify the exposition and keep it self-contained. In \S~\ref{sec:crystal-A}, we review crystal bases in the language of root systems, restricting to type $A_{r+1}$. In \S~\ref{sec:tableaux-A}, we review tableaux combinatorics and present the explicit combinatorial realization of crystal graphs on tableaux due to Kashiwara and Nakashima \cite{KN94} and Littelman \cite{Lit95}. In \S~\ref{sec:local-A}, we review an alternative local axiomatization of tableaux crystals due to Stembridge \cite{Ste03} that will be central to our proofs.

\subsection{Crystal bases and crystal graphs}
\label{sec:crystal-A}

We use the language of root systems to define crystal bases of type $A_{r+1}$, though the exposition is self-contained and no familiarity with Lie theory is assumed; see \cite{BS17} for further details. Let $\e_1, \e_2, \ldots, \e_{r+1}$ be the standard basis for $V = \mathbb{R}^{r+1}$ with the usual inner product. Consider the \emph{root system} $\Phi = \{\e_i - \e_j \mid i \neq j\}$. We refer to the \emph{positive roots} as the subset $\Phi^{+} = \{\e_i - \e_j \mid i<j\}$. Let $\alpha_i = \e_i - \e_{i+1}$ for $i=1,\ldots,r$ denote the \emph{simple roots}. The \emph{weight lattice} is $\Lambda = \mathbb{Z}^{r+1}$, and the \emph{dominant weights} $\Lambda^{+} \subset \Lambda$ are those $\lambda \in \Lambda$ such that $\lambda_1 \geq \lambda_2 \geq \cdots \geq \lambda_{r+1} \geq 0$. 

\begin{definition}
  A \emph{crystal} of dimension $r+1$ is a nonempty set $\B$ not containing $0$ together with \emph{crystal operators} $e_i, f_i  :  \B \rightarrow \B \cup \{0\}$ for $i=1,2,\ldots,r$ and a \emph{weight map} $\wt : \B \rightarrow \Lambda$ satisfying the conditions
  \begin{enumerate}
  \item for $b,b^{\prime}\in\B$, $e_i(b)=b^{\prime}$ if and only if $f_i(b^{\prime}) = b$, and in this case we have $\wt(b^{\prime}) = \wt(b) + \alpha_i$;
  \item for $b\in\B$ and $i=1,\ldots,r$, we have $\downf_i(b) = (\wt(b)_i - \wt(b)_{i+1}) + \upe_i(b)$, where $\upe_i, \downf_i : \B \rightarrow \mathbb{Z}$ are
    \begin{eqnarray}
      \upe_i(b) & = & \max\{k \in \mathbb{Z}_{\geq 0} \mid e_i^k(b) \neq 0 \}, \\
      \downf_i(b) & = & \max\{k \in \mathbb{Z}_{\geq 0} \mid f_i^k(b) \neq 0 \}.
    \end{eqnarray}
  \end{enumerate}
  We call $\upe_i(b), \downf_i(b)$ the \emph{string lengths} through $b$, with $\upe_i(b)$ the $i$-tail and $\downf_i(b)$ the $i$-head. 
  \label{def:base-A}
\end{definition}

Note that it is enough to define the $f_i$'s and the weight map, and, abusing notation, we denote the crystal $(\B,\{e_i,f_i\}_{1\leq i \leq r},\wt)$ simply by $\B$.

As a first example, the \emph{standard crystal} $\B(n)$, for $n \in \mathbb{Z}_{>0}$, has basis $\left\{\raisebox{-0.3\cellsize}{$\tableau{i}$} \mid i=1,\ldots,n\right\}$, crystal operators $f_j$ that act by incrementing the entry if $i=j$, and taking it to $0$ otherwise. The weight map is $\wt\left(\,\raisebox{-0.3\cellsize}{$\tableau{i}$}\,\right) = \e_i$.

\begin{definition}
  The \emph{character} of a crystal $\B$ is the polynomial
  \begin{equation}
    \mathrm{ch}(\B) = \sum_{b \in B} x_1^{\wt(b)_1} x_2^{\wt(b)_2} \cdots x_{r+1}^{\wt(b)_{r+1}}.
    \label{e:char}
  \end{equation}
  \label{def:character}
\end{definition}

For example, the character of the standard crystal $\B(n)$ is $\mathrm{ch}(\B(n)) = x_1 + x_2 + \cdots + x_n$. Condition (1) of Definition~\ref{def:base-A} ensures that the characters of connected crystals are homogeneous of a given degree.

A \emph{crystal graph} is a directed, colored graph with vertex set given by the crystal basis $\B$ and directed edges given by  the crystal operators $e_i$ and $f_i$, where if $b^{\prime}=e_i(b)$ (resp. $b^{\prime\prime} = f_i (b)$), then we write $b {\blue {\blue \ial}} b^{\prime}$ (resp. $b {\blue \iar} b^{\prime\prime}$) and all edges to $0$ are omitted. The {\em $i$-string through $b$} is the maximal path
$$
e_i^{\upe_i} x {\blue \iar} \cdots {\blue \iar} e_i x {\blue \iar} x {\blue \iar} f_i x {\blue \iar} \cdots {\blue \iar} f_i^{\downf_i} x .
$$
For example, the standard crystal graph $\B(n)$ is shown in Figure~\ref{fig:standard}. 

\begin{figure}[ht]
  \begin{displaymath}
    \begin{array}{c@{\hskip 2\cellsize}c@{\hskip 2\cellsize}c@{\hskip 2\cellsize}c@{\hskip 2\cellsize}c}
      \rnode{a1}{\tableau{1}} & \rnode{a2}{\tableau{2}} & \rnode{a3}{\tableau{3}} & \rnode{c}{\raisebox{0.5\cellsize}{$\cdots$}} & \rnode{a4}{\tableau{n}} 
    \end{array}
    \psset{nodesep=2pt,linewidth=.2ex}
    \ncline[linecolor=red]{->} {a1}{a2} \naput{1}
    \ncline[linecolor=blue]{->}  {a2}{a3} \naput{2}
    \ncline[linecolor=purple]{->}  {a3}{c} \naput{3}
    \ncline[linecolor=orange]{->}  {c}{a4} \naput{n\!-\!1}
  \end{displaymath}
  \caption{\label{fig:standard}The standard crystal $\B(n)$ for $U_{q}(\mathfrak{gl}(n))$.}
\end{figure}

\begin{definition}
  Given two crystals $\B_1$ and $\B_2$, the \emph{tensor product} $\B_1 \otimes \B_2$ is the set $\B_1 \otimes \B_2$ together with crystal operators $e_i, f_i$ defined on the tensor product $\B_1 \otimes \B_2$ by
  \begin{equation}
    f_i(b_1 \otimes b_2) = \left\{ \begin{array}{rl}
      f_i(b_1) \otimes b_2 & \mbox{if } \upe_i(b_2) < \downf_i(b_1), \\
      b_1 \otimes f_i(b_2) & \mbox{if } \upe_i(b_2) \geq \downf_i(b_1),
    \end{array} \right.
  \end{equation}
  and weight function $\wt(b_1 \otimes b_2) = \wt(b_1) + \wt(b_2)$, where addition is taken coordinate-wise.
\label{def:tensor-A}
\end{definition}

\begin{figure}[ht]
  \begin{displaymath}
    \begin{array}{c@{\hskip 2\cellsize}l@{\hskip 2\cellsize}l@{\hskip 2\cellsize}l}
                              & \rnode{a1}{\tableau{1}}               & \rnode{a2}{\tableau{2}}               & \rnode{a3}{\tableau{3}} \\[2\cellsize]
      \rnode{b1}{\tableau{1}} & \rnode{c11}{\tableau{1}\otimes\tableau{1}} & \rnode{c21}{\tableau{2}\otimes\tableau{1}} & \rnode{c31}{\tableau{3}\otimes\tableau{1}} \\[2\cellsize]
      \rnode{b2}{\tableau{2}} & \rnode{c12}{\tableau{1}\otimes\tableau{2}} & \rnode{c22}{\tableau{2}\otimes\tableau{2}} & \rnode{c32}{\tableau{3}\otimes\tableau{2}} \\[2\cellsize]
      \rnode{b3}{\tableau{3}} & \rnode{c13}{\tableau{1}\otimes\tableau{3}} & \rnode{c23}{\tableau{2}\otimes\tableau{3}} & \rnode{c33}{\tableau{3}\otimes\tableau{3}}
    \end{array}
    \psset{nodesep=2pt,linewidth=.2ex}
    \ncline[linewidth=.2ex,linecolor=red]{->} {a1}{a2} \naput{1}
    \ncline[linecolor=blue]{->}  {a2}{a3} \naput{2}
    \ncline[linewidth=.2ex,linecolor=red]{->} {c11}{c21} \naput{1}
    \ncline[linecolor=blue]{->}  {c21}{c31} \naput{2}
    \ncline[linewidth=.2ex,linecolor=red]{->} {b1}{b2} \nbput{1}
    \ncline[linewidth=.2ex,linecolor=red]{->} {c21}{c22} \nbput{1}
    \ncline[linewidth=.2ex,linecolor=red]{->} {c31}{c32} \nbput{1}
    \ncline[linecolor=blue]{->}  {c22}{c32} \naput{2}
    \ncline[linecolor=blue]{->}  {b2}{b3} \nbput{2}
    \ncline[linecolor=blue]{->}  {c12}{c13} \nbput{2}
    \ncline[linecolor=blue]{->}  {c32}{c33} \nbput{2}
    \ncline[linewidth=.2ex,linecolor=red]{->} {c13}{c23} \naput{1}
  \end{displaymath}
  \caption{\label{fig:tensor}The tensor product of two standard crystals $\B(3)$ for $U_{q}(\mathfrak{gl}(3))$.}
\end{figure}
We use the combinatorial structure of a crystals to encode essential information for studying the corresponding representations of the quantum group, with the crystal operators corresponding to deformations of the Chevalley generators. To do so, we must restrict our attention to \emph{normal crystals}. A direct definition of normal crystals is quite involved, but the following lemma gives a complete characterization.
\begin{proposition}
  Given two normal crystals $\B_1$ and $\B_2$, the tensor product $\B_1 \otimes \B_2$ is a normal crystal. Further, every connected normal crystal of dimension $r+1$ and degree $k$ arises as a connected component in $\B(r+1)^{\otimes k}$, the $k$-fold tensor product of the standard crystal $\B(r+1)$.
\label{prop:normal-A}
\end{proposition}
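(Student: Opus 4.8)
The plan is to work from the representation-theoretic definition of a normal crystal that the preceding paragraph alludes to—namely, that a crystal is \emph{normal} if it is isomorphic to the crystal basis of a finite-dimensional representation of $U_q(\mathfrak{gl}(r+1))$, equivalently a direct sum of integrable highest weight modules—and to deduce the stated combinatorial characterization from Kashiwara's structure theory, as developed in \cite{Kas90,Kas91} and surveyed in \cite{BS17}. The proposition then splits into a closure statement (the tensor product of normal crystals is normal) and an embedding statement (every connected normal crystal of degree $k$ is a component of $\B(r+1)^{\otimes k}$), and I would treat these in turn.

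For the closure statement, the plan is to invoke Kashiwara's tensor product theorem, which asserts that the crystal basis of a tensor product $V_1 \otimes V_2$ of representations is the tensor product of the crystal bases of $V_1$ and $V_2$, with crystal operators given precisely by the rule of Definition~\ref{def:tensor-A}. Writing $\B_1$ and $\B_2$ as the crystals of representations $V_1$ and $V_2$, the set $\B_1 \otimes \B_2$ equipped with the operators of Definition~\ref{def:tensor-A} is then the crystal of $V_1 \otimes V_2$, which is again finite-dimensional; hence $\B_1 \otimes \B_2$ is normal. The only point requiring care is that the combinatorial tensor rule stated here agrees with Kashiwara's up to conventions for the order of the factors.

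For the embedding statement, let $\B$ be a connected normal crystal of dimension $r+1$ and degree $k$. First I would establish that $\B$ has a unique \emph{highest weight element} $u$, one annihilated by every $e_i$: existence follows because weights are bounded above on the finite set $\B$ and connectedness lets one raise any element to such an element, while uniqueness is exactly the content of Kashiwara's classification of connected normal crystals by their highest weight. Condition (2) of Definition~\ref{def:base-A} forces $\wt(u) = \lambda$ to be dominant, and since the character of a connected crystal is homogeneous of degree $k$, the weight $\lambda$ is a partition of $k$ into at most $r+1$ parts. As the isomorphism class of $\B$ is determined by $\lambda$, it suffices to locate a copy of the connected highest weight crystal $B(\lambda)$ inside $\B(r+1)^{\otimes k}$. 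This I would extract from Schur--Weyl duality: the irreducible $V(\lambda)$ occurs in $(\mathbb{C}^{r+1})^{\otimes k}$ with multiplicity equal to the number of standard Young tableaux of shape $\lambda$, which is positive whenever $\lambda \vdash k$ with at most $r+1$ parts. Applying the closure statement iteratively identifies the crystal of $(\mathbb{C}^{r+1})^{\otimes k}$ with $\B(r+1)^{\otimes k}$, so $B(\lambda)$ appears as a connected component, and connectedness identifies it with $\B$.

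The hard part is the uniqueness of the highest weight element and, beneath it, the classification of connected normal crystals by highest weight; this is the deep input from Kashiwara's theory, and it is precisely what makes normality a genuine restriction beyond the bare axioms of Definition~\ref{def:base-A}. If instead a self-contained combinatorial argument is preferred, one would defer this proposition until the explicit tableaux crystal and Stembridge's local characterization \cite{Ste03} are in hand: one realizes $B(\lambda)$ concretely on semistandard Young tableaux of shape $\lambda$, embeds it into $\B(r+1)^{\otimes k}$ via a reading word, and then verifies directly that the tensor rule of Definition~\ref{def:tensor-A} reproduces the tableaux operators and that the highest weight tableau is unique in its component. That reduction is exactly the template the later sections adapt to the shifted setting.
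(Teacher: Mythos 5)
The paper itself offers no proof of this proposition: it is presented as a background characterization, with the remark that ``a direct definition of normal crystals is quite involved,'' and the result is implicitly imported from Kashiwara's theory. So there is nothing internal to compare your argument against; what you have written is the standard representation-theoretic derivation, and it is essentially sound. Your closure step correctly reduces to Kashiwara's tensor product theorem for crystal bases (the convention issue you flag about the order of factors is real but harmless), and your embedding step correctly combines the classification of connected normal crystals by highest weight with Schur--Weyl duality to locate $\B(\lambda)$ inside $\B(r+1)^{\otimes k}$. Two small caveats. First, your claim that condition (2) of Definition~\ref{def:base-A} forces $\wt(u)$ to be dominant only yields $\wt(u)_i \geq \wt(u)_{i+1}$ (since $\downf_i(u) \geq 0$ and $\upe_i(u) = 0$); the nonnegativity of the last coordinate, required by the paper's $\Lambda^{+}$, comes from normality (polynomial representations, or equivalently membership in a tensor power of the standard crystal), not from the axioms alone. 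Second, be aware that some treatments \emph{define} a normal crystal to be a disjoint union of connected components of tensor powers of $\B(r+1)$, in which case the second half of the proposition is a definition rather than a theorem and only the closure statement carries content; since the paper deliberately leaves the definition of normality unstated, your proof is best read as showing the equivalence of that combinatorial definition with the representation-theoretic one. Your closing suggestion---deferring the proposition until the tableaux model and Stembridge's axioms are available and arguing combinatorially---is indeed the self-contained alternative, and it is the template the paper follows in the shifted setting.
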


 Connected normal crystals are in one-to-one correspondence with dominant weights, which in turn index irreducible representations. Given a dominant weight $\lambda \in \Lambda^{+}$, let $\B(\lambda)$ denote the connected normal crystal with highest weight $\lambda$. Then $\mathrm{ch}(\B(\lambda))$ is precisely the character of the irreducible representation indexed by $\lambda$, which corresponds to the Schur polynomial $s_{\lambda}(x_1,\ldots,x_{r+1})$ defined in \S\ref{sec:tableaux-A}. Even more compelling is the remarkable fact that the following combinatorial procedure on crystals corresponds to the tensor product of the corresponding representations. For example, Figure~\ref{fig:tensor} computes that the tensor product of two copies of the standard crystal $\B(3)=\B((1,0,0))$ is given by $\B((2,0,0))$ and $\B((1,1,0))$.

\begin{definition}
  An element $b \in \B$ of a crystal is a \emph{highest weight element} if $e_i(b) = 0$ for all $i=1,2,\ldots,r$. 
  \label{def:hw-A}
\end{definition}

For example, the highest weight element of $\B(n)$ is $\raisebox{-0.3\cellsize}{$\tableau{1}$}$, which has weight $\e_1 \in \Lambda^{+}$. Highest weights give an efficient way to categorize normal crystals.

\begin{proposition}
  A connected, normal crystal $\B$ has a unique highest weight $b$, and we call $\wt(b)\in\Lambda^{+}$ the \emph{highest weight of $\B$}. Moreover, two connected normal crystals are isomorphic as colored directed graphs if and only if they have the same highest weight.
\end{proposition}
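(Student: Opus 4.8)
The plan is to separate the statement into three parts---existence of a highest weight element, dominance of its weight, and the uniqueness-plus-classification---handling the first two directly from the axioms and reserving the real work for the third. Throughout I use Proposition~\ref{prop:normal-A} to realize the connected normal crystal $\B$ of degree $k$ as a connected component of the finite crystal $\B(r+1)^{\otimes k}$; in particular $\B$ is a finite set.

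For existence, I would introduce the height functional $\mathrm{ht}(b)=\sum_{i=1}^{r+1}(r+1-i)\,\wt(b)_i$. Condition (1) of Definition~\ref{def:base-A} gives $\wt(e_i(b))=\wt(b)+\alpha_i$, and since the increment of $\mathrm{ht}$ along the simple root $\alpha_i=\e_i-\e_{i+1}$ is $(r+1-i)-(r-i)=1>0$, any $e_i$-edge strictly raises the height. As $\B$ is finite, an element $b$ of maximal height exists, and it must satisfy $e_i(b)=0$ for every $i$; that is, $b$ is a highest weight element. For dominance, fix a highest weight element $b$, so that $\upe_i(b)=0$ for all $i$. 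Condition (2) of Definition~\ref{def:base-A} then forces $\downf_i(b)=\wt(b)_i-\wt(b)_{i+1}\ge 0$, so $\wt(b)_1\ge\cdots\ge\wt(b)_{r+1}$. Since every vertex of $\B(r+1)^{\otimes k}$ has weight a sum of $k$ standard basis vectors, all coordinates of $\wt(b)$ are non-negative, and therefore $\wt(b)\in\Lambda^{+}$.

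The \emph{only if} half of the isomorphism statement is then immediate: a crystal isomorphism commutes with the $e_i$ and preserves weights, hence sends the highest weight element of one crystal to that of the other, so their highest weights coincide. The substantive claims are the uniqueness of the highest weight element and the converse (\emph{equal highest weights imply isomorphic}), which I would prove together by showing that the connected component of a highest weight vector $b$ in $\B(r+1)^{\otimes k}$ is, up to isomorphism, determined by $\wt(b)$ and contains no second highest weight vector. The natural approach is induction on the degree via the factorization $\B(r+1)^{\otimes k}=\B(r+1)^{\otimes(k-1)}\otimes\B(r+1)$, using the tensor rule of Definition~\ref{def:tensor-A} to compute how the highest weight vectors and connected components of the $k$-fold power are built from those of the $(k-1)$-fold power, a crystal analogue of the Pieri rule.

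I expect this last step to be the main obstacle: one must control the interaction between the tensor rule and the $e_i$-operators precisely enough to guarantee that no connected component ever picks up a second source, and the bookkeeping is delicate. A cleaner route, available once \S\ref{sec:tableaux-A} is in hand, is to transport the problem to the explicit model $\B(\lambda)\cong\SSYT(\lambda)$, where the unique Yamanouchi (superstandard) tableau makes both uniqueness and the classification transparent; I would fall back on this identification if the inductive bookkeeping proves unwieldy.
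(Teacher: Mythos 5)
The paper states this proposition without proof, as standard background from the crystal literature, so there is no in-paper argument to match yours against. Your treatment of existence (the height functional $\mathrm{ht}(b)=\sum_i(r+1-i)\wt(b)_i$ increases by $1$ along every $e_i$-edge, and $\B$ is finite by Proposition~\ref{prop:normal-A}) and of dominance (from $\upe_i(b)=0$ and condition (2) of Definition~\ref{def:base-A} one gets $\downf_i(b)=\wt(b)_i-\wt(b)_{i+1}\geq 0$) is correct and complete.

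The genuine gap is exactly where you flag it: uniqueness of the highest weight element and the classification by highest weight are never actually proved. The inductive route through $\B(r+1)^{\otimes(k-1)}\otimes\B(r+1)$ is only announced, and the difficulty you anticipate is real --- the tensor rule readily produces the highest weight vectors of the product, but showing that no connected component acquires a second source requires a nontrivial input (in effect a combinatorial $R$-matrix or plactic/RSK argument, or Stembridge's local axioms applied to the components). Your proposed fallback to $\B(\lambda)\cong\SSYT_n(\lambda)$ is circular in the logical order of the paper: the notation $\B(\lambda)$ for ``the connected normal crystal with highest weight $\lambda$'' and the isomorphism asserted in Theorem~\ref{thm:young-crystal} both presuppose the classification you are trying to prove, and knowing that $\SSYT_n(\lambda)$ has a unique Yamanouchi tableau says nothing about an arbitrary connected component of $\B(r+1)^{\otimes k}$ until you have identified that component with some $\SSYT_n(\lambda)$ --- which is again the content of the proposition. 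One further caution: the proposition asserts isomorphism \emph{as colored directed graphs}, and your ``only if'' argument silently upgrades this to a weight-preserving crystal isomorphism. As literally stated the graph-isomorphism version requires care (e.g., for $\mathfrak{gl}(3)$ the crystals with highest weights $(1,0,0)$ and $(2,1,1)$ have isomorphic underlying graphs), so either the degree must be fixed or the weight map recovered from the string lengths via condition (2); this is worth a sentence if you intend the statement to be taken at face value.
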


A consequence of this proposition is that normal crystals are determined completely by the underlying graph structure, and we do not need to be concerned with the weight maps.
Even so, it is not a straight-forward process to understand which crystals are normal, but as we shall see, there is a local characterization of that will prove quite useful. Before presenting this, we give an explicit combinatorial realization of normal crystals using tableaux.

\subsection{Crystals on Young tableaux}
\label{sec:tableaux-A}

A \emph{partition} $\lambda$ is a weakly decreasing sequence of positive integers, $\lambda = (\lambda_1,\lambda_2, \ldots, \lambda_{\ell})$, where $\lambda_1 \geq \lambda_2 \geq \cdots \geq \lambda_{\ell} > 0$. By extending $\lambda$ with trailing $0$s until it has length $r+1$, we may identify partitions with dominant weights $\Lambda^{+}$. The \emph{size} of a partition is the sum of its parts, i.e. $\lambda_1  +  \lambda_2 + \cdots + \lambda_{\ell}$. We identify a partition $\lambda$ with its Young diagram drawn in French (coordinate) notation, i.e. the collection of left-justified cells with $\lambda_i$ cells in row $i$ indexed from the bottom. 

A \emph{semistandard Young tableau} of shape $\lambda$ is a filling of the shifted Young diagram for $\lambda$ with positive integers such that entries weakly increase along rows and columns and each column has at most one entry $i$ for each $i$. For example, see Figure~\ref{fig:S31}. 

\begin{figure}[ht]
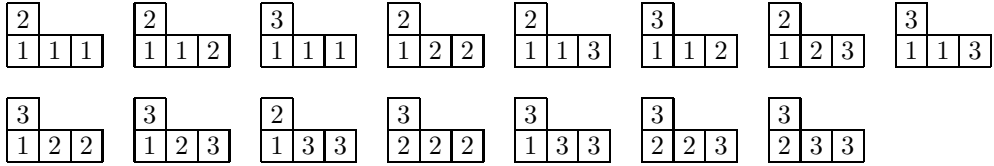

  \begin{center}
    \begin{displaymath}
      \begin{array}{c@{\hskip\cellsize}c@{\hskip\cellsize}c@{\hskip\cellsize}c@{\hskip\cellsize}c@{\hskip\cellsize}c@{\hskip\cellsize}c@{\hskip\cellsize}c}
        \tableau{2 \\ 1 & 1 & 1} & \tableau{2 \\ 1 & 1 & 2} & \tableau{3 \\ 1 & 1 & 1} & \tableau{2 \\ 1 & 2 & 2} & 
        \tableau{2 \\ 1 & 1 & 3} & \tableau{3 \\ 1 & 1 & 2} & \tableau{2 \\ 1 & 2 & 3} & \tableau{3 \\ 1 & 1 & 3} \\ \\
        \tableau{3 \\ 1 & 2 & 2} & \tableau{3 \\ 1 & 2 & 3} & \tableau{2 \\ 1 & 3 & 3} & \tableau{3 \\ 2 & 2 & 2} & 
        \tableau{3 \\ 1 & 3 & 3} & \tableau{3 \\ 2 & 2 & 3} & \tableau{3 \\ 2 & 3 & 3} & 
      \end{array}
    \end{displaymath}
    \caption{\label{fig:S31}The semistandard Young tableaux of shape $(3,1)$ with entries $\{1,2,3\}$.}
  \end{center}
\end{figure}

\begin{definition}
  The \emph{Schur polynomial} indexed by the partition $\lambda$ is given by
  \begin{equation}
    s_{\lambda} (x_1,\ldots,x_n) = \sum_{T \in \SSYT_n(\lambda)} x_1^{\wt(T)_1} \cdots x_n^{\wt(T)_n},
    \label{e:schur}
  \end{equation}
  where $\SSYT_n(\lambda)$ denotes the set of all semistandard Young tableaux of shape $\lambda$ with largest entry at most $n$, and $\wt(T)$ is the composition whose $i$th part is the multiplicity with which $i$ occurs in $T$. 
  \label{def:schur}
\end{definition}
  
For example, from Figure~\ref{fig:S31}, we compute
\begin{eqnarray*}
  s_{(3,1)}(x_1,x_2,x_3) & = & x_1^3 x_2 + x_1^3 x_3 + x_1^2 x_2^2 + 2 x_1^2 x_2 x_3 + x_1^2 x_3^2 + x_1 x_2^3 + 2 x_1 x_2^2 x_3 \\
  & & + 2 x_1 x_2 x_3^2 + x_1 x_3^3 + x_2^3 x_3 + x_2^2 x_3^2 + x_2 x_3^3.
\end{eqnarray*}

Schur polynomials are the irreducible characters for polynomial representations of the general linear group. Moreover, semistandard Young tableaux naturally index a basis for the irreducible representations. Therefore it is natural to seek a crystal structure with semistandard Young tableaux as the underlying basis. This was done by Kashiwara and Nakashima \cite{KN94} and Littelman \cite{Lit95}, though our presentation is again simplified to the case of the general linear group.

For a word $w$ of length $k$, a positive integer $r \leqslant k$, and a positive integer $i$, define
\begin{equation}
  m_i(w,r) = \wt(w_{1} w_{2} \cdots w_{r})_{i} - \wt(w_{1} w_{2} \cdots w_{r})_{i+1},
  \label{e:max}
\end{equation}
where $\wt(w)$ is the weak composition whose $j$th part is the number of $j$'s in $w$. Set $m_i(w) = \max_r(m_i(w,r),0)$.  Observe that if $m_i(w) > 0$ and $w_p$ is the leftmost occurrence of this maximum, then $w_p = i$, and if $q$ is the rightmost occurrence of this maximum, then either $q=k$ or $w_{q+1} = i+1$.

For $T$ a Young tableau, the \emph{row reading word of $T$}, denoted by $w(T)$, is the word obtained by reading the entries of $T$ left to right along rows, from the top row down. 

\begin{definition}
  Given a positive integer $i$, define \emph{lowering operators}, denoted by $f_i$, on semistandard Young tableaux as follows: if $m_i(w(T)) = 0$, then $f_i(T)=0$; otherwise, let $p$ be the smallest index such that $m_i(w(T),p) = m_i(w(T))$, and set $f_i(T)$ to change the entry in $T$ corresponding to $w_p$ to $i+1$.
  \label{def:young-lower}
\end{definition}

Another way to state Definition~\ref{def:young-lower} is in terms of blocked and free entries in the row reading word of $T$. For this, we say that a pair of letters $w_a, w_b$ are \emph{$i$-paired} if $w_a = i+1$, $w_b=i$, and $a<b$ such that for any $a<c<b$, either $w_c \neq i, i+1$ or $w_c$ is $i$-paired with some $w_d$ between $w_a$ and $w_b$. The entries that are $i$-paired are called $i$-blocked, and any letter $i$ or $i+1$ that is not $i$-blocked is called \emph{$i$-free}. With this terminology, $f_i$ changes the rightmost $i$-free entry $i$ to become $i+1$.

For example, Figure~\ref{fig:raise-A} shows the lowering operators applied to the left and middle tableaux, where the changed cells are indicated with a circle. This is, in fact, the complete $4$-string through these tableaux.

\begin{figure}[ht]
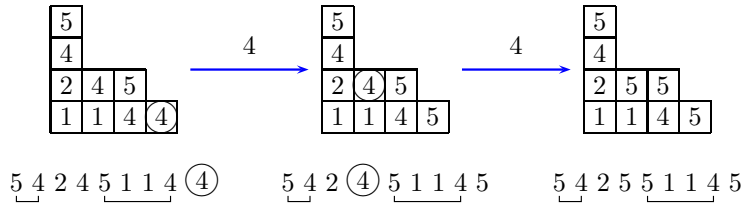

  \begin{center}
    \begin{displaymath}
      \begin{array}{c@{\hskip 2\cellsize}c@{\hskip 2\cellsize}c}
        \rnode{A}{\tableau{5 \\ 4 \\ 2 & 4 & 5 \\ 1 & 1 & 4 & \cir{4}}} &
        \rnode{B}{\tableau{5 \\ 4 \\ 2 & \cir{4} & 5 \\ 1 & 1 & 4 & 5}} &
        \rnode{C}{\tableau{5 \\ 4 \\ 2 & 5 & 5 \\ 1 & 1 & 4 & 5}} \\ \\
        \rnode{a1}{5} \ \rnode{a2}{4} \ 2 \ 4 \ \rnode{a3}{5} \ 1 \ 1 \ \rnode{a4}{4} \ \raisebox{-.5ex}{$\cir{4}$} &
        \rnode{b1}{5} \ \rnode{b2}{4} \ 2 \ \raisebox{-.5ex}{$\cir{4}$} \ \rnode{b3}{5} \ 1 \ 1 \ \rnode{b4}{4} \ 5 &
        \rnode{c1}{5} \ \rnode{c2}{4} \ 2 \ 5 \ \rnode{c3}{5} \ 1 \ 1 \ \rnode{c4}{4} \ 5
      \end{array}
      \psset{nodesep=5pt,linewidth=.2ex}
      \ncline[linecolor=blue]{->} {A}{B} \naput{4}
      \ncline[linecolor=blue]{->} {B}{C} \naput{4}
      \ncdiag[linewidth=.1ex,nodesep=2pt,angleA=-90,angleB=-90,arm=0.5ex] {a1}{a2} 
      \ncdiag[linewidth=.1ex,nodesep=2pt,angleA=-90,angleB=-90,arm=0.5ex] {a3}{a4} 
      \ncdiag[linewidth=.1ex,nodesep=2pt,angleA=-90,angleB=-90,arm=0.5ex] {b1}{b2} 
      \ncdiag[linewidth=.1ex,nodesep=2pt,angleA=-90,angleB=-90,arm=0.5ex] {b3}{b4} 
      \ncdiag[linewidth=.1ex,nodesep=2pt,angleA=-90,angleB=-90,arm=0.5ex] {c1}{c2} 
      \ncdiag[linewidth=.1ex,nodesep=2pt,angleA=-90,angleB=-90,arm=0.5ex] {c3}{c4} 
    \end{displaymath}
    \caption{\label{fig:raise-A}A complete $4$-string on semistandard Young tableaux of shape $(4,3,1,1)$, with the row reading word indicated below and $4$-blocked pairs bracketed.}
  \end{center}
\end{figure}

\begin{definition}
  Given a positive integer $i$, define \emph{raising operators}, denoted by $e_i$, on semistandard Young tableaux as follows: let $q$ be the largest index such that $m_i(w(T),q) = max_r(m_i(w(T),r))$. If $q=k$, then $e_i(T)=0$; otherwise, set $e_i(T)$ to change the entry in $T$ corresponding to $w_{q+1}$ to $i$.
  \label{def:young-raise}
\end{definition}

Note that $m_i(T)$ is precisely the string length $\downf_i(T)$, which also coincides with the number of $i$-free entries equal to $i$. The string length $\upe_i(T)$ is given by
\begin{equation}
  \max\{ r \mid \wt(w_{r} w_{r+1} \cdots w_{n})_{i+1} - \wt(w_{r} w_{r+1} \cdots w_{n})_{i} \},
\label{e:lmax}
\end{equation}
which also coincides with the number of $i$-free entries equal to $i+1$. Moreover, the largest index to attain the maximum in \eqref{e:lmax}, if positive, is the entry on which $e_i$ acts, which is the leftmost $i$-free entry $i+1$. 

\begin{theorem}[\cite{KN94,Lit95}]
  The raising and lowering operators $e_i, f_i$ for $i=1,\ldots,n-1$ are well-defined maps from $\SSYT_n(\lambda)$ to $\SSYT_n(\lambda) \cup \{0\}$ that determine a normal crystal $(\SSYT_n(\lambda),\{e_i,f_i\}_{1 \leq i < n},\wt)$ that is isomorphic to $\B(\lambda)$.
  \label{thm:young-crystal}
\end{theorem}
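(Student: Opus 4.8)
The plan is to avoid verifying the axioms of Definition~\ref{def:base-A} by hand and instead realize $\SSYT_n(\lambda)$ as a union of connected components inside a tensor power of the standard crystal, so that normality is inherited from Proposition~\ref{prop:normal-A}. Writing $k$ for the size of $\lambda$ and $w(T)=w_1 w_2\cdots w_k$ for the row reading word, I would embed
\[
  \iota(T)=\raisebox{-0.3\cellsize}{$\tableau{w_1}$}\otimes\raisebox{-0.3\cellsize}{$\tableau{w_2}$}\otimes\cdots\otimes\raisebox{-0.3\cellsize}{$\tableau{w_k}$}\in\B(n)^{\otimes k}.
\]
A tableau of fixed shape is recovered from its reading word, so $\iota$ is injective, and it is visibly weight preserving.

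First I would show that the operators of Definitions~\ref{def:young-lower} and \ref{def:young-raise} are exactly the tensor operators of Definition~\ref{def:tensor-A} transported through $\iota$; this is the \emph{signature rule}. A box $\raisebox{-0.3\cellsize}{$\tableau{a}$}$ in the standard crystal satisfies $(\upe_i,\downf_i)=(1,0)$ when $a=i+1$, $(\upe_i,\downf_i)=(0,1)$ when $a=i$, and $(\upe_i,\downf_i)=(0,0)$ otherwise. Unwinding the recursion of Definition~\ref{def:tensor-A} across the $k$ factors, the comparison $\upe_i(b_2)\geq\downf_i(b_1)$ made at each factor is precisely the cancellation of an adjacent pair consisting of an $i+1$ to the left of an $i$; the letters surviving all cancellations are the $i$-free entries, and the factor acted on by $f_i$ (resp.\ $e_i$) is the rightmost $i$-free $i$ (resp.\ leftmost $i$-free $i+1$). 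This matches Definitions~\ref{def:young-lower} and \ref{def:young-raise} verbatim and follows by a short induction on $k$.

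The principal obstacle is \emph{closure}: that $f_i(T)$ and $e_i(T)$ are $0$ or again lie in $\SSYT_n(\lambda)$. Only one entry changes, so the shape is preserved and it suffices to check semistandardness. For $f_i$ the altered cell $c$ contains an $i$ (by the observation following \eqref{e:max}) and becomes $i+1$, so semistandardness can fail only if the cell immediately to the right of $c$ contains $i$ or the cell immediately above $c$ contains $i+1$. The first is immediate: a right neighbor equal to $i$ is read directly after $c$, so if $c$ is $i$-free then so is that neighbor, contradicting that $c$ is the \emph{rightmost} $i$-free $i$. The second rests on the crux lemma that, in any semistandard tableau, an $i$ lying directly below an $i+1$ is always $i$-paired; granting it, the cell above $c$ cannot be $i+1$. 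The argument for $e_i$ is symmetric. Establishing this column-compatibility of the $i$-pairing is a cancellation argument on the reading word that genuinely uses column strictness, and it is the technical heart of the theorem.

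With closure in hand, $\iota(\SSYT_n(\lambda))$ is stable under every $e_i$ and $f_i$, hence a union of connected components of the normal crystal $\B(n)^{\otimes k}$; each such component is a connected normal crystal with a unique highest weight element. It remains to count highest weight elements in the image. Now $e_i(T)=0$ for all $i$ exactly when the reading word has no $i$-free $i+1$, i.e.\ every $i+1$ is $i$-paired with a later $i$. The rightmost letter of $w(T)$, which is the last cell of the bottom row, can then only be $1$; since that row weakly increases it consists entirely of $1$'s, and removing it and inducting forces row $j$ to consist of $j$'s. Thus there is a unique highest weight element, namely the tableau of weight $\lambda$, so $\iota(\SSYT_n(\lambda))$ is a single connected component, which is normal and has highest weight $\lambda$, hence is isomorphic to $\B(\lambda)$.
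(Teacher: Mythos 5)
The paper itself gives no proof of this theorem --- it is quoted from Kashiwara--Nakashima and Littelmann --- so there is nothing internal to compare against. Your overall strategy (embed $\SSYT_n(\lambda)$ into $\B(n)^{\otimes k}$ via the reading word, match the induced operators with Definitions~\ref{def:young-lower} and~\ref{def:young-raise} by the signature rule, prove closure, and count highest weight elements) is the standard and essentially correct route, and it buys normality for free from Proposition~\ref{prop:normal-A}.

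There is, however, a step that fails as written: the direction of your embedding is incompatible with the paper's tensor convention. Definition~\ref{def:tensor-A} sends $f_i$ to the \emph{left} factor when $\downf_i(b_1) > \upe_i(b_2)$; unwinding this over $\iota(T)=\raisebox{-0.3\cellsize}{$\tableau{w_1}$}\otimes\cdots\otimes\raisebox{-0.3\cellsize}{$\tableau{w_k}$}$, the cancellation is of an $i$ lying to the \emph{left} of an $i+1$, and $f_i$ then acts on the \emph{leftmost} surviving $i$ --- the mirror image of Definition~\ref{def:young-lower}, which pairs an $i+1$ to the left of an $i$ and acts on the rightmost free $i$. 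Concretely, take $T$ of shape $(3,1)$ with $w(T)=2111$: one computes $\downf_1\bigl(\raisebox{-0.3\cellsize}{$\tableau{2}$}\otimes\raisebox{-0.3\cellsize}{$\tableau{1}$}\bigr)=1$, so the tensor rule pushes $f_1$ leftward until it hits the second factor, producing the word $2211$, which is not even the reading word of a semistandard tableau of that shape; Definition~\ref{def:young-lower} instead changes the last letter, giving $2112$ (as in Figure~\ref{fig:crystal-A}). So the claimed ``verbatim'' match, and hence your induction on $k$, is false for this $\iota$. The fix is one reversal --- embed by $\raisebox{-0.3\cellsize}{$\tableau{w_k}$}\otimes\cdots\otimes\raisebox{-0.3\cellsize}{$\tableau{w_1}$}$ (one checks on Figure~\ref{fig:tensor} that the column tableau of shape $(1,1)$ corresponds to $\raisebox{-0.3\cellsize}{$\tableau{1}$}\otimes\raisebox{-0.3\cellsize}{$\tableau{2}$}$, the reversed reading word) --- after which your signature-rule computation and everything downstream goes through. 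A secondary point: you correctly isolate the ``crux lemma'' that an $i$ lying directly below an $i+1$ is always $i$-paired, but you assert it rather than prove it; since you yourself call it the technical heart of closure, the proof is not complete until that cancellation argument (and its counterpart for $e_i$) is actually written out.
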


For example, Figure~\ref{fig:crystal-A} depicts the crystal structures on semistandard Young tableaux of shapes $(3,1)$, $(2,2)$, and $(2,1,1)$ with entries $\{1,2,3\}$. These crystals are isomorphic to $\B((3,1,0))$, $\B((2,2,0))$, and $\B((2,1,1))$, respectively.

\begin{figure}[ht]
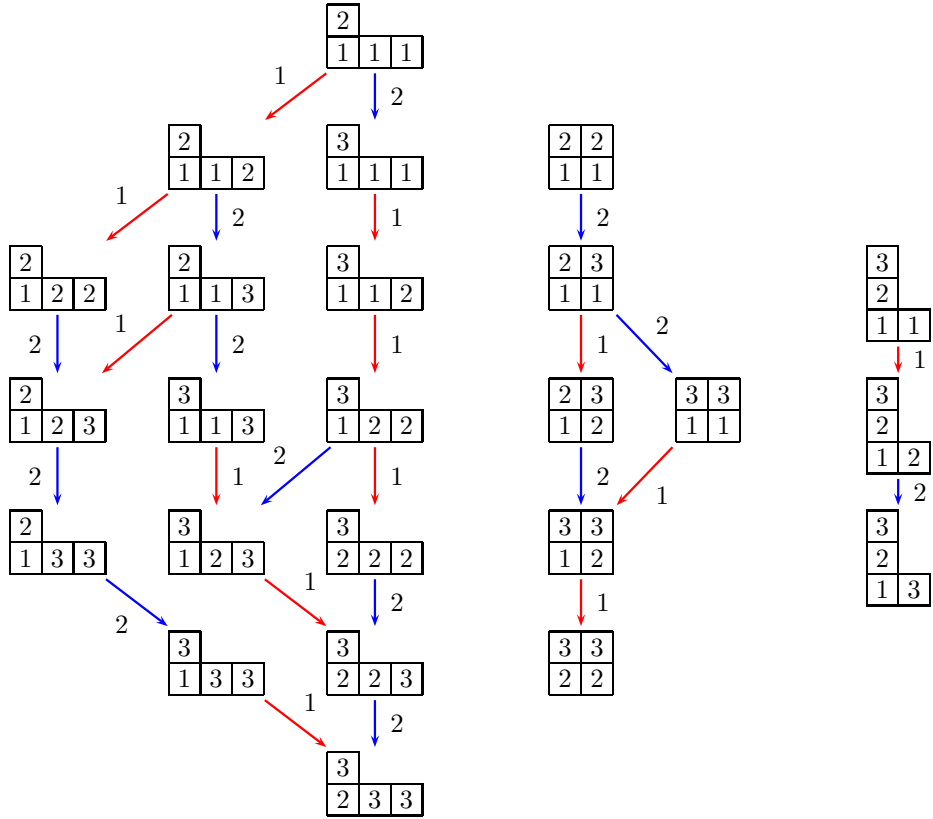

  \begin{center}
    \begin{displaymath}
      \begin{array}{c@{\hskip 2\cellsize}c@{\hskip 2\cellsize}c@{\hskip 4\cellsize}c@{\hskip 2\cellsize}c@{\hskip 4\cellsize}c}
        & & \rnode{c1}{\tableau{2 \\ 1 & 1 & 1}} & & & \\[7ex]
        & \rnode{b2}{\tableau{2 \\ 1 & 1 & 2}} & \rnode{c2}{\tableau{3 \\ 1 & 1 & 1}} & \rnode{d2}{\tableau{2 & 2 \\ 1 & 1}} & & \\[7ex]
        \rnode{a3}{\tableau{2 \\ 1 & 2 & 2}} & \rnode{b3}{\tableau{2 \\ 1 & 1 & 3}} & \rnode{c3}{\tableau{3 \\ 1 & 1 & 2}} & \rnode{d3}{\tableau{2 & 3 \\ 1 & 1}} & & \rnode{f3}{\tableau{3 \\ 2 \\ 1 & 1 }} \\[8ex]
        \rnode{a4}{\tableau{2 \\ 1 & 2 & 3}} & \rnode{b4}{\tableau{3 \\ 1 & 1 & 3}} & \rnode{c4}{\tableau{3 \\ 1 & 2 & 2}} & \rnode{d4}{\tableau{2 & 3 \\ 1 & 2}} & \rnode{e4}{\tableau{3 & 3 \\ 1 & 1}} & \rnode{f4}{\tableau{3 \\ 2 \\ 1 & 2}} \\[8ex]
        \rnode{a5}{\tableau{2 \\ 1 & 3 & 3}} & \rnode{b5}{\tableau{3 \\ 1 & 2 & 3}} & \rnode{c5}{\tableau{3 \\ 2 & 2 & 2}} & \rnode{d5}{\tableau{3 & 3 \\ 1 & 2}} & & \rnode{f5}{\tableau{3 \\ 2 \\ 1 & 3}} \\[7ex]
        & \rnode{b6}{\tableau{3 \\ 1 & 3 & 3}} & \rnode{c6}{\tableau{3 \\ 2 & 2 & 3}} & \rnode{d6}{\tableau{3 & 3 \\ 2 & 2}} & & \\[7ex]
        & & \rnode{c7}{\tableau{3 \\ 2 & 3 & 3}}
      \end{array}
      \psset{nodesep=2pt,linewidth=.2ex}
      \ncline[linewidth=.2ex,linecolor=red]{->} {c1}{b2} \nbput{1}
      \ncline[linecolor=blue]{->}  {c1}{c2} \naput{2}
      \ncline[linewidth=.2ex,linecolor=red]{->}   {b2}{a3} \nbput{1}
      \ncline[linecolor=blue]{->}  {b2}{b3} \naput{2}
      \ncline[linewidth=.2ex,linecolor=red]{->} {c2}{c3} \naput{1}
      \ncline[linecolor=blue]{->}  {d2}{d3} \naput{2}
      \ncline[linecolor=blue]{->}  {a3}{a4} \nbput{2}
      \ncline[linewidth=.2ex,linecolor=red]{->}   {b3}{a4} \nbput{1}
      \ncline[linecolor=blue]{->}  {b3}{b4} \naput{2}
      \ncline[linewidth=.2ex,linecolor=red]{->} {c3}{c4} \naput{1}
      \ncline[linewidth=.2ex,linecolor=red]{->} {d3}{d4} \naput{1}
      \ncline[linecolor=blue]{->}  {d3}{e4} \naput{2}
      \ncline[linewidth=.2ex,linecolor=red]{<-} {f4}{f3} \nbput{1} 
      \ncline[linecolor=blue]{->}  {a4}{a5} \nbput{2}
      \ncline[linewidth=.2ex,linecolor=red]{->} {b4}{b5} \naput{1}
      \ncline[linecolor=blue]{->}  {c4}{b5} \nbput{2}
      \ncline[linewidth=.2ex,linecolor=red]{->} {c4}{c5} \naput{1}
      \ncline[linecolor=blue]{->}  {d4}{d5} \naput{2}
      \ncline[linewidth=.2ex,linecolor=red]{->} {e4}{d5} \naput{1}
      \ncline[linecolor=blue]{->}  {f4}{f5} \naput{2}
      \ncline[linecolor=blue]{->}  {a5}{b6} \nbput{2}
      \ncline[linewidth=.2ex,linecolor=red]{<-}  {c6}{b5} \nbput{1} 
      \ncline[linecolor=blue]{->}  {c5}{c6} \naput{2}
      \ncline[linewidth=.2ex,linecolor=red]{<-} {d6}{d5} \nbput{1}
      \ncline[linewidth=.2ex,linecolor=red]{<-}  {c7}{b6} \nbput{1} 
      \ncline[linecolor=blue]{->}  {c6}{c7} \naput{2}
    \end{displaymath}
    \caption{\label{fig:crystal-A}The crystal structures on semistandard Young tableaux of shapes $(3,1)$, $(2,2)$, $(2,1,1)$ with entries $\{1,2,3\}$, which are isomorphic to $\B((3,1,0))$, $\B((2,2,0))$, $\B((2,1,1))$.}
  \end{center}
\end{figure}

In particular, on the level of polynomials we have
\[ s_{\lambda}(x_1,\ldots,x_n) = \mathrm{ch}(\B(\lambda)). \]
Using crystal theory, this means that whenever we have a normal crystal structure on a set of objects, the objects are in weight-preserving bijection with highest weight crystals $\B(\lambda)$, therefore the generating polynomial is Schur positive. Specifically, we have the following.

\begin{corollary}
  For a normal crystal $\B$, we have
  \begin{equation}
    \mathrm{ch}(\B) = \sum_{\substack{b \in \B \\ e_i(b)=0 \forall i}} s_{\wt(b)}.
  \end{equation}
  In particular, the character of $\B$ is symmetric and Schur positive.
  \label{cor:highest-weights}
\end{corollary}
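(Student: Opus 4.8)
The plan is to reduce to the connected case using the additivity of the character, and then read off each connected piece from the classification of connected normal crystals recorded above. The first and easiest step is to observe that the character defined in \eqref{e:char} is nothing more than a weighted vertex count, so it is additive over the decomposition of $\B$ into its connected components: writing $\B = \bigsqcup_C C$, we have $\mathrm{ch}(\B) = \sum_C \mathrm{ch}(C)$.

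Next I would check that each connected component $C$ is itself a connected normal crystal, and then apply the classification. Here I would invoke Proposition~\ref{prop:normal-A}: normality of $\B$ means it occurs among the connected components of a tensor power $\B(r+1)^{\otimes k}$, and a connected component of $\B$ is then also a connected component of $\B(r+1)^{\otimes k}$, hence connected and normal. Granting this, the proposition asserting that a connected normal crystal has a unique highest weight supplies, for each $C$, a unique element $b_C \in C$ with $e_i(b_C) = 0$ for all $i$, together with an isomorphism $C \cong \B(\wt(b_C))$ of colored directed graphs. This isomorphism is automatically weight-preserving: within any connected crystal, condition (1) of Definition~\ref{def:base-A} forces the weight of every vertex to equal that of the highest weight element shifted by the sum of the simple roots recording the colors along a path to it, so matching highest weight elements and preserving edge colors forces all weights to agree. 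Hence $\mathrm{ch}(C) = \mathrm{ch}(\B(\wt(b_C))) = s_{\wt(b_C)}$, the last equality being the identification $\mathrm{ch}(\B(\lambda)) = s_\lambda$ noted just above the statement.

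Finally, the assignment $C \mapsto b_C$ is a bijection from the connected components of $\B$ onto $\{b \in \B \mid e_i(b) = 0 \text{ for all } i\}$, since every highest weight element lies in a unique component and is the unique highest weight element there. Summing $\mathrm{ch}(C) = s_{\wt(b_C)}$ over components therefore produces exactly the asserted expansion. Symmetry and Schur positivity are then immediate: each $s_{\wt(b)}$ is symmetric, and the coefficients of the expansion (namely the number of highest weight elements of each weight) are non-negative integers.

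I expect the only genuinely delicate point to be the claim that a connected component of a normal crystal is again normal; everything else is bookkeeping on top of the cited propositions. In a fully self-contained treatment this would require unwinding the characterization of normality in Proposition~\ref{prop:normal-A} and verifying that passing to a connected component is compatible with the embedding into $\B(r+1)^{\otimes k}$, after which the weight-preservation and bijection arguments proceed routinely.
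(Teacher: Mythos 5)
Your proposal is correct and matches the paper's (largely implicit) justification: the paper offers no separate proof of this corollary, relying on the preceding remark that a normal crystal decomposes into connected components, each isomorphic to some $\B(\lambda)$ with character $s_{\lambda}$, indexed by its unique highest weight element. Your write-up simply makes that bookkeeping explicit, including the one point the paper glosses over (that connected components of a normal crystal are again normal), which you handle correctly via Proposition~\ref{prop:normal-A}.
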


%

\subsection{Local characterization of crystals}
\label{sec:local-A}

Establishing that a given directed graph structure on a combinatorial set is a normal crystal is difficult. To simplify this greatly, Stembridge \cite{Ste03} gave a local characterization of crystals that arise from representations for simply-laced types.

In order to define Stembridge's axioms for type $A_{r+1}$, we first introduce notation associated with a directed, colored graph. A \emph{graph of dimension $r+1$} will mean a directed, colored graph $\mathcal{X}$ with directed edges $e_i(x) {\blue \iar} x {\blue \iar} f_i(x)$ for $i=1,2,\ldots,r$. Extending the string lengths $\upe_i,\downf_i$ to this case, we also have the following differences whenever $e_i, f_i$ is defined at $x$,
\begin{displaymath}
  \begin{array}{rclcrcl}
    \Delta_i \upe_j(x) & = & \upe_j( x) - \upe_j(e_i(x)), & &
    \nabla_i \upe_j(x) & = & \upe_j(f_i(x)) - \upe_j( x), \\
    \Delta_i \downf_j(x) & = & \downf_j(e_i x) - \downf_j(x), & &
    \nabla_i \downf_j(x) & = & \downf_j(x) - \downf_j(f_i x).
  \end{array}
\end{displaymath}

\begin{definition}[\cite{Ste03}]
  A directed, colored graph $\mathcal{X}$ is {\em regular} if the following hold:
  \begin{itemize}
  \item[(A1)] all monochromatic directed paths have finite length;

  \item[(A2)] for every vertex $x$, there is at most one edge $x {\blue \ial} y$ and at most one edge $x {\blue \iar} z$;

  \item[(A3)] assuming $e_i x$ is defined, $\Delta_i \upe_j(x) + \Delta_i \downf_j(x) = \left\{
    \begin{array}{rl}
        2 & \;\mbox{if}\;\; j=i \\
        -1 & \;\mbox{if}\;\; j=i\pm 1 \\
        0 & \;\mbox{if}\;\; |i-j|\geq 2
    \end{array}
    \right.$;
    
  \item[(A4)] assuming $e_i x$ is defined, $\Delta_i \upe_j(x), \Delta_i \downf_j(x) \leq 0$ for $j \neq i$;

  \item[(A5)] $\Delta_i \upe_j(x) = 0$ $\Rightarrow$ $e_ie_j x = e_je_i x = y$ and $\nabla_j \downf_i(y) = 0$; \\
    $\nabla_i \downf_j(x) = 0$ $\Rightarrow$ $f_if_j x = f_jf_i x = y$ and $\Delta_j \upe_i(y) = 0$;

  \item[(A6)] $\Delta_i \upe_j(x) = \Delta_j \upe_i(x) = -1$ $\Rightarrow$ $E_iE_{j}^{2}E_i x = E_jE_{i}^{2}E_j x = y$ and $\nabla_i \downf_j(y) = \nabla_j \downf_i(y)=-1$; \\
    $\nabla_i \downf_j(x) = \nabla_j \downf_i(x) = -1$ $\Rightarrow$ $F_iF_{j}^{2}F_i x = F_jF_{i}^{2}F_j x = y$ and $\Delta_i \upe_j(y) = \Delta_j \upe_i(y)=-1$.
  \end{itemize}
  \label{def:regular}
\end{definition}

Axiom A1 ensures that the crystal operators have finite order when applied to a given basis element, though it does not force the graph itself to be finite. Axiom A2 ensures that the edges of the graph correspond to well-defined operators, which we assume map a basis element to $0$ in the absence of an edge.

Axioms A3 and A4 dictate how the length of a $j$-string differs between $x$ and $e_i x$; see Figure~\ref{fig:P3P4}. When $|i-j| \geq 2$, there is no change (left case), but when $j = i\pm 1$, the length changes by $1$, either by decreasing the tail when $\Delta_i \upe(x,j) = -1$ (middle case) or by increasing the head when $\Delta_i \downf(x,j) = -1$ (right case).

\begin{figure}[ht]
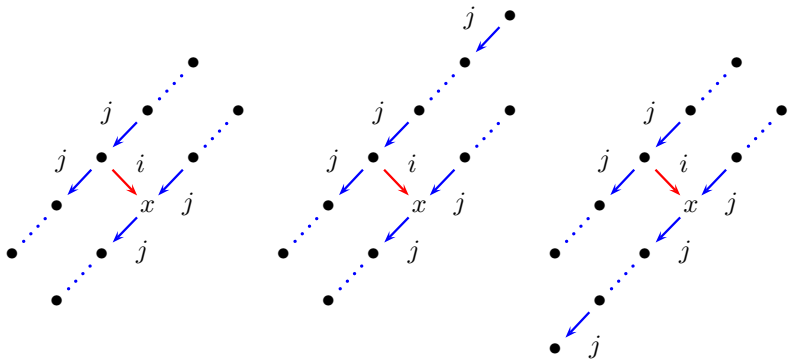

  \begin{center}
    \begin{displaymath}
      \begin{array}{c@{\hskip\cellsize}c@{\hskip\cellsize}c@{\hskip\cellsize}c@{\hskip\cellsize}c@{\hskip\cellsize}c@{\hskip\cellsize} c@{\hskip\cellsize}c@{\hskip\cellsize}c@{\hskip\cellsize}c@{\hskip\cellsize}c@{\hskip\cellsize}c@{\hskip\cellsize} c@{\hskip\cellsize}c@{\hskip\cellsize}c@{\hskip\cellsize}c@{\hskip\cellsize}c@{\hskip\cellsize}c@{\hskip\cellsize}}
        & & & & & & & & & & & \rnode{a12}{\bullet} & & & & & & \\[0.5\cellsize]
        & & & &  \rnode{b5}{\bullet} & &
        & & & & \rnode{b11}{\bullet} & & 
        & & & & \rnode{a17}{\bullet} & \\[0.5\cellsize]
        & & & \rnode{c4}{\bullet}  & & \rnode{c6}{\bullet}  &
        & & & \rnode{c10}{\bullet} & & \rnode{c12}{\bullet} &
        & & & \rnode{b16}{\bullet} & & \rnode{b18}{\bullet} \\[0.5\cellsize]
        & & \rnode{d3}{\bullet}  & & \rnode{d5}{\bullet}  & &
        & & \rnode{d9}{\bullet}  & & \rnode{d11}{\bullet} & &
        & & \rnode{c15}{\bullet} & & \rnode{c17}{\bullet} & \\[0.5\cellsize]
        & \rnode{e2}{\bullet}  & & \rnode{e4}{x}  & & &
        & \rnode{e8}{\bullet}  & & \rnode{e10}{x} & & &
        & \rnode{d14}{\bullet} & & \rnode{d16}{x} & & \\[0.5\cellsize]
        \rnode{f1}{\bullet}  & & \rnode{f3}{\bullet}  & & & &
        \rnode{f7}{\bullet}  & & \rnode{f9}{\bullet}  & & & &
        \rnode{e13}{\bullet} & & \rnode{e15}{\bullet} & & & \\[0.5\cellsize]
        & \rnode{g2}{\bullet}  & & & & &
        & \rnode{g8}{\bullet}  & & & & &        
        & \rnode{f14}{\bullet} & & & & \\[0.5\cellsize]
        & & & & & & & & & & & & \rnode{g13}{\bullet} & & & & & 
      \end{array}
      \psset{linewidth=.2ex,nodesep=2pt}
      \ncline[linecolor=blue,linestyle=dotted,linewidth=.3ex] {b5}{c4}
      \ncline[linecolor=blue]{->}              {c4}{d3} \nbput{j}
      \ncline[linecolor=blue]{->}              {d3}{e2} \nbput{j}
      \ncline[linecolor=blue,linestyle=dotted,linewidth=.3ex] {e2}{f1}
      \ncline[linewidth=.2ex,linecolor=red]{->}  {d3}{e4}  \naput{i}
      \ncline[linecolor=blue,linestyle=dotted,linewidth=.3ex] {c6}{d5}
      \ncline[linecolor=blue]{->}              {d5}{e4} \naput{j}
      \ncline[linecolor=blue]{->}              {e4}{f3} \naput{j}
      \ncline[linecolor=blue,linestyle=dotted,linewidth=.3ex] {f3}{g2}
      \ncline[linecolor=blue]{->}              {a12}{b11} \nbput{j}
      \ncline[linecolor=blue,linestyle=dotted,linewidth=.3ex] {b11}{c10}
      \ncline[linecolor=blue]{->}              {c10}{d9}  \nbput{j}
      \ncline[linecolor=blue]{->}              {d9} {e8}  \nbput{j}
      \ncline[linecolor=blue,linestyle=dotted,linewidth=.3ex] {e8} {f7}
      \ncline[linewidth=.2ex,linecolor=red]{->}  {d9}{e10}  \naput{i}
      \ncline[linecolor=blue,linestyle=dotted,linewidth=.3ex] {c12}{d11}
      \ncline[linecolor=blue]{->}              {d11}{e10} \naput{j}
      \ncline[linecolor=blue]{->}              {e10}{f9}  \naput{j}
      \ncline[linecolor=blue,linestyle=dotted,linewidth=.3ex] {f9} {g8}
      \ncline[linecolor=blue,linestyle=dotted,linewidth=.3ex] {a17}{b16}
      \ncline[linecolor=blue]{->}              {b16}{c15} \nbput{j}
      \ncline[linecolor=blue]{->}              {c15}{d14} \nbput{j}
      \ncline[linecolor=blue,linestyle=dotted,linewidth=.3ex] {d14}{e13}
      \ncline[linewidth=.2ex,linecolor=red]{->}  {c15}{d16}  \naput{i}
      \ncline[linecolor=blue,linestyle=dotted,linewidth=.3ex] {b18}{c17}
      \ncline[linecolor=blue]{->}              {c17}{d16} \naput{j}
      \ncline[linecolor=blue]{->}              {d16}{e15} \naput{j}
      \ncline[linecolor=blue,linestyle=dotted,linewidth=.3ex] {e15}{f14}
      \ncline[linecolor=blue]{->}              {f14}{g13} \naput{j}
    \end{displaymath}
    \caption{\label{fig:P3P4}An illustration of axioms A3 and A4, where $f_j {\blue \swarrow}$, $f_i {\red \searrow}$.}
  \end{center}
\end{figure}  

Axioms A5 and A6 give information about how edges with different labels interact locally; see Figure~\ref{fig:P5P6}. When $|i-j| \geq 2$, the conditions of A5 will always be satisfied, though when $j = i\pm 1$, either A5 or A6 could hold, the former when $\Delta_i \upe(x,j) = 0$ and the latter when $\Delta_i \upe(x,j) = \Delta_j \upe(x,i) = -1$.

\begin{figure}[ht]
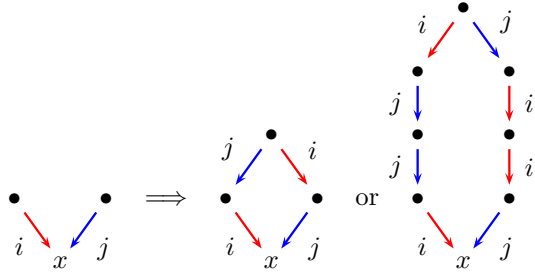

  \begin{center}
    \begin{displaymath}
      \begin{array}{c@{\hskip\cellsize}c@{\hskip\cellsize}c@{\hskip\cellsize}c@{\hskip\cellsize}c@{\hskip\cellsize}c@{\hskip\cellsize}c@{\hskip\cellsize}c@{\hskip\cellsize}c@{\hskip\cellsize}c@{\hskip\cellsize}c}  
        & & & & & & & & & \rnode{a11}{\bullet} & \\[\cellsize]
        & & & & & & & & \rnode{b10}{\bullet} & & \rnode{b12}{\bullet} \\[\cellsize]
        & & & & & \rnode{c6}{\bullet} & & & \rnode{c11l}{\bullet} & & \rnode{c11r}{\bullet} \\[\cellsize] 
        \rnode{d1}{\bullet} & & \rnode{d3}{\bullet} & \rnode{d4}{\Longrightarrow} & \rnode{d5}{\bullet} & & \rnode{d7}{\bullet} & \rnode{d8}{\mbox{or}} & \rnode{d9}{\bullet} & & \rnode{d13}{\bullet} \\[\cellsize]
        & \rnode{e2}{x} & & & & \rnode{e6}{x} & & & & \rnode{e11}{x} & 
      \end{array}
      \psset{linewidth=.2ex,nodesep=3pt}
      \ncline[linecolor=red,linewidth=.2ex]{->} {d1}{e2}  \nbput{i}
      \ncline[linecolor=blue]{->} {d3}{e2}  \naput{j}
      \ncline[linecolor=blue]{->} {c6}{d5}  \nbput{j}
      \ncline[linecolor=red,linewidth=.2ex]{->} {c6}{d7}  \naput{i}
      \ncline[linecolor=red,linewidth=.2ex]{->} {d5}{e6}  \nbput{i}
      \ncline[linecolor=blue]{->} {d7}{e6}  \naput{j}
      \ncline[linecolor=blue]{->} {a11}{b12}  \naput{j}
      \ncline[linecolor=red,linewidth=.2ex]{->} {a11}{b10}  \nbput{i}
      \ncline[linecolor=red,linewidth=.2ex]{->} {b12}{c11r} \naput{i}
      \ncline[linecolor=blue]{->} {b10}{c11l} \nbput{j}
      \ncline[linecolor=blue]{->} {c11l}{d9}  \nbput{j}
      \ncline[linecolor=red,linewidth=.2ex]{->} {c11r}{d13} \naput{i}
      \ncline[linecolor=red,linewidth=.2ex]{->} {d9}{e11}   \nbput{i}
      \ncline[linecolor=blue]{->} {d13}{e11}  \naput{j}
    \end{displaymath}
    \caption{\label{fig:P5P6}An illustration of axioms A5 and A6, where $f_j {\blue \swarrow}$, $f_i {\red \searrow}$.}
  \end{center}
\end{figure}

The following theorem shows that regular graphs correspond precisely to the crystal graphs of representations for the general linear group. This result allows us to combinatorialize the problem of studying representations of the general linear group by instead studying regular graphs of degree $n$.

\begin{theorem}[\cite{Ste03}]
  Every normal crystal is a regular graph and every regular graph is a normal crystal.
\label{thm:structure-crystal}
\end{theorem}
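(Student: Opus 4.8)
The plan is to prove the two implications separately, since one direction is essentially a bookkeeping verification while the other carries the genuine content.

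\textbf{Normal crystals are regular.} For the forward direction I would lean entirely on Proposition~\ref{prop:normal-A}: every connected normal crystal occurs as a connected component of some tensor power $\B(r+1)^{\otimes k}$. Every condition in Definition~\ref{def:regular} is \emph{local} — it refers only to a vertex $x$, the vertices reachable from it by bounded words in the $e_i$ and $f_i$, and the associated string lengths — so a connected component inherits the axioms from the ambient graph, and it suffices to check A1--A6 on $\B(r+1)^{\otimes k}$. Axiom A2 is immediate from condition~(1) of Definition~\ref{def:base-A}, and A1 holds because in a fixed tensor power each weight coordinate is bounded by $k$, forcing every monochromatic string to terminate. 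For A3--A6 I would induct on the number of tensor factors: the base case $\B(r+1)$ is a single string in each color and is trivially regular, while the inductive step is a finite case analysis on the signature rule of Definition~\ref{def:tensor-A}, tracking how $\upe_i(b_1\otimes b_2)$ and $\downf_i(b_1\otimes b_2)$ are computed from the factors. This step is routine but tedious.

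\textbf{Regular graphs are normal.} The converse is the substantive half. Given a graph $\mathcal{X}$ satisfying A1--A6, I would first reconstruct a weight map turning $\mathcal{X}$ into a crystal in the sense of Definition~\ref{def:base-A}. Axiom A1 guarantees that iterating the $e_i$ upward terminates, so each connected component contains a highest weight vertex $b$ with $e_i(b)=0$ for all $i$. I would assign $b$ the dominant weight $\lambda$ determined by $\lambda_i-\lambda_{i+1}=\downf_i(b)$, then extend the weight to the whole component by decreeing that an $i$-edge lowers the weight by $\alpha_i$. Axiom A3 is precisely the condition ensuring that the string-length content $\downf_i(x)-\upe_i(x)$ transforms consistently along edges, so that this weight is well defined and condition~(2) of Definition~\ref{def:base-A} holds globally. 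With weights in hand, the goal reduces to showing that each component is isomorphic, as a colored directed graph, to $\B(\lambda)$ for its highest weight $\lambda$.

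To produce the isomorphism I would define a map $\B(\lambda)\to\mathcal{X}$ sending the image of the highest weight of $\B(\lambda)$ under a word $f_{i_1}\cdots f_{i_m}$ to $f_{i_1}\cdots f_{i_m}(b)$, and then show it is well defined and bijective. Well-definedness amounts to verifying that every relation among the $f_i$ holding in $\B(\lambda)$ also holds in $\mathcal{X}$; this is exactly what axioms A5 and A6 supply, since they encode the commutation relation $e_ie_j=e_je_i$ and the braid-type relation $E_iE_j^2E_i=E_jE_i^2E_j$ locally, matching their behavior in a tensor power of the standard crystal. Surjectivity and injectivity would then follow by induction, and uniqueness of the highest weight per component comes for free from the resulting classification by $\lambda$.

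\textbf{Main obstacle.} The crux, and the part I expect to be hard, is promoting the purely local identities A5 and A6 to the \emph{global} statement that each component is exactly $\B(\lambda)$, with no missing or extra vertices and edges. Local axioms control only nearest-neighbor interactions, so one must argue by a careful induction — on distance from the highest weight, or on weight in dominance order — that the two-color subgraphs generated at each vertex close up correctly and that the comparison map above is simultaneously surjective and injective. This coordinated induction, in which the upper/lower symmetry of A3--A6 is what makes the argument close, is the technical heart of Stembridge's proof.
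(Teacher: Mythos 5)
This is a cited result, and the paper does not reprove it: it records only that Stembridge's argument runs through Littelmann's path operators \cite{Lit94} --- one shows the path model produces regular graphs and already realizes $\B(\lambda)$, which yields both existence and (after a uniqueness argument) the converse. Your forward direction takes a different but legitimate route, verifying A1--A6 on $\B(r+1)^{\otimes k}$ by induction on tensor factors via the signature rule; this is indeed routine, though the case analysis for $\Delta_i\upe_j$ when $e_i$ acts on different factors of $b$ and of $e_jb$ is fiddlier than ``bookkeeping'' suggests.

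The genuine gap is in the converse. Your plan is to define a map $\B(\lambda)\to\mathcal{X}$ by $f_{i_1}\cdots f_{i_m}(\text{hw})\mapsto f_{i_1}\cdots f_{i_m}(b)$ and to get well-definedness from the claim that ``every relation among the $f_i$ holding in $\B(\lambda)$ also holds in $\mathcal{X}$; this is exactly what axioms A5 and A6 supply.'' That claim is not justified and is not true in the form stated: A5 and A6 give you only two specific local relations (the commutation $f_if_j=f_jf_i$ when $\nabla_i\downf_j=0$, and $F_iF_j^2F_i=F_jF_i^2F_j$ when $\nabla_i\downf_j=\nabla_j\downf_i=-1$), and it is not known --- and is precisely the content of the theorem --- that all coincidences of lowering words in $\B(\lambda)$ are formal consequences of these two. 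A presentation-style argument therefore cannot get off the ground without further input. Stembridge's actual uniqueness proof does not factor through $\B(\lambda)$ as a quotient of a free object; it compares two regular graphs with the same highest weight directly, extending an isomorphism edge by edge under a coordinated induction in which the dual (upper) halves of A3--A6 are essential to show the extension is forced. You correctly name this induction as the technical heart, but naming it is all the proposal does; the step that would make the map well defined, injective, and surjective is exactly the step that is missing.
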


The theorem is proved by showing that Littelmann's path operators \cite{Lit94} generate regular graphs, since the Path Model is known to generate $\B(\lambda)$. This then gives a straightforward machinery for proving that a given structure is a crystal graph by showing that the graph is regular.

%
\section{A crystal for shifted tableaux}
%
\label{sec:schur-P}

Schur $P$-polynomials arise as characters of tensor representations of the queer Lie superalgebra \cite{Ser84}, characters of projective representations of the symmetric group \cite{Ste89}, and representatives for cohomology classes dual to Schubert cycles in isotropic Grassmannians \cite{Pra91}. Stanley conjectured that Schur $P$-polynomials are Schur positive, and this follows from Sagan's shifted insertion \cite{Sag87} independently developed by Worley \cite{Wor84}. Assaf \cite{Ass18} gave another proof using the machinery of dual equivalence graphs \cite{Ass15}. In this section, we present a new proof using the machinery of crystal graphs. In \S\ref{sec:tableaux-B}, we review the combinatorial definition of Schur $P$-polynomials as the generating polynomials for semistandard shifted tableaux. In \S\ref{sec:crystal-B}, we define crystal operators on semistandard shifted tableaux analogous to those for semistandard Young tableaux. In \S\ref{sec:local-B}, we use Stembridge's axioms to prove that our crystal is normal, thus giving a new proof of the Schur positivity of Schur $P$-polynomials.

\subsection{Shifted tableaux}
\label{sec:tableaux-B}

A partition $\gamma$ is \emph{strict} if  $\gamma_1 > \gamma_2 > \cdots > \gamma_{\ell} > 0$. We identify a strict partition $\gamma$ with its shifted Young diagram, the collection of cells with $\gamma_i$ cells in row $i$ shifted $\ell(\gamma)-i$ cells to the left. 

A \emph{semistandard shifted tableau} of shape $\gamma$ is a filling of the shifted Young diagram for $\gamma$ with marked or unmarked positive integers such that entries weakly increase along rows and columns according to the ordering $\st{1} < 1 < \st{2} < 2 < \cdots$, each row at has most one marked entry $\st{i}$ for each $i$, each column has at most one unmarked entry $i$ for each $i$, and the diagonal has no markings. For example, see Figure~\ref{fig:P31}. 

\begin{figure}[ht]
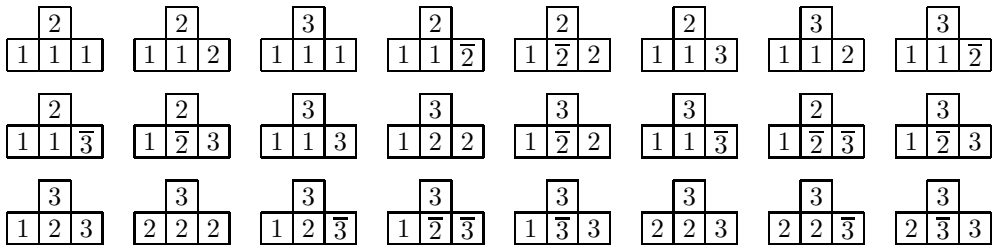

  \begin{center}
    \begin{displaymath}
      \begin{array}{c@{\hskip\cellsize}c@{\hskip\cellsize}c@{\hskip\cellsize}c@{\hskip\cellsize}c@{\hskip\cellsize}c@{\hskip\cellsize}c@{\hskip\cellsize}c}
        \tableau{ & 2 \\ 1 & 1 & 1} & \tableau{ & 2 \\ 1 & 1 & 2} & \tableau{ & 3 \\ 1 & 1 & 1} & \tableau{ & 2 \\ 1 & 1 & \st{2}} & \tableau{ & 2 \\ 1 & \st{2} & 2} & \tableau{ & 2 \\ 1 & 1 & 3} & \tableau{ & 3 \\ 1 & 1 & 2} & \tableau{ & 3 \\ 1 & 1 & \st{2}} \\[4ex]
        \tableau{ & 2 \\ 1 & 1 & \st{3}} & \tableau{ & 2 \\ 1 & \st{2} & 3} & \tableau{ & 3 \\ 1 & 1 & 3} & \tableau{ & 3 \\ 1 & 2 & 2} & \tableau{ & 3 \\ 1 & \st{2} & 2} & \tableau{ & 3 \\ 1 & 1 & \st{3}} & \tableau{ & 2 \\ 1 & \st{2} & \st{3}} & \tableau{ & 3 \\ 1 & \st{2} & 3} \\[4ex]
        \tableau{ & 3 \\ 1 & 2 & 3} & \tableau{ & 3 \\ 2 & 2 & 2} & \tableau{ & 3 \\ 1 & 2 & \st{3}} & \tableau{ & 3 \\ 1 & \st{2} & \st{3}} & \tableau{ & 3 \\ 1 & \st{3} & 3} & \tableau{ & 3 \\ 2 & 2 & 3} & \tableau{ & 3 \\ 2 & 2 & \st{3}} & \tableau{ & 3 \\ 2 & \st{3} & 3}
      \end{array}
    \end{displaymath}
    \caption{\label{fig:P31}The semistandard shifted tableaux of shape $(3,1)$ with entries $\{\st{1},1,\st{2},2,\st{3},3\}$ and no marks on the main diagonal.}
  \end{center}
\end{figure}

Schur $P$-polynomials enjoy nice properties parallel to Schur polynomials. 

\begin{definition}
  The Schur $P$-polynomial indexed by the strict partition $\gamma$ is given by
  \begin{equation}
    P_{\gamma} (x_1,\ldots,x_{n}) = \sum_{S \in \SSHT_n(\gamma)} x_1^{\wt(S)_1} \cdots x_n^{\wt(S)_n},
    \label{e:schur-P}
  \end{equation}
  where $\SSHT_n(\gamma)$ denotes the set of all semistandard shifted tableaux of shifted shape $\gamma$ with largest entry at most $n$, and $\wt(S)$ is composition whose $i$th part is the total multiplicity with which $i$ and $\st{i}$ occur in $S$. 
  \label{def:schur-P}
\end{definition}
  
For example, from Figure~\ref{fig:P31}, we compute
\begin{eqnarray*}
  P_{(3,1)}(x_1,x_2,x_3) & = & x_1^3 x_2 + x_1^3 x_3 + 2 x_1^2 x_2^2 + 4 x_1^2 x_2 x_3 + 2 x_1^2 x_3^2 + x_1 x_2^3 \\
  & & + 4 x_1 x_2^2 x_3 + 4 x_1 x_2 x_3^2 + x_1 x_3^3 + x_2^3 x_3 + 2 x_2^2 x_3^2 + x_2 x_3^3.
\end{eqnarray*}

Stanley conjectured that Schur $P$ polynomials expand positively into the Schur basis, and this follows as a corollary to Sagan's shifted insertion \cite{Sag87} independently developed by Worley \cite{Wor84}. Another combinatorial proof appears in \cite{Ass18} using the machinery of combinatorial graphs.

\begin{theorem}[\cite{Sag87,Wor84}]
  For $\gamma$ a strict partition, the coefficients $g_{\gamma,\lambda}$ defined by
    \begin{equation}
      P_{\gamma} (x_1,\ldots,x_{n}) = \sum_{\lambda} g_{\gamma,\lambda} s_{\lambda}(x_1,\ldots,x_{n})
      \label{e:P_schur}
    \end{equation}
    are non-negative integers. That is, Schur $P$-polynomials are Schur positive.
\label{thm:P_Schur}
\end{theorem}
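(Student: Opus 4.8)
The plan is to realize $P_\gamma$ as the character of a normal crystal and then invoke the positivity built into the theory of Section~\ref{sec:schur}. By Definition~\ref{def:schur-P}, the polynomial $P_\gamma(x_1,\ldots,x_n)$ is exactly $\mathrm{ch}(\SSHT_n(\gamma))$ for the weight map $\wt$ that records the combined multiplicity of $i$ and $\st{i}$. Thus, if I can equip $\SSHT_n(\gamma)$ with raising and lowering operators $e_i,f_i$ making it a \emph{normal} crystal with this weight function, then Corollary~\ref{cor:highest-weights} immediately yields
\[
  P_\gamma = \mathrm{ch}(\SSHT_n(\gamma)) = \sum_{\substack{S \in \SSHT_n(\gamma) \\ e_i(S)=0\ \forall i}} s_{\wt(S)},
\]
so that $g_{\gamma,\lambda}$ counts the highest weight shifted tableaux of weight $\lambda$, a manifestly non-negative integer. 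The entire burden of the proof is therefore to construct the operators and to verify that the resulting colored graph is normal.

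First I would define $e_i,f_i$ on $\SSHT_n(\gamma)$ by mimicking the Young tableau case: read the entries of $S$ in a suitable reading word, regard both $\st{i}$ and $i$ as contributing to the $i$th weight coordinate, and set up an $i$-pairing of the letters carrying weight $i$ against those carrying weight $i+1$. The operator $f_i$ should act on the rightmost $i$-free entry of weight $i$, changing it to weight $i+1$ while preserving the marking conventions, and $e_i$ should act dually on the leftmost $i$-free entry of weight $i+1$. The first checks are that these maps are well defined, i.e. that the output is again a genuine element of $\SSHT_n(\gamma)$ (rows and columns remain weakly increasing in the order $\st{1}<1<\st{2}<2<\cdots$, the column and row restrictions on repeated entries persist, and the diagonal stays unmarked), and that they satisfy condition~(1) of Definition~\ref{def:base-A}, namely $e_i(S)=S'\Leftrightarrow f_i(S')=S$ with $\wt(S')=\wt(S)+\alpha_i$. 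Handling the primed letters and the diagonal constraint is exactly the feature that is absent in the classical setting, so the reading word must be chosen so that pairing behaves coherently across the shift.

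With the operators in hand, the core of the argument is to verify that the graph on $\SSHT_n(\gamma)$ satisfies Stembridge's axioms (A1)--(A6) of Definition~\ref{def:regular}; once this is done, Theorem~\ref{thm:structure-crystal} declares the graph a normal crystal and the positivity follows as above. Axioms A1 and A2 are essentially immediate from the definition of the operators via a reading word. Axioms A3 and A4, governing how the length of a $j$-string changes between $S$ and $e_iS$, reduce to tracking how a single application of $e_i$ perturbs the $i$-pairing of nearby entries, and should follow from a local analysis of the at most finitely many relevant configurations of letters in the reading word. \textbf{The hard part will be axioms A5 and A6}, the commutation relations that control how $e_i$ and $e_j$ edges interlock for $j=i\pm1$: these require a careful case analysis of the local neighborhood of a vertex, and it is precisely here that the marked entries and the diagonal condition threaten to break the clean pairing dynamics of the unshifted case. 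My expectation is that, after setting up the reading word correctly, these axioms can be reduced to the corresponding statements for ordinary tableaux on short subwords, so that the verification becomes a bounded, if intricate, combinatorial check rather than a genuinely new phenomenon.
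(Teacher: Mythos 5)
Your proposal follows exactly the route the paper takes: define shifted raising/lowering operators on $\SSHT_n(\gamma)$ via a reading word and an $i$-pairing rule, verify Stembridge's axioms A1--A6 so that Theorem~\ref{thm:structure-crystal} makes the graph a normal crystal, and read off positivity from Corollary~\ref{cor:highest-weights} (this is the paper's Corollary~\ref{cor:P2schur}). The one caution is that the operators cannot literally be ``change the rightmost $i$-free entry while preserving the markings'': in several cases (the paper's L1(a),(c),(d) and L2(a),(c)) a second cell must be modified and a marking moved along a ribbon to keep the result semistandard, and this two-cell action is what makes the well-definedness check (Theorem~\ref{thm:shifted-well}) and the invertibility/axiom verification the bulk of the work.
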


For example, our previous computation can we written as 
\begin{eqnarray*}
  P_{(3,1)}(x_1,x_2,x_3) & = & s_{(3,1)}(x_1,x_2,x_3) + s_{(2,2)}(x_1,x_2,x_3) + s_{(2,1,1)}(x_1,x_2,x_3).
\end{eqnarray*}

Stembridge \cite{Ste89} expanded on the idea of shifted insertion in his study of projective representations of the symmetric group, and ultimately established that the product of Schur $P$-polynomials expands positively in the Schur $P$ basis. More recently, Cho \cite{Cho13} built on work of Serrano \cite{Ser10} to give another proof of positivity, and Assaf \cite{Ass18} gave another proof using the machinery of combinatorial graphs.

\begin{theorem}[\cite{Ste89}]
  For $\gamma,\delta$ strict partitions, the coefficients $f_{\gamma,\delta}^{\epsilon}$ defined by
  \begin{equation}
    P_{\gamma} (x_1,\ldots,x_{n}) P_{\delta} (x_1,\ldots,x_{n}) = \sum_{\epsilon} f_{\gamma,\delta}^{\epsilon} P_{\epsilon}(x_1,\ldots,x_{n}),
    \label{e:P_product}
  \end{equation}
  are non-negative integers. That is, products of Schur $P$-polynomials are Schur $P$ positive.
\label{thm:P_product}
\end{theorem}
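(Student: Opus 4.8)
The plan is to prove Schur $P$-positivity by exactly the crystal-theoretic mechanism underlying Corollary~\ref{cor:highest-weights}, but carried out in the queer setting constructed in \S\ref{sec:graphs-Q}. The essential observation is that $P_\gamma$ is not merely a symmetric function but the character of a connected normal queer crystal on $\SSHT_n(\gamma)$, and that, just as in Proposition~\ref{prop:normal-A}, this class of crystals is closed under tensor products with the expected multiplicative behavior of characters. Thus Schur $P$-positivity of a product should fall out of decomposing a tensor product into connected components.

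Concretely, I would first invoke the explicit normal queer crystal structure on $\SSHT_n(\gamma)$ supplied in \S\ref{sec:graphs-Q}, which is connected with a unique highest weight indexed by $\gamma$ and satisfies $\mathrm{ch}(\SSHT_n(\gamma)) = P_\gamma$. Next, using the tensor rule for queer crystals (the queer analog of Definition~\ref{def:tensor-A}), I would form $\SSHT_n(\gamma) \otimes \SSHT_n(\delta)$ and argue, via the queer analog of Proposition~\ref{prop:normal-A}, that the tensor product of two normal queer crystals is again normal. Since the weight map is additive across tensor factors, the character multiplies,
\[
  \mathrm{ch}\!\left(\SSHT_n(\gamma)\otimes\SSHT_n(\delta)\right) = P_\gamma(x_1,\ldots,x_n)\,P_\delta(x_1,\ldots,x_n).
\]
I would then decompose the tensor product into connected components. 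Each connected normal queer crystal is classified by its highest weight, a strict partition $\epsilon$, and is isomorphic to the crystal on $\SSHT_n(\epsilon)$, hence has character $P_\epsilon$. Summing over components yields
\[
  P_\gamma\,P_\delta = \sum_{\epsilon} f_{\gamma,\delta}^{\epsilon}\,P_\epsilon,
\]
where $f_{\gamma,\delta}^{\epsilon}$ is the number of queer highest weight elements of weight $\epsilon$ in $\SSHT_n(\gamma)\otimes\SSHT_n(\delta)$, manifestly a non-negative integer. Matching this with \eqref{e:P_product} completes the argument.

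The main obstacle is assembling the queer structural package that makes this run, all of which must be established in \S\ref{sec:graphs-Q} before the decomposition is justified: a usable notion of queer highest weight element (annihilated by every even raising operator $e_i$ together with the odd raising operator), the correct tensor rule for the odd Kashiwara operators, closure of normal queer crystals under tensor products, and the classification of connected normal queer crystals by strict partitions with character a single Schur $P$-polynomial. The delicate point is verifying that the odd operators interact correctly with the tensor product, so that the connected components are genuinely the full queer crystals $\SSHT_n(\epsilon)$ and not fragmented into smaller pieces; once this compatibility is in hand, the multiplicativity of the character and the one-to-one correspondence between components and strict partitions make the Schur $P$-positivity immediate.
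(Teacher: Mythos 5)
Your proposal is correct and follows essentially the same route as the paper: the paper proves Theorem~\ref{thm:Q-crystal} (the shifted and queer operators make $\SSHT_{r+1}(\gamma)$ a connected normal queer crystal with character $P_\gamma$), invokes Proposition~\ref{prop:queer-tensor} for closure of normal queer crystals under tensor products and their classification by strict highest weights, and then records exactly your decomposition-into-components argument as Corollary~\ref{cor:PP2P}, where $f_{\gamma,\delta}^{\epsilon}$ counts pairs $(S,T)$ with $S\otimes T$ a queer highest weight of weight $\epsilon$. The only small imprecision is that a queer highest weight element must be killed by all odd raising operators $e_{\st{i}}$, $i=1,\ldots,r$, not just the single operator $e_0$.
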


In the present paper, we give new combinatorial proofs of Theorems~\ref{thm:P_Schur} and \ref{thm:P_product} using crystal graphs, both of which give rise to new combinatorial formulas for the expansions.

\subsection{Crystals on shifted tableaux}
\label{sec:crystal-B}

We give a new proof of the Schur positivity of Schur $P$-polynomials by constructing a crystal graph on semistandard shifted tableaux. We note that Hawkes, Paramonov, and Schilling \cite{HPS17} recently constructed a crystal on semistandard shifted tableaux in the context of type B/C Stanley symmetric functions. Their construction uses Haiman's mixed insertion \cite{Hai89} to associate a reduced word for a signed permutation with a pair of shifted tableaux, the left semistandard and the right standard. Using shifted insertion developed independently by Worley \cite{Wor84} and Sagan \cite{Sag87} to associate the left semistandard shifted tableau with a semistandard Young tableauthey are able to put the usual crystal structure reviewed in \S\ref{sec:tableaux-A} on the semistandard Young tableaux. Therefore, while the Hawkes-Paramonov-Schilling construction gives a desirable crystal interpretation of the Schur positivity of Schur $P$-polynomials, it does not give a new proof of positivity in the sense that it relies on the positivity that follows from shifted insertion \cite{Wor84,Sag87}.

We developed our crystal independently, unaware of \cite{HPS17}, and our presentation below is direct and differs from the derived description in \cite{HPS17}. However, in Proposition~\ref{prop:HPS} below, we prove that the two constructions are indeed equivalent. Nevertheless, we proceed with our direct description below and give a direct proof, using Stembridge's axioms, that this defines a normal crystal, thus giving a new proof of the Schur positivity of Schur $P$-polynomials independent of shifted insertion.

To begin, we define a reading word for shifted tableaux as follows.

\begin{definition}
  For $T$ a shifted tableau, the \emph{hook reading word of $T$}, denoted by $w(T)$, is the word obtained by reading the marked entries of $T$ up the $i$th column, then the unmarked entries of $T$ along the $i$th row, left to right, for $i$ from $\max(\gamma_1,\ell(\gamma))$ to $1$.
  \label{def:hook-word}
\end{definition}

For example, the hook reading words for the three tableaux in Figure~\ref{fig:raise-B} are listed below the tableaux. By disregarding the marks, we can calculate $m_i(w(T))$ just as in the unshifted case. Note that our reading word is not the reading word used to define crystal operators in \cite{HPS17}. 

The row and column conditions for shifted tableaux ensure that the cells with entries $\st{i},i$ must form a ribbon, i.e. contain no $2\times 2$ block. This observation helps to make the following well-defined.

\begin{definition}
  The \emph{shifted lowering operators}, denoted by $\st{f}_i$, act on semistandard shifted tableaux by: $\st{f}_i(T)=0$ if $m_i(w(T)) \leqslant 0$; otherwise, letting $p$ be the smallest index such that $m_i(w(T),p) = m_i(w(T))$, letting $x$ denote the entry of $T$ corresponding to $w_p$, and letting $y$ be the entry north of $x$ and $z$ the entry east of $x$, we have
    \begin{itemize}
    \item[(L1)] \begin{itemize}
      \item[(a)] if $x=i$ and $z=\st{i+1}$, then $\st{f}_i$ changes $x$ to $\st{i+1}$ and changes $z$ to $i+1$;
      \item[(b)] else if $x=i$ and $y$ does not exist or $y > i+1$, then $\st{f}_i$ changes $x$ to $i+1$;
      \item[(c)] else if $x=i$ and the northeastern-most cell on the $(i+1)$-ribbon containing $y$  has a marking, then $\st{f}_i$ removes that marking and changes $x$ to $\st{i+1}$;
      \item[(d)] else if $x=i$, then $\st{f}_i$ changes $x$ to $\st{i+1}$;
      \end{itemize}

      \begin{figure}[ht]
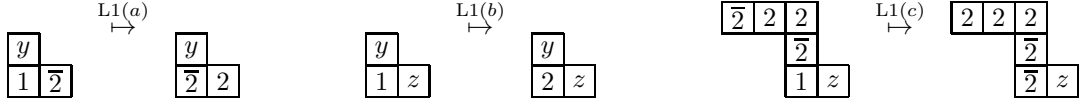

        \begin{displaymath}
          \begin{array}{ccc@{\hskip 4\cellsize}ccc@{\hskip 4\cellsize}ccc}
            \tableau{\\ y \\ 1 & \st{2} } & \stackrel{\text{L}1(a)}{\mapsto} & \tableau{\\ y \\ \st{2} & 2 } &
            \tableau{\\ y \\ 1 & z }      & \stackrel{\text{L}1(b)}{\mapsto} & \tableau{\\ y \\ 2 & z } &
            \tableau{\st{2} & 2 & 2 \\ & & \st{2} \\ & & 1 & z } & \stackrel{\text{L}1(c)}{\mapsto} &
            \tableau{ 2 & 2 & 2 \\ & & \st{2} \\ & & \st{2} & z }  
          \end{array}
        \end{displaymath}
        \caption{\label{fig:shifted-lower-1}An illustration of the shifted lowering operators with $x=i=1$.}
      \end{figure}

    \item[(L2)] \begin{itemize}
      \item[(a)] if $x=\st{i}$ and $y = i$, then $\st{f}_i$ changes $x$ to $i$ and changes $y$ to $\st{i+1}$;
      \item[(b)] else if $x=\st{i}$ and $z$ does not exist or $z > \st{i+1}$, then $\st{f}_i$ changes $x$ to $\st{i+1}$;
      \item[(c)] else if $x=\st{i}$, then $\st{f}_i$ changes $x$ to $i$ and changes the first entry $i$ southwest along the $i$-ribbon containing $x$ that is not followed by $i$ or $\st{i+1}$ to $\st{i+1}$.
      \end{itemize}

      \begin{figure}[ht]
        \begin{displaymath}
          \begin{array}{ccc@{\hskip 4\cellsize}ccc@{\hskip 4\cellsize}ccc}
            \tableau{ 1 \\ \st{1} & z } & \stackrel{\text{L}2(a)}{\mapsto} & \tableau{ \st{2} \\ 1 & z } &
            \tableau{ y \\ \st{1} & z } & \stackrel{\text{L}2(b)}{\mapsto} & \tableau{ y \\ \st{2} & z } &
            \tableau{ y \\ \st{1} & 1 & 1 & \st{2} \\ & & \st{1} & 1 } & \stackrel{\text{L}2(c)}{\mapsto} &
            \tableau{ y \\ 1 & 1 & 1 & \st{2} \\ & & \st{1} & \st{2} } 
          \end{array}
        \end{displaymath}
        \caption{\label{fig:shifted-lower-2}An illustration of the shifted lowering operators with $x=\st{i}=\st{1}$.}
      \end{figure}

    \end{itemize}
  \label{def:shifted-lower}
\end{definition}
      
The rules for the shifted lowering operators are illustrated case by case in Figures~\ref{fig:shifted-lower-1} and \ref{fig:shifted-lower-2}. Figure~\ref{fig:raise-B} shows the lowering operators applied to the left and middle tableaux, where the changed cells are indicated with a circle. This is, in fact, the complete $4$-string through these tableaux. A complete example on $\SSHT_3((3,1,0))$ is shown in Figure~\ref{fig:P31-A}.

\begin{figure}[ht]
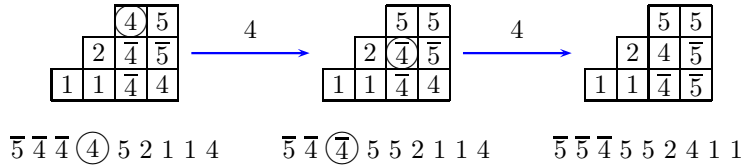

  \begin{center}
    \begin{displaymath}
      \begin{array}{c@{\hskip 2\cellsize}c@{\hskip 2\cellsize}c}
        \rnode{A}{\tableau{& & \cir{4} & 5 \\ & 2 & \st{4} & \st{5} \\ 1 & 1 & \st{4} & 4}} &
        \rnode{B}{\tableau{& & 5 & 5 \\ & 2 & \cir{\st{4}} & \st{5} \\ 1 & 1 & \st{4} & 4}} &
        \rnode{C}{\tableau{& & 5 & 5 \\ & 2 & 4 & \st{5} \\ 1 & 1 & \st{4} & \st{5}}} \\ \\
        \st{5} \ \st{4} \ \st{4} \ \raisebox{-.5ex}{$\cir{4}$} \ 5 \ 2 \ 1 \ 1 \ 4 &
        \st{5} \ \st{4} \ \raisebox{-.5ex}{$\cir{\st{4}}$} \ 5 \ 5 \ 2 \ 1 \ 1 \ 4 &
        \st{5} \ \st{5} \ \st{4} \ 5 \ 5 \ 2 \ 4 \ 1 \ 1 
      \end{array}
      \psset{nodesep=5pt,linewidth=.2ex}
      \ncline[linecolor=blue]{->} {A}{B} \naput{4}
      \ncline[linecolor=blue]{->} {B}{C} \naput{4}
    \end{displaymath}
    \caption{\label{fig:raise-B}A complete $4$-string on semistandard shifted tableaux of shape $(4,3,2)$, with the hook reading word indicated below.}
  \end{center}
\end{figure}

Note that the total number of marked cells stays the same except for Case (1)(d), where it increases by one. From the definition, it is not obvious that these operators are well-defined nor that the result is again a semistandard shifted tableau. However, both are indeed the case, as will be shown in Theorem~\ref{thm:shifted-well}.

\begin{figure}[ht]
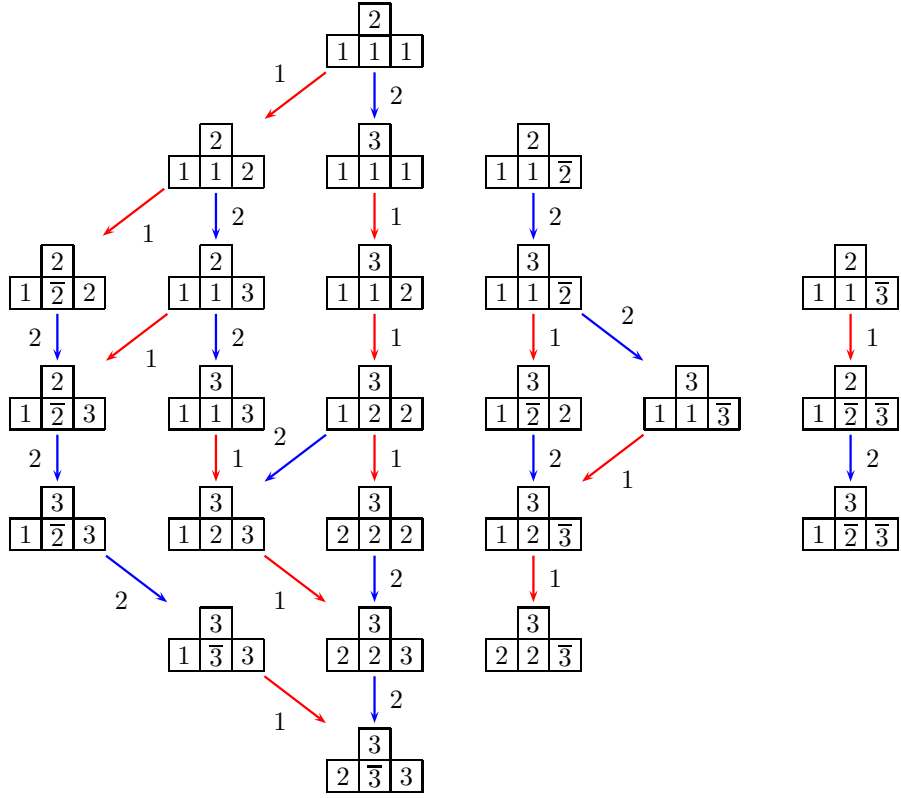

  \begin{center}
    \begin{displaymath}
      \begin{array}{c@{\hskip 2\cellsize}c@{\hskip 2\cellsize}c@{\hskip 2\cellsize}c@{\hskip 2\cellsize}c@{\hskip 2\cellsize}c}
        & & \rnode{c1}{\tableau{ & 2 \\ 1 & 1 & 1}} & & & \\[7ex]
        & \rnode{b2}{\tableau{ & 2 \\ 1 & 1 & 2}} & \rnode{c2}{\tableau{ & 3 \\ 1 & 1 & 1}} & \rnode{d2}{\tableau{ & 2 \\ 1 & 1 & \st{2}}} & & \\[7ex]
        \rnode{a3}{\tableau{ & 2 \\ 1 & \st{2} & 2}} & \rnode{b3}{\tableau{ & 2 \\ 1 & 1 & 3}} & \rnode{c3}{\tableau{ & 3 \\ 1 & 1 & 2}} & \rnode{d3}{\tableau{ & 3 \\ 1 & 1 & \st{2}}} & & \rnode{f3}{\tableau{ & 2 \\ 1 & 1 & \st{3}}}\\[7ex]
        \rnode{a4}{\tableau{ & 2 \\ 1 & \st{2} & 3}} & \rnode{b4}{\tableau{ & 3 \\ 1 & 1 & 3}} & \rnode{c4}{\tableau{ & 3 \\ 1 & 2 & 2}} & \rnode{d4}{\tableau{ & 3 \\ 1 & \st{2} & 2}} & \rnode{e4}{\tableau{ & 3 \\ 1 & 1 & \st{3}}} & \rnode{f4}{\tableau{ & 2 \\ 1 & \st{2} & \st{3}}}\\[7ex]
        \rnode{a5}{\tableau{ & 3 \\ 1 & \st{2} & 3}} & \rnode{b5}{\tableau{ & 3 \\ 1 & 2 & 3}} & \rnode{c5}{\tableau{ & 3 \\ 2 & 2 & 2}} & \rnode{d5}{\tableau{ & 3 \\ 1 & 2 & \st{3}}} & & \rnode{f5}{\tableau{ & 3 \\ 1 & \st{2} & \st{3}}}\\[7ex]
        & \rnode{b6}{\tableau{ & 3 \\ 1 & \st{3} & 3}} & \rnode{c6}{\tableau{ & 3 \\ 2 & 2 & 3}} & \rnode{d6}{\tableau{ & 3 \\ 2 & 2 & \st{3}}} & & \\[7ex]
        & & \rnode{c7}{\tableau{ & 3 \\ 2 & \st{3} & 3}} & & & 
      \end{array}
      \psset{nodesep=2pt,linewidth=.2ex}
      \ncline[linewidth=.2ex,linecolor=red]{->} {c1}{b2} \nbput{1}
      \ncline[linecolor=blue]{->}  {c1}{c2} \naput{2}
      \ncline[offset=2pt,linewidth=.2ex,linecolor=red]{<-} {a3}{b2} \nbput{1}
      \ncline[linecolor=blue]{->}  {b2}{b3} \naput{2}
      \ncline[linewidth=.2ex,linecolor=red]{->} {c2}{c3} \naput{1}
      \ncline[linecolor=blue]{->}  {d2}{d3} \naput{2}
      \ncline[linecolor=blue]{->}  {a3}{a4} \nbput{2}
      \ncline[linewidth=.2ex,linecolor=red]{<-} {a4}{b3} \nbput{1} 
      \ncline[linecolor=blue]{->}  {b3}{b4} \naput{2}
      \ncline[linewidth=.2ex,linecolor=red]{->} {c3}{c4} \naput{1}
      \ncline[linewidth=.2ex,linecolor=red]{->} {d3}{d4} \naput{1}
      \ncline[linecolor=blue]{->}  {d3}{e4} \naput{2}
      \ncline[linewidth=.2ex,linecolor=red]{<-} {f4}{f3} \nbput{1} 
      \ncline[linecolor=blue]{->}  {a4}{a5} \nbput{2}
      \ncline[linewidth=.2ex,linecolor=red]{->} {b4}{b5} \naput{1}
      \ncline[linecolor=blue]{->}  {c4}{b5} \nbput{2}
      \ncline[linewidth=.2ex,linecolor=red]{<-} {c5}{c4} \nbput{1}
      \ncline[linecolor=blue]{->}  {d4}{d5} \naput{2}
      \ncline[linewidth=.2ex,linecolor=red]{->} {e4}{d5} \naput{1}
      \ncline[linecolor=blue]{->}  {f4}{f5} \naput{2}
      \ncline[linecolor=blue]{->}  {a5}{b6} \nbput{2}
      \ncline[linewidth=.2ex,linecolor=red]{->}{b5}{c6} \nbput{1}
      \ncline[linecolor=blue]{->}  {c5}{c6} \naput{2}
      \ncline[linewidth=.2ex,linecolor=red]{<-} {d6}{d5} \nbput{1}
      \ncline[linewidth=.2ex,linecolor=red]{->}{b6}{c7} \nbput{1}
      \ncline[linecolor=blue]{->}  {c6}{c7} \naput{2}
    \end{displaymath}
    \caption{\label{fig:P31-A}The crystal structure on semistandard shifted tableaux of shape $(3,1)$ with entries $\{\st{1},1,\st{2},2,\st{3},3\}$ and no marks on the main diagonal.}
  \end{center}
\end{figure}

In order to analyze how the shifted crystal operators change the lengths of monochromatic paths, we introduce the notion of \emph{blocked} entries of a semistandard shifted tableau $T$, determined via the hook reading word $w(T)$. Blocking will depend upon a parameter $i$, for $1 \leq i < n$, and for $i$-blocking, only entries $\st{i},i,\st{i+1},i+1$ will be considered.

\begin{definition}
  Let $i \geq 1$ be an index and $T$ a semistandard shifted tableau.  A pair of entries $y,x$ of $T$, with $y \in \{\st{i+1},i+1\}$ and $x \in \{\st{i},i\}$ are \emph{$i$-paired} if $y$ occurs before $x$ in the hook reading word of $T$ and every entry $\st{i}, i, \st{i+1}, i+1$ that lies between them in the hook reading word is $i$-blocked. The $i$-paired entries become $i$-blocked, and we continue the operation recursively till there are no more possible $i$-pairs. An entry $\st{i},i,\st{i+1},i+1$ of $T$ is \emph{$i$-free} if it is not part of an $i$-pair.
  \label{def:block}
\end{definition}

Figure~\ref{fig:free-B} shows the $4$-blocked entries of the hook reading word paired with under brackets and the $4$-free entries are indicated in red. Notice that for each tableau, there are exactly two $4$-free entries.

\begin{figure}[ht]
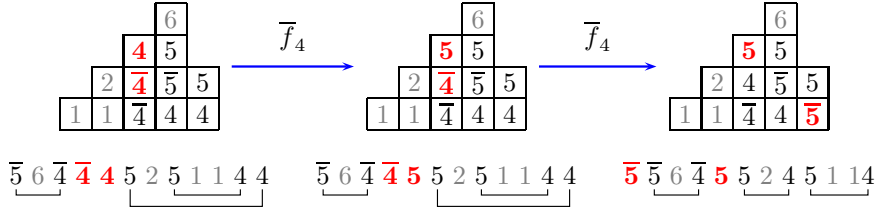

  \begin{center}
    \begin{displaymath}
      \begin{array}{c@{\hskip 1.5\cellsize}c@{\hskip 1.5\cellsize}c}
        \rnode{A}{\tableau{& & & {\gray 6} \\ & & \mathbf{\red 4} & 5 \\ & {\gray 2} & \mathbf{\red\st{4}} & \st{5} & 5 \\ {\gray 1} & {\gray 1} & \st{4} & 4 & 4}} &
        \rnode{B}{\tableau{& & & {\gray 6} \\ & & \mathbf{\red 5} & 5 \\ & {\gray 2} & \mathbf{\red\st{4}} & \st{5} & 5 \\ {\gray 1} & {\gray 1} & \st{4} & 4 & 4}} &
        \rnode{C}{\tableau{& & & {\gray 6} \\ & & \mathbf{\red 5} & 5 \\ & {\gray 2} & 4 & \st{5} & 5 \\ {\gray 1} & {\gray 1} & \st{4} & 4 & \mathbf{\red\st{5}}}} \\ \\
        \rnode{a1}{\st{5}} \ {\gray 6} \ \rnode{b1}{\st{4}} \ \mathbf{\red\st{4}} \ \mathbf{\red 4} \ \rnode{c1}{5} \ {\gray 2} \ \rnode{d1}{5} \ {\gray 1 \ 1} \ \rnode{e1}{4} \ \rnode{f1}{4} &
        \rnode{a2}{\st{5}} \ {\gray 6} \ \rnode{b2}{\st{4}} \ \mathbf{\red\st{4}} \ \mathbf{\red 5} \ \rnode{c2}{5} \ {\gray 2} \ \rnode{d2}{5} \ {\gray 1 \ 1} \ \rnode{e2}{4} \ \rnode{f2}{4} &
        \mathbf{\red\st{5}} \ \rnode{a3}{\st{5}} \ {\gray 6} \ \rnode{b3}{\st{4}} \ \mathbf{\red 5} \ \rnode{c3}{5} \ {\gray 2} \ \rnode{d3}{4} \ \rnode{e3}{5} \ {\gray 1 \ 1 } \rnode{f3}{4} 
      \end{array}
      \psset{nodesep=5pt,linewidth=.2ex}
      \ncline[linecolor=blue]{->} {A}{B} \naput{\st{f}_4}
      \ncline[linecolor=blue]{->} {B}{C} \naput{\st{f}_4}
      \ncdiag[linewidth=.1ex,nodesep=2pt,angleA=-90,angleB=-90,arm=0.5ex] {a1}{b1} 
      \ncdiag[linewidth=.1ex,nodesep=2pt,angleA=-90,angleB=-90,arm=1.5ex] {c1}{f1} 
      \ncdiag[linewidth=.1ex,nodesep=2pt,angleA=-90,angleB=-90,arm=0.5ex] {d1}{e1} 
      \ncdiag[linewidth=.1ex,nodesep=2pt,angleA=-90,angleB=-90,arm=0.5ex] {a2}{b2} 
      \ncdiag[linewidth=.1ex,nodesep=2pt,angleA=-90,angleB=-90,arm=1.5ex] {c2}{f2} 
      \ncdiag[linewidth=.1ex,nodesep=2pt,angleA=-90,angleB=-90,arm=0.5ex] {d2}{e2} 
      \ncdiag[linewidth=.1ex,nodesep=2pt,angleA=-90,angleB=-90,arm=0.5ex] {a3}{b3} 
      \ncdiag[linewidth=.1ex,nodesep=2pt,angleA=-90,angleB=-90,arm=0.5ex] {c3}{d3} 
      \ncdiag[linewidth=.1ex,nodesep=2pt,angleA=-90,angleB=-90,arm=0.5ex] {e3}{f3} 
    \end{displaymath}
    \caption{\label{fig:free-B}A $4$-string on $\SSHT_6(5,4,2,1)$, with $4$-blocked entries bracketed and $4$-free entries indicated in red.}
  \end{center}
\end{figure}

We show below that the $i$-free entries of $T$ occur with all entries $\st{i},i$ preceding all entries $\st{i+1},i+1$ in the hook reading word $w(T)$. This implies the following alternative characterization for the entry of $T$ on which $\st{f}_i$ acts.

\begin{lemma}
  For $T$ a semistandard shifted tableau and $i \geq 1$ an index, $m_i(w(T))$ is number of $i$-free entries $\st{i}, i$ of $T$, and if $m_i(w(T)) > 0$, then the smallest index $p$ for which $m_i(w(T),p) = m_i(w(T))$ occurs at the rightmost $i$-free entry $\st{i}, i$.
  \label{lem:free}
\end{lemma}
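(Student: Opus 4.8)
The plan is to reduce the entire statement to the classical theory of parenthesis matching on a two-letter word. First I would observe that both quantities in play depend only on the subword of $w(T)$ formed by the entries in $\{\st{i},i,\st{i+1},i+1\}$, with the marks disregarded: the value $m_i(w(T),r)$ is unaffected by entries outside this set, since they change neither the count of $i$'s nor of $(i+1)$'s, and Definition~\ref{def:block} only ever inspects entries $\st{i},i,\st{i+1},i+1$ when deciding whether a pair is $i$-paired. Encoding each entry $\st{i},i$ as a close bracket $)$ and each entry $\st{i+1},i+1$ as an open bracket $($, I obtain a word $u$ in two symbols, and I would set $s(r)$ to be the number of closes minus the number of opens among the first $r$ symbols of $u$, so that $m_i(w(T)) = \max(0,\max_r s(r))$.

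Next I would identify the recursive $i$-blocking of Definition~\ref{def:block} with the standard non-crossing matching of $u$. Since an $i$-pair consists of an open $y\in\{\st{i+1},i+1\}$ occurring before a close $x\in\{\st{i},i\}$ with all intervening relevant entries already blocked, matching innermost pairs first produces exactly the unique non-crossing matching of brackets in which every open is paired with a strictly later close. From the classical analysis of this matching I would record two facts: (i) in a maximal non-crossing matching no unmatched open can precede an unmatched close (else they would form a matchable innermost pair), so all $i$-free closes precede all $i$-free opens in $u$, equivalently the free $\st{i},i$ precede the free $\st{i+1},i+1$ in $w(T)$, which proves the assertion made just before the lemma; and (ii) an occurrence of a close is $i$-free exactly when it sets a strict new record maximum of $s$ (with the convention $s(0)=0$), so the number of $i$-free closes up to any point equals the running maximum of $s$.

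Assembling these, the first claim is immediate: by (ii) the total number of $i$-free entries $\st{i},i$ equals $\max_r s(r)$, which is $m_i(w(T))$ under the hypothesis $m_i(w(T))>0$. For the second claim, the smallest index $p$ with $m_i(w(T),p)=m_i(w(T))=M$ is the first position at which $s$ attains its global maximum $M$; this is the last record maximum, hence by (ii) the rightmost $i$-free close. Finally I would note that this index sits on an entry $\st{i},i$ rather than on an irrelevant entry, since $s$ strictly increases there from $M-1$ to $M$; reinstating the discarded entries and marks therefore leaves the location of this maximizing position, viewed as a cell of $T$, unchanged.

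The routine but essential point requiring care is the verification that the recursive blocking of Definition~\ref{def:block} really coincides with non-crossing bracket matching, together with the bookkeeping needed to transfer statements about the reduced word $u$ back to positions in the full hook reading word $w(T)$, so that ``rightmost'' and ``smallest index'' refer to the intended cells of $T$. Once this reduction is in place, facts (i) and (ii) are elementary properties of balanced-parenthesis words, and no genuinely hard estimate is involved.
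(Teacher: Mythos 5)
Your proposal is correct and takes essentially the same route as the paper: both arguments rest on identifying the recursive $i$-blocking with the nested (non-crossing) matching of $\st{i+1},i+1$ against $\st{i},i$, observing that a blocked pair contributes nothing to the running statistic $m_i(w(T),r)$, and concluding that free closes precede free opens so that the first global maximum of $m_i$ lands on the rightmost $i$-free $\st{i},i$. Your bracket-word and record-maximum packaging is a clean reformulation of the paper's direct argument on the hook reading word, but introduces no genuinely new idea or decomposition.
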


\begin{proof}
  We claim all $i$-free entries $\st{i},i$ precede all $i$-free entries $\st{i+1},i+1$ in the hook reading word $w(T)$. To see this, note that $i$-paired entries are nested, and no $i$-free entry may occur between an $i$-pair. Therefore, if $y,x$ are $i$-free entries with $y \in \{\st{i+1},i+1\}$, $x\in \{\st{i},i\}$ and $y$ preceding $x$ in $w(T)$ such that no intermediate entries are $i$-free, then all intermediate entries $\st{i},i,\st{i+1},i+1$ are paired with another entry between $y$ and $x$, therefore $y$ and $x$ have the same number of entries equal to $\st{i+1},i+1$ as equal to $\st{i},i$ between them in $w(T)$, contradicting that $y,x$ are free. Thus no $i$-free entry $\st{i+1},i+1$ may precede an $i$-free entry $\st{i},i$ in $w(T)$, establishing the claim.

  Recall that $m_i(w(T)) = \max_r(m_i(w(T),r))$ is positive if and only if $\st{f}_i(T) \neq 0$, and in this case the smallest index $p$ for which $m_i(w(T),p) = m_i(w(T))$ occurs at an entry $\st{i}$ or $i$. By the previous claim, if $r<s$ are the indices in $w(T)$ of an $i$-blocked pair, then $m_i(w(T),r-1) = m_i(w(T),s)$. Therefore the smallest index $p$ for which $m_i(w(T),p) = m_i(w(T))$ must occur at the rightmost $i$-free entry $\st{i}$ or $i$. Furthermore, since $w(T)_p$ cannot lie between any $i$-blocked pair, we have $m_i(w(T),p)$ is the number of $i$-free entries $\st{i}$ or $i$ weakly preceding $w(T)_p$, which by the previous claim, is simply the number of $i$-free entries $\st{i}$ or $i$ of $T$.
\end{proof}

Using blocked and free entries, we now prove the shifted lowering operators are well-defined. The proof is a case by case analysis that acting as prescribed in Definition~\ref{def:shifted-lower} results in a semistandard shifted tableau.

\begin{theorem}
  For any strict partition $\gamma$, the shifted lowering operators $\{\st{f}_i\}_{1 \leq i < n}$ are well-defined maps $\st{f}_i : \SSHT_n(\gamma) \rightarrow \SSHT_n(\gamma) \cup \{0\}$. 
  \label{thm:shifted-well}
\end{theorem}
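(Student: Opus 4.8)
The plan is to prove two things: that the recipe in Definition~\ref{def:shifted-lower} determines $\st{f}_i(T)$ unambiguously, and that whenever $\st{f}_i(T) \neq 0$ the result is again an element of $\SSHT_n(\gamma)$. For the first point I would invoke Lemma~\ref{lem:free}: when $m_i(w(T)) > 0$, the index $p$ occurs at the rightmost $i$-free entry equal to $\st{i}$ or $i$, so the cell $x$ on which $\st{f}_i$ acts is uniquely determined, and since $x \in \{\st{i}, i\}$ we fall into exactly one of the families (L1) or (L2). Within each family the subcases form an exhaustive if/else-if cascade on mutually exclusive conditions, so there is no ambiguity once each prescribed move is seen to be performable. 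Thus the real content is (a) that the cells and ribbons referenced in cases (L1c), (L2a), and (L2c) genuinely exist, and (b) that the four defining conditions of a semistandard shifted tableau---weakly increasing rows and columns under $\st{1}<1<\st{2}<2<\cdots$, at most one $\st{i}$ per row, at most one unmarked $i$ per column, and no marks on the diagonal---are preserved. Because each move only replaces an $i$ or $\st{i}$ by $i+1$ or $\st{i+1}$ and $i<n$, the shape $\gamma$ and the bound on the largest entry are automatically preserved, so only these four conditions require work.

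Before the case analysis I would record the local structure around $x$ that is forced by the semistandardness of $T$ together with the freeness of $x$. The entries equal to $\st{i}$ or $i$ form a ribbon, as do those equal to $\st{i+1}$ or $i+1$; this is the observation made just before Definition~\ref{def:shifted-lower}, and it is what makes the ribbon moves in (L1c) and (L2c) well-posed. The key constraint is that, by the claim established in the proof of Lemma~\ref{lem:free}, all $i$-free entries $\st{i}, i$ precede all $i$-free entries $\st{i+1}, i+1$ in $w(T)$, with $x$ the rightmost of the former. Reading this against the hook reading order of Definition~\ref{def:hook-word} lets me pin down the admissible values of the north neighbor $y$ and the east neighbor $z$ of $x$ in each case, and in particular lets me rule out the configurations that would otherwise place a mark on a diagonal cell.

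With this in hand I would treat the seven cases. The simple replacements (L1b) and (L2b) are direct local checks: replacing $x=i$ by $i+1$ when $y$ is absent or $y>i+1$ keeps the column weakly increasing, and symmetrically for (L2b) from the hypothesis on $z$. The adjacent-swap cases (L1a) and (L2a) alter a single domino---for instance $\st{f}_i$ sends the horizontal pair $i, \st{i+1}$ to $\st{i+1}, i+1$ in (L1a)---so I would check that the two new entries are compatible with their remaining neighbors and that the at-most-one-mark-per-row and at-most-one-unmarked-$i$-per-column conditions survive. The genuinely delicate cases are the ribbon propagations (L1c) and (L2c), where $x$ is changed while a mark is relocated along the $(i+1)$-ribbon through $y$ (resp.\ along the $i$-ribbon through $x$). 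For these I would first argue that the relevant ribbon is well-defined---in (L1c) and (L1d) the column condition forces $y \neq i$ and hence $y \in \{\st{i+1}, i+1\}$, so the $(i+1)$-ribbon through $y$ exists, while in (L2c) the $i$-ribbon through $x$ must contain an entry $i$ to its southwest that is not immediately followed by $i$ or $\st{i+1}$---and then verify that sliding the mark preserves all four conditions at both ends of the modified ribbon and along its interior.

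I expect the ribbon cases (L1c) and (L2c) to be the main obstacle, since there the change is non-local: one must verify simultaneously that rows and columns stay weakly increasing, that no row acquires a second $\st{i}$ or second $\st{i+1}$ and no column a second unmarked $i$, and that no mark migrates onto the diagonal. The freeness of $x$ supplied by Lemma~\ref{lem:free} is precisely what guarantees that the correct end of each ribbon is available for the move, and translating that freeness, through the hook reading order, into the needed local statement about the ribbon boundary is where the bulk of the careful bookkeeping will lie.
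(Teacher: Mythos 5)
Your plan follows essentially the same route as the paper's proof: a case-by-case verification of L1(a)--(d) and L2(a)--(c), using the pairing structure behind Lemma~\ref{lem:free} to constrain the admissible neighbors of $x$ and to rule out illegal marks on the main diagonal. One small recalibration: in the paper the bulk of the work lands not on the ribbon cases but on L1(a), L2(a), and the existence claim in L2(c), where excluding $x$ (or $y$) from the main diagonal and pinning down the cell adjacent to $z$ requires an iterated pairing argument that terminates only because the shape is finite --- exactly the kind of bookkeeping your last paragraph anticipates, just attached to different cases.
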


\begin{proof}
  Let $T$ be a semistandard shifted tableaux and assume $m_i(w(T)) > 0$ so that $\st{f}_i$ acts non-trivially on $T$. Let $x, y, z$ be as in Definition~\ref{def:shifted-lower}, and let their indices in the hook reading word be $p_x, p_y, p_z$, respectively. By definition, $p_x$ is the smallest number satisfying $m_i(w(T),p) = m_i(w(T))$. By Lemma~\ref{lem:free}, every cell labeled $\st{i+1}$ or $i+1$ that precedes $x$ in $w(T)$ is $i$-paired with some cell labeled $\st{i}/i$  that lies between itself and $x$. Similarly, every cell labeled $\st{i}/i$ that follows $x$ in $w(T)$ is $i$-paired with some cell labeled $\st{i+1}/i+1$ that lies between $x$ and itself. In particular, if $x$ has label $i$, then $z$ cannot also have label $i$ since it follows immediately after $x$ in $w(T)$, and if $x$ has label $\st{i}$ then $y$ cannot have label $\st{i}$ for exactly the same reason.

  \emph{Case L1(a)}: We must verify two things to show that the shifted tableaux rules are not violated: $x$ is not on a main diagonal, and the cell above $z$ is not $\st{i+1}/i+1$. We first show that $x$ can not be on the main diagonal. As $z=\st{i+1}$, it needs to be $i$-paired with a cell labeled $\st{i}/i$ after it and before $x$ in the reading word, which can only happen in the column of $x$ (Figure \ref{fig:lower-proof-1a}, left). Assume there are $t$ cells labeled $\st{i}$ in the column of $x$. The cells left adjacent to those can only be labeled $i$ or $\st{i+1}$ as columns and rows weakly increase. Furthermore, every $\st{i+1}$ in the column of $z$ needs to be paired with a $\st{i}$ between $x$ and itself, so at most $t-1$ of these can be $\st{i+1}$, and the bottom one most must be labeled $i$. Let us call this cell $x'$ (Figure \ref{fig:lower-proof-1a}, middle). As $x'=i$  it must be $i$-paired with some $i+1$ between $x$ and $x'$ in the reading word ($\st{i+1}$ never comes after $i$). By the tableaux rules, such a cell must be on the row above. If there are $t'$ such cells labeled $i$, they are $i$-paired with $t'$ cells labeled $i+1$ on the row above, which means the leftmost $i$ (say $x''$) is followed by an $\st{i+1}$. To $i$-pair this $\st{i+1}$, the cell under $x''$ must be labeled $\st{i}$, which takes us back to our starting point (see Figure \ref{fig:lower-proof-1a}, right). Since our shape finite, this case can not happen.
  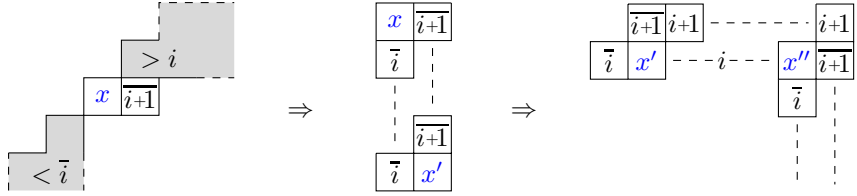
\begin{figure}[ht]
    \begin{center}
      \begin{tikzpicture}[scale=.5]
	\fill[gray!30!white] (3,3)--(3,4)--(4,4)--(4,3)--(3,3) (4,3)--(4,5)--(6,5)--(6,3)--(4,3);
	\fill[gray!30!white] (2,2)--(2,0)--(0,0)--(0,1)--(1,1)--(1,2)--(2,2);
	\draw (0,1)--(1,1)--(1,2)--(2,2)--(2,3)--(3,3)--(3,4)--(4,4) (2,2)--(2,1);
	\draw[dashed] (0,0)--(0,1) (4,4)--(4,5)--(6,5) (5,3)--(6,3) (2,1)--(2,0);
	\draw (3,3)--(5,3);
	
	\node at (1.1, .5) {$<\st{i}$};
	\node at (4,3.5) {$>i$};
	\node[blue]  at (2.5,2.5) {$x$};
	\node at (3.5, 2.5) {$\st{i\texplus1}$};
	\draw (2,2)--(4,2)--(4,3) (3,2)--(3,3);
	\end{tikzpicture}\qquad \raisebox{1cm}{$\Rightarrow$} \qquad
		\begin{tikzpicture}[scale=.5]
	\draw (2,2)--(2,3)--(4,3);

	\node[blue]  at (2.5,2.5) {$x$};
		\node at (2.5,1.5) {$\st{i}$};
		\draw (2,2)--(2,1)--(3,1)--(3,2);
	\draw[dashed](2.5,0.8)--(2.5,-.8)(3.5,1.8)--(3.5,.2);
	\node at (3.5, 2.5) {$\st{i\texplus1}$};
	\draw (2,-1)--(4,-1)--(4,-2)--(2,-2)--(2,-1) (4,-1)--(4,0)--(3,0)--(3,-2);
	\node at (3.5,-.5) {$\st{i\texplus1}$};
 \node at (2.5,-1.5) {$\st{i}$};
	\node[blue] at (3.5,-1.5) {$x'$};
	\draw (2,2)--(4,2)--(4,3) (3,2)--(3,3);
	\end{tikzpicture}\qquad \raisebox{1cm}{$\Rightarrow$}\qquad
		\begin{tikzpicture}[scale=.5]
	\draw (7,0)--(8,0)--(8,-1);
	\draw (2,0)--(2,-2)--(3,-2)--(3,-1)--(4,-1)--(4,0)--(2,0) (3,0)--(3,-1)--(2,-1);
	\node at (3.5,-.5) {${i\texplus1}$};
 \node[blue] at (2.5,-1.5) {$x'$};
	\node at (2.5,-0.5) {$\st{i\texplus1}$};
\draw[white] (2,-5)--(3,-5);
	\node at (1.5,-1.5) {${\st{i}}$};
	\draw (2,-1)--(1,-1)--(1,-2)--(2,-2);
	\draw[dashed](3.2,-1.5)--(4.3,-1.5) (4.7,-1.5)--(5.8,-1.5) (4.2,-.5)--(6.8,-.5);
	\node at (4.5, -1.5) {$i$};
		\draw (6,-1)--(8,-1)--(8,-2)--(6,-2)--(6,-1) (7,0)--(7,-2);
	\node at (7.5,-.5) {$i\texplus1$};
 \node[blue] at (6.5,-1.5) {$x''$};
  \node at (6.5,-2.5) {$\st{i}$};
  \draw (6,-2)--(6,-3)--(7,-3)--(7,-2);
	\node at (7.5,-1.5) {$\st{i\texplus1}$};
	\draw[dashed](6.5,-3.2)--(6.5,-4.8)(7.5,-2.2)--(7.5,-4.8);
	        \end{tikzpicture}
	        \caption{Case L1(a), showing $x$ can not be on the main diagonal.\label{fig:lower-proof-1a}}
    \end{center}
    \end{figure}
  The second step is to show that the cell above $z$ can not be labeled $\st{i+1}/i+1$. We can eliminate the $\st{i+1}$ case easily, as $x$ is not on the main diagonal, there is a cell above $x$ and there is no valid way to fill it (Figure \ref{fig:lower-proof-1a2}, left). Assume the cell above $z$ is labeled $i+1$. As it is not on the main diagonal, any $\st{i}$ comes after it in the reading word, its $i$-pair is an entry $i$ between $x$ and itself, so there needs to be another cell labeled $i$ to the left of $x$. If $i$ is not on the main diagonal, then it has a cell above it that is labeled at least $\st{i+1}$, so the cell above $x$ also must be $i+1$, and we need another $i$ to pair it. This continues till we hit the main diagonal, and the $i+1$ on the diagonal still need an $i$-pair. As there is no more place left for an $i$, its $i$-pair must be an entry $\st{i}$ after it on the reading word, which can only happen in the column to the left. This leads to a contradiction as there is no way to fill the cell to the right of $\st{i}$ (see Figure \ref{fig:lower-proof-1a2}, right). 
  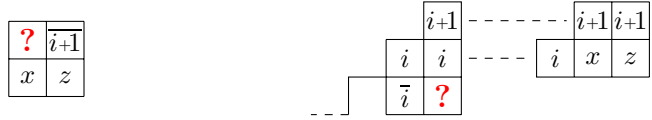
\begin{figure}[ht]\begin{center}
      \begin{tikzpicture}[scale=.5]
	\draw (0,0) grid (2,2);
        \node[red] at (.5,1.5) {\Large \textbf{?}};
        \node at (.5,.5) {$x$};
        \node at (1.5,.5) {$z$};
        \node at (1.5,1.5) {$\st{i\texplus1}$};
        \draw[white] (0,-.5)--(1,-.5) (0,2.5)--(1,2.5);
      \end{tikzpicture}
      \qquad\qquad\qquad \qquad
      \begin{tikzpicture}[scale=.5]
	\draw (0,0) grid (2,2);
	\draw (1,2)--(1,3)--(2,3)--(2,2) (0,1)--(-1,1)--(-1,0);
	\draw[dashed] (-1,0)--(-2,0) (2.2,1.5)--(3.8,1.5) (2.2,2.5)--(4.8,2.5);
	\node[red] at (1.5,.5) {\Large \textbf{?}};
        \node at (1.5,1.5) {$i$};
        \node at (.5,1.5) {$i$};
        \node at (.5,.5) {$\st{i}$};
        \node at (1.5, 2.5) {$i\texplus 1$};
        \draw (5,1) grid (7,3);
        \draw (5,1)--(4,1)--(4,2)--(5,2);
        \node at (4.5,1.5) {$i$};
        \node at (5.5,1.5) {$x$};
        \node at (6.5,1.5) {$z$};
        \node at (5.5,2.5) {$i\texplus 1$};
        \node at (6.5,2.5) {$i\texplus1$};
      \end{tikzpicture}
      \caption{Case L1(a), showing  the  cell above $z$ can not be $i+1$ or $\st{i+1}$.\label{fig:lower-proof-1a2}}\end{center}
  \end{figure}

  \emph{Case L1(b)}: In this case we assume $z \geq i+1$ (since we are not in case L1(a)) and either $y$ does not exist or $y > i+1$, so there are no possible row or column violations that can arise from changing $x$ from $i$ to $i+1$. 

  \emph{Cases L1(c) and L1(d)}: For these cases, we assume $z \geq i+1$ (since we are not in case L1(a)) and $y = \st{i+1}/i+1$ (since we are not in case L1(b)), therefore there are no possible row or column violations that can arise from changing $x$ from $i$ to $\st{i+1}$. Let $u$ denote the entry at the head of the $(i+1)$-ribbon containing $y$. If $u = i+1$ (case L1(d)), then no change occurs here, and if $u = \st{i+1}$ (case L1(c)), then since the $(i+1)$-ribbon terminates at $u$, the cell above $u$ is larger than $i+1$, so removing the marking on $u$ cannot create a row or column violation either.

  \emph{Case L2(a)}: As there can be at most one $\st{i}$ on a row, changing $x$ from $\st{i}$ to $i$ does not create a row or column violation, therefore we only need to check that no violations arise in changing $y$ from $i$ to $\st{i+1}$. Note that $y$ can not be on the main diagonal, since otherwise $x$ would immediately precede $y$, giving $m_i(w(T),p_y)>m_i(w(T),p_x)$. So the only potential is when the cell right adjacent to $y$ is labeled $\st{i+1}$. Assume it is. Then it must be $i$-paired with a cell labeled $\st{i}$ that comes before $x$ in $w(T)$. However, this can only happen in the column of $x$, so the cell below $x$ must be $\st{i}$. Assume there are $k$ cells labeled $\st{i}$ below $x$. Then we can have at most $k$ cells labeled $\st{i+1}$ in the next column, so the bottom two $\st{i}$'s need to have $i$'s left adjacent to them, which can not happen by our column rules (Figure \ref{fig:lower_proof_2}, left).
  \begin{figure}[ht]
    \centering
    \begin{tikzpicture}[scale=.5]
      \draw (0,0) grid (2,1);
      \draw (1,1)--(1,2)--(2,2)--(2,1) (1,2)--(1,3)--(2,3)--(2,2) (0,1)--(0,2)--(1,2);
      \draw[dashed] (.5,2.2)--(.5,3.8) (1.5,3.2)--(1.5,4.8);
      \node at (.5,.5) {$\st{i}$};
      \node at (.5,1.5) {$\st{i}$};
      \node[red] at (1.5,.5) {\Large \textbf{?}};
      \node at (1.5,2.5) {$\st{i\texplus1}$};
      \node at (1.5,1.5) {$i$};
      \draw (0,6) grid (2,5);
      \draw (0,5)--(0,4)--(1,4)--(1,5);
      \node at (.5,4.5) {$x$};
      \node at (.5,5.5) {$i$};
      \node at (1.5,5.5) {$\st{i\texplus1}$};
    \end{tikzpicture}\qquad \qquad \qquad
    \begin{tikzpicture}[scale=.5]
      \draw (0,0) grid (2,1);
      \draw (1,1)--(1,2)--(2,2)--(2,1);
      \draw[dashed] (.5,1.2)--(.5,3.8) (1.5,2.2)--(1.5,4.8) (2.2,.5)--(3.8,.5) (-.2,5.5)--(-1.8,5.5);
      \draw (4,0) grid (5,1) (-2,5) grid (-4,6);
      \node at (-3.5, 5.5) {$x$};
      \node at (-2.5, 5.5) {$i$};
      \node at (4.5,.5) {$i$};
      \node at (.5,.5) {$\st{i}$};
      \node at (1.5,.5) {$i$};
      \node at (1.5,1.5) {$\st{i\texplus1}$};
      \draw (0,6) grid (2,5);
      \draw (0,5)--(0,4)--(1,4)--(1,5);
      \node at (.5,4.5) {$\st{i}$};
      \node at (.5,5.5) {$i$};
      \node at (1.5,5.5) {$\st{i\texplus1}$};
    \end{tikzpicture}\qquad \qquad \qquad
    \begin{tikzpicture}[scale=.5]
      \draw (0,0) grid (2,1);
      \draw (1,1)--(1,2)--(2,2)--(2,1);
      \draw[dashed] (.5,1.2)--(.5,3.8) (1.5,2.2)--(1.5,4.8) (2.2,.5)--(3.8,.5);
      \draw (4,0) grid (5,1);
      \node at (4.5,.5) {$i$};
      \node at (.5,.5) {$\st{i}$};
      \node at (1.5,.5) {$i$};
      \node at (1.5,1.5) {$\st{i\texplus1}$};
      \draw (0,6) grid (2,5);
      \draw (0,5)--(0,4)--(1,4)--(1,5);
      \node at (.5,4.5) {$\st{i}$};
      \node at (.5,5.5) {$x$};
      \node at (1.5,5.5) {$\st{i\texplus1}$};
    \end{tikzpicture}
    \caption{Case L2: $x=\st{i}$ \label{fig:lower_proof_2}}
  \end{figure}

  \emph{Case L2(b)}: We must have $y \geq \st{i+1}$ (since we are not in case L2(a)) and either $z$ does not exist or $z > \st{i+1}$, so there are no possible row or column violations that can arise from changing $x$ from $\st{i}$ to $\st{i+1}$. 

  \emph{Case L2(c)}: Since $y \geq \st{i+1}$ (since we are not in case L2(a)) no row or column violation can arise in changing $x$ from $\st{i}$ to $i$. Therefore we only need to verify that, in the $i$-ribbon containing $x$, there is an entry $i$ that is not followed by $i$ or $\st{i+1}$ as described, and that turning it into $\st{i+1}$ creates no violations. We have two cases based on the possible values of $z$ as $i$ or $\st{i+1}$ (since we are not in case L2(b)). We first consider the case $z=i$. If the rightmost $i$ in this row is not followed by $\st{i+1}$, we are done. If it is, then it needs to be $i$-paired with a cell marked $\st{i}$ between $x$ and itself in the reading word, which can only happen in the column of the rightmost $i$. Assume this column contains $k$ cells labeled $\st{i}$. Then there can be at most $k$ cells labeled $\st{i+1}$ in the next column, so the bottom-most one needs to be followed by an $i$. If the rightmost $i$ is not followed by $\st{i+1}$, we are done. If it is, we can repeat the same argument till we find such an $i$ (Figure \ref{fig:lower_proof_2}, middle). The case $z=\st{i+1}$ is similar, as $z$ needs to be $i$-paired with a cell labeled $\st{i}$ that comes after $x$ in the reading word, and that can only happen in the column of $x$. As above, the bottom $\st{i}$ in the column must be followed by an $i$, and we can continue until we find an $i$ not followed by $\st{i+1}$ (Figure \ref{fig:lower_proof_2}, right).
\end{proof}

In order to prove that the shifted lowering operators are invertible when the image is nonzero, we offer the following explicit rule for their partial inverses.

\begin{definition}
  The \emph{shifted raising operators}, denoted by $\st{e}_i$, act on semistandard shifted tableaux by: $\st{e}_{i}(T)=0$ if $m_i(w(T),k) = m_i(w(T))$; otherwise, letting $q$ be the largest index such that $m_i(w(T),q) = m_i(w(T))$, letting $x$ denote the entry of $T$ corresponding to $w_q$, and letting $y$ be the entry south of $x$ and $z$ the entry west of $x$, we have
    \begin{itemize}
    \item[(R1)] \begin{itemize}
    \item[(a)] if $x=i+1$ and $z=\st{i+1}$, then $\st{e}_i$ changes $x$ to $\st{i+1}$ and changes $z$ to $i$;
    \item[(b)] else if $x=i+1$ and $y$ does not exist or $y < i$, then $\st{e}_i$ changes $x$ to $i$
    \item[(c)] else if $x=i+1$ then $\st{e}_i$ changes $x$ to $\st{i+1}$ and changes the first entry $\st{i+1}$ southwest along the $i+1$-ribbon containing $x$ that is not above an $i$ or $\st{i+1}$ to $i$;
    \end{itemize}
      
      \begin{figure}[ht]
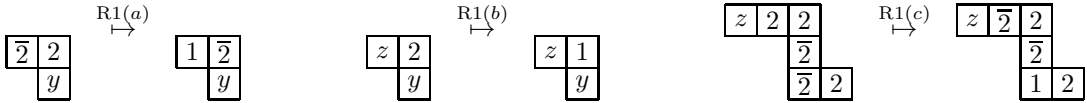

        \begin{displaymath}
          \begin{array}{ccc@{\hskip 4\cellsize}ccc@{\hskip 4\cellsize}ccc}
            \tableau{\\ \st{2}& 2 \\  & y } & \stackrel{\text{R}1(a)}{\mapsto} & \tableau{\\ 1 & \st{2} \\ & y } &
            \tableau{\\ z & 2 \\ & y }      & \stackrel{\text{R}1(b)}{\mapsto} & \tableau{\\ z &1 \\  & y } &
            \tableau{ z  & 2 & 2 \\ &  & \st{2} \\ &  & \st{2} &2 } & \stackrel{\text{R}1(c)}{\mapsto} &
            \tableau{ z & \st{2} & 2  \\  &  & \st{2}\\&  & 1 &2} 
          \end{array}
        \end{displaymath}
        \caption{\label{fig:shifted-raise-1}An illustration of the shifted raising operators with $x=i=1$.}
      \end{figure}
      
    \item[(R2)] \begin{itemize}
    \item[(a)] if $x=\st{i+1}$ and $y = i$, then $\st{e}_i$ changes $x$ to $i$ and changes $y$ to $\st{i}$;
    \item[(b)] else if $x=\st{i+1}$ and $z$ does not exist or $z < \st{i}$, then $\st{e}_i$ changes $x$ to $\st{i}$;
    \item[(c)] else if $x=\st{i+1}$, and the northeastern-most cell on the $(i)$-ribbon containing $z$ is not on the main diagonal, then $\st{e}_i$ adds a marking to that cell and changes $x$ to $\st{i}$;
    \item[(d)] else if $x=\st{i+1}$, then $\st{e}_i$ changes $x$ to $i$.
    \end{itemize}
      
      \begin{figure}[ht]
        \begin{displaymath}
          \begin{array}{ccc@{\hskip 4\cellsize}ccc@{\hskip 4\cellsize}ccc}
            \tableau{ z &  \st{2} \\ & 1 } & \stackrel{\text{R}2(a)}{\mapsto} & \tableau{ z & 1 \\  & \st{1} } &
            \tableau{ z& \st{2} \\  & y } & \stackrel{\text{R}2(b)}{\mapsto} & \tableau{ z& \st{1} \\  & y } &
            \tableau{ 1 \\ \st{1} & 1 & 1 & \st{2} \\ & & \st{1} &  y} & \stackrel{\text{R}2(c)}{\mapsto} &
            \tableau{ \st{1} \\ \st{1} & 1 & 1 & 1 \\ & &  & y }  
          \end{array}
        \end{displaymath}
        \caption{\label{fig:shifted-raise-2}An illustration of the shifted raising operators with $x=\st{i}=\st{1}$.}
      \end{figure}
      
    \end{itemize}
    \label{def:shifted-raise}
\end{definition}

The proof that the shifted raising operators are well-defined is completely analogous to that for the shifted lowering operators. Similarly, if $\st{e}_i(T) \neq 0$, then the largest index $q$ for which $m_i(w(T),q-1) = m_i(w(T))$ occurs at the leftmost $i$-free entry $\st{i+1}$ or $i+1$, parallel to Lemma~\ref{lem:free}. Details of both proofs are omitted.

\subsection{Verification of local axioms}
\label{sec:local-B}

To prove that our operators define a normal crystal, we begin by showing that they satisfy the conditions required to be a crystal.

\begin{theorem}
  For any strict partition $\gamma$, the shifted raising and lowering operators $\st{e}_i, \st{f}_i$ for $i=1,2,\ldots,r$ together with the usual weight map define a crystal on $\SSHT_{r+1}(\gamma)$.
  \label{thm:crystal}
\end{theorem}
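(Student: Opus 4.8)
The plan is to verify directly the two conditions of Definition~\ref{def:base-A}. Well-definedness of the operators as maps $\SSHT_{r+1}(\gamma) \to \SSHT_{r+1}(\gamma) \cup \{0\}$ is already supplied by Theorem~\ref{thm:shifted-well} for the $\st{f}_i$ and by its stated analogue for the $\st{e}_i$, so what remains is condition~(1)---that $\st{e}_i(T) = T'$ if and only if $\st{f}_i(T') = T$, with $\wt(T') = \wt(T) + \alpha_i$ in that case---together with condition~(2), the string-length identity $\downf_i(T) = (\wt(T)_i - \wt(T)_{i+1}) + \upe_i(T)$.

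I would dispatch the weight bookkeeping first, as it is immediate from the definitions. Since $\wt(T)_j$ counts both $j$ and $\st{j}$, inspection of the cases L1(a)--L2(c) of Definition~\ref{def:shifted-lower} shows that each converts exactly one entry of $\{\st{i}, i\}$ into an entry of $\{\st{i+1}, i+1\}$, while the auxiliary moves in the paired and ribbon cases only add, remove, or relocate markings and hence leave each affected cell in the same weight class. Thus $\wt(\st{f}_i(T)) = \wt(T) - \alpha_i$ in every case, which is exactly the weight relation demanded by condition~(1).

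Condition~(2) I would deduce from Lemma~\ref{lem:free} and its raising-operator analogue. Iterating $\st{f}_i$ shows $\downf_i(T) = m_i(w(T))$, which by Lemma~\ref{lem:free} is the number of $i$-free entries $\st{i}, i$; symmetrically $\upe_i(T)$ is the number of $i$-free entries $\st{i+1}, i+1$. Now in Definition~\ref{def:block} every $i$-pair consists of one entry of $\{\st{i+1}, i+1\}$ together with one of $\{\st{i}, i\}$, so the paired entries contribute equally to $\wt(T)_i$ and to $\wt(T)_{i+1}$. Hence $\wt(T)_i - \wt(T)_{i+1}$ equals the number of $i$-free $\st{i}, i$ minus the number of $i$-free $\st{i+1}, i+1$, that is $\downf_i(T) - \upe_i(T)$, which rearranges to condition~(2).

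The genuine work, and the main obstacle, is the edge-reversal half of condition~(1): that $\st{e}_i$ inverts $\st{f}_i$ and conversely. I would prove $\st{e}_i(\st{f}_i(T)) = T$ whenever $\st{f}_i(T) \neq 0$, the reverse identity $\st{f}_i(\st{e}_i(T)) = T$ being entirely symmetric. The crux is to show that the entry which $\st{f}_i$ newly creates with value $\st{i+1}$ or $i+1$ is precisely the leftmost $i$-free entry $\st{i+1}, i+1$ of $\st{f}_i(T)$, so that $\st{e}_i$ acts back at the same location; one then checks, clause by clause, that the case of $\st{e}_i$ triggered there reverses the case of $\st{f}_i$ that was applied. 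The routine cases are the local ones (L1(b), L1(d), L2(a), L2(b)), in which only the acting cell and at most one neighbor change. The delicate cases are the ribbon moves L1(a), L1(c), L2(c) and their raising counterparts, where the operator also alters a marked cell lying far along an $(i+1)$- or $i$-ribbon; for these I would argue, exactly as in the proof of Theorem~\ref{thm:shifted-well}, that the ribbon geometry together with the semistandard row and column inequalities pins down a unique such marked cell and that it is precisely the one the inverse operator revisits. Verifying that the set of $i$-free entries and the index of the acting cell are transported correctly through this matching then closes condition~(1) and completes the theorem.
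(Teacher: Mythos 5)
Your proposal is correct and follows essentially the same route as the paper: the paper likewise reduces condition (2) to identifying $\downf_i(T)$ and $\upe_i(T)$ with the counts of $i$-free entries $\st{i},i$ and $\st{i+1},i+1$, and then proves both crystal conditions together by the same case-by-case analysis over the clauses L1(a)--L2(c), tracking the hook reading word to show the $i$-blocked-pair count is preserved and that the acted-on cell becomes the leftmost $i$-free entry $\st{i+1},i+1$, so the matching raising case inverts the move. The only cosmetic difference is ordering: the claim that iterating $\st{f}_i$ gives $\downf_i(T)=m_i(w(T))$ itself relies on the blocked-pair invariance established in that case analysis, so condition (2) is not really independent of the ``genuine work'' you defer to the end.
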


\begin{proof}
  From the definitions of the shifted operators, we have $\wt(\st{f}_i(T)) = \wt(T) + \alpha_i$ and $\wt(\st{e}_i(T)) = \wt(T) - \alpha_i$. Therefore there are two statements to prove based on Definition~\ref{def:base-A}: that $\st{e}_i(T)=T^{\prime}$ if and only if $\st{f}_i(T^{\prime}) = T$, and that $\downf_i(T) - \upe_i(T) = (\wt(T)_i - \wt(T)_{i+1})$. Note that the second statement follows once we show $\upe_i(T)$ is the number of $i$-free entries $\st{i+1},i+1$ in $T$ and $\downf_i(T)$ is the number of $i$-free entries $\st{i},i$ of $T$, since all other entries appear in pairs whose weights cancel. We prove both statements together in cases based on Definition~\ref{def:shifted-lower} for the shifted lowering operators. 
  
  \emph{Case L1(a)}: We assume $x=1$ and $z = \st{i+1}$ in $T$, therefore $\st{f}_i(T)$ has $x=\st{i+1}$ and $z=i+1$. For an example of this case, see Figure~\ref{fig:free-L1a}. In the hook reading words, the position of $x$ in $w(T)$ is precisely the position of $z$ in $w(\st{f}_i(T))$. However, the position of $z$ in $w(T)$ is weakly left of the position of $x$ in $w(\st{f}_i(T))$, with the offset equal to the number of marked entries strictly above $z$ or strictly below $x$ in $T$ (and in $\st{f}(T)$). However, since $\st{f}_i(T)$ is semistandard, there cannot be an entry $\st{i+1}$ above $z$, so we only need to be concerned about any entries $\st{i}$ below $x$. Note that if $u = \st{i}$ is below $x$ and $v$ is the cell immediately to its right, then $\st{i+1} < v$, and since $v$ is below $z$, we also have $v \leq \st{i+1}$. Therefore $v = \st{i+1}$, therefore in this way we pair off each $\st{i}$ below $x$ with an $\st{i+1}$ below $z$. In particular, since marked entries in the column of $z$ are read immediately before marked entries in the column of $x$, every $\st{i}$ below $x$ is $i$-paired with an $\st{i+1}$ weakly below $z$ in both $w(T)$ and in $w(\st{f}_i(T))$. Therefore the number of $i$-blocked pairs is unchanged in passing from $w(T)$ to $w(\st{f}_i(T))$. Furthermore, the result is that the rightmost $i$-free entry $i$ becomes the leftmost $i$-free entry $i+1$, landing us in case R1(a) for the shifted raising operator, which will precisely undo the action.
  
  \begin{figure}[ht]
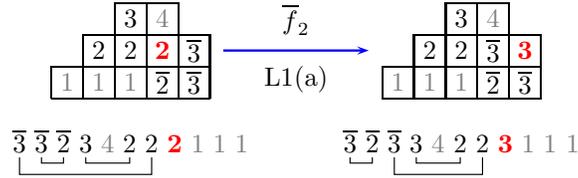

    \begin{displaymath}
      \begin{array}{c@{\hskip 3\cellsize}c}
        \rnode{A}{\tableau{& & 3 & {\gray 4} \\ & 2 & 2 & \textbf{\red 2} & \st{3} \\ {\gray 1} & {\gray 1} & {\gray 1} & \st{2} & \st{3}}} &
        \rnode{B}{\tableau{& & 3 & {\gray 4} \\ & 2 & 2 & \st{3} & \textbf{\red 3} \\ {\gray 1} & {\gray 1} & {\gray 1} & \st{2} & \st{3}}} \\ \\
        \rnode{a1}{\st{3}} \ \rnode{a2}{\st{3}} \ \rnode{a3}{\st{2}} \ \rnode{a4}{3} \ {\gray 4} \ \rnode{a5}{2} \ \rnode{a6}{2} \ \mathbf{\red 2} \ {\gray 1 \ 1 \ 1} &
        \rnode{b1}{\st{3}} \ \rnode{b2}{\st{2}} \ \rnode{b3}{\st{3}} \ \rnode{b4}{3} \ {\gray 4} \ \rnode{b5}{2} \ \rnode{b6}{2} \ \mathbf{\red 3} \ {\gray 1 \ 1 \ 1} 
      \end{array} 
      \psset{nodesep=5pt,linewidth=.2ex}
      \ncline[linecolor=blue]{->} {A}{B} \naput{\st{f}_2} \nbput{\text{L1(a)}}
      \ncdiag[linewidth=.1ex,nodesep=2pt,angleA=-90,angleB=-90,arm=1.5ex] {a1}{a6} 
      \ncdiag[linewidth=.1ex,nodesep=2pt,angleA=-90,angleB=-90,arm=0.5ex] {a2}{a3} 
      \ncdiag[linewidth=.1ex,nodesep=2pt,angleA=-90,angleB=-90,arm=0.5ex] {a4}{a5} 
      \ncdiag[linewidth=.1ex,nodesep=2pt,angleA=-90,angleB=-90,arm=0.5ex] {b1}{b2} 
      \ncdiag[linewidth=.1ex,nodesep=2pt,angleA=-90,angleB=-90,arm=1.5ex] {b3}{b6} 
      \ncdiag[linewidth=.1ex,nodesep=2pt,angleA=-90,angleB=-90,arm=0.5ex] {b4}{b5}
    \end{displaymath}
    \caption{\label{fig:free-L1a}Example of $i$-blocked/free entries when $\st{f}_i$ acts by case L1(a).}
  \end{figure}

  \emph{Case L2(a)}: We assume $x=\st{i}$ and $y=i$ in $T$, therefore $\st{f}_i(T)$ has $x=i$ and $y=\st{i+1}$. For an example of this case, see Figure~\ref{fig:free-L2a}. The position of $x=\st{i}$ in $w(T)$ is precisely the position of $y=\st{i+1}$ in $w(\st{f}_i(T))$. However, the position of $y=i$ in $w(T)$ is weakly right of the position of $x=i$ in $w(\st{f}_i(T))$, with the offset equal to the total number of unmarked entries strictly right of $y$ or strictly left of $x$ in $T$ (and in $\st{f}(T)$). We claim every cell $v$ with entry $i+1$ in this range is $i$-blocked. Indeed, in this case $v$ must be right of $y$, therefore the cell immediately below $v$, say $u$, must have entry $\st{i} < u < i+1$. However, the entry cannot be $\st{i+1}$, since this forces a downward column of entries $\st{i+1}$ equal in length to the downward column of entries $\st{i}$ from $x$, therefore $x$ would be $i$-blocked. Therefore $u$ must have entry $i$, so there are at least as many entries $i$ right of $x$ as entries $i+1$ right of $y$, proving that all of the latter are indeed $i$-blocked. Thus the number of $i$-blocked pairs is unchanged in passing from $w(T)$ to $w(\st{f}_i(T))$. Furthermore, the result is that the rightmost $i$-free entry $\st{i}$ becomes the leftmost $i$-free entry $\st{i+1}$, landing us in case R2(a) for the shifted raising operator, which will precisely undo the action.
  
  \begin{figure}[ht]
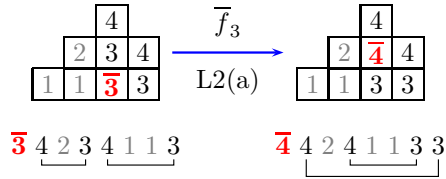

    \begin{displaymath}
      \begin{array}{c@{\hskip 3\cellsize}c}
        \rnode{E}{\tableau{ & & 4 \\ & {\gray 2} & 3 & 4 \\ {\gray 1} & {\gray 1} & \mathbf{\red\st{3}} & 3 }} &
        \rnode{F}{\tableau{ & & 4 \\ & {\gray 2} & \mathbf{\red\st{4}} & 4 \\ {\gray 1} & {\gray 1} & 3 & 3 }} \\ \\
        \mathbf{\red\st{3}} \ \rnode{e1}{4} \ {\gray 2} \ \rnode{e2}{3} \ \rnode{e3}{4} \ {\gray 1} \ {\gray 1} \ \rnode{e4}{3} &
        \mathbf{\red\st{4}} \ \rnode{f1}{4} \ {\gray 2} \ \rnode{f2}{4} \ {\gray 1} \ {\gray 1} \ \rnode{f3}{3} \ \rnode{f4}{3}
      \end{array}
      \psset{nodesep=5pt,linewidth=.2ex}
      \ncline[linecolor=blue]{->} {E}{F} \naput{\st{f}_3} \nbput{\text{L2(a)}}
      \ncdiag[linewidth=.1ex,nodesep=2pt,angleA=-90,angleB=-90,arm=0.5ex] {e1}{e2} 
      \ncdiag[linewidth=.1ex,nodesep=2pt,angleA=-90,angleB=-90,arm=0.5ex] {e3}{e4} 
      \ncdiag[linewidth=.1ex,nodesep=2pt,angleA=-90,angleB=-90,arm=1.5ex] {f1}{f4} 
      \ncdiag[linewidth=.1ex,nodesep=2pt,angleA=-90,angleB=-90,arm=0.5ex] {f2}{f3} 
    \end{displaymath}
    \caption{\label{fig:free-L2a}Example of $i$-blocked/free entries when $\st{f}_i$ acts by case L2(a).}
  \end{figure}
  
  \emph{Case L1(b)}: We have $x=i$ in $T$ become $x=i+1$ in $\st{f}_i(T)$, and by Lemma~\ref{lem:free} this happens at the rightmost $i$-free entry, so the number of $i$-blocked pairs is again unchanged and the rightmost $i$-free entry $i$ becomes the leftmost $i$-free entry $i+1$ landing in case R1(b) of the shifted raising operator, which precisely toggles back to case L1(b) under $\st{e}_i$.

  \emph{Case L2(b)}: We have $x=\st{i}$ in $T$ becomes $x=\st{i+1}$ in $\st{f}_i(T)$, and by Lemma~\ref{lem:free} this happens at the rightmost $i$-free entry, so the number of $i$-blocked pairs is unchanged and the rightmost $i$-free entry $\st{i}$ becomes the leftmost $i$-free entry $\st{i+1}$ landing in case R2(b) of the shifted raising operator, which precisely toggles back to case L2(b) under $\st{e}_i$.
  
  \emph{Case L1(c)}: We assume $x=i$, $y = \st{i+1}$ or $i+1$, and $z > \st{i+1}$ in $T$, and, letting $u$ denote the northeastern-most cell of the $(i+1)$-ribbon containing $y$, $u = \st{i+1}$. Then $\st{f}_i(T)$ has $x=\st{i+1}$ and $u = i+1$. For an example of this case, see Figure~\ref{fig:free-L1c}. The situation here is similar to case L1(a), though now both $x$ and $u$ move when passing from $w(T)$ to $w(\st{f}_i(T))$, so we consider each in turn. First, comparing the position of $x=i$ in $w(T)$ with that of $u=i+1$ in $w(\st{f}_i(T))$, $x$ moves left past any unmarked entries that lie in a row strictly between that of $u$ and $x$. We claim that any such entries $i$ are $i$-blocked. Since no two entries $\st{i+1}$ may occur in the same row, the number of entries $i+1$ along the $(i+1)$-ribbon is one fewer than the number of columns spanned by the ribbon (since the northwestern-most entry is $\st{i+1}$). Further, since $i < \st{i+1}$ with no intermediate values, any $i$ in a row between $u$ and $x$ must be immediately below the $(i+1)$-ribbon, therefore the maximum number of such entries is again the number of columns spanned minus one for the column of $x$. Since the $(i+1)$-ribbon lies above the $i$'s, all of the $i$'s will be $i$-paired with one of those $i+1$'s, thus proving the claim. Second, comparing the position of $u=\st{i+1}$ in $w(T)$ with that of $x=\st{i+1}$ in $w(\st{f}_i(T))$, $u$ moves left past any marked entries that lie weakly between $u$ and $x$ in the column reading word (bottom to top along columns, from right to left). By the same analysis of ribbons, where now we count columns instead of rows, any such entries $\st{i}$ are $i$-blocked. Therefore the number of $i$-blocked pairs is once again unchanged. Furthermore, the head of the $(i+1)$-ribbon will become the leftmost $i$-free entry $i+1$ and satisfy the conditions of case R2(c) for the shifted raising operators, whose action will precisely undo that of the shifted raising operator for this case.
  
  \begin{figure}[ht]
    \begin{displaymath}
      \begin{array}{c@{\hskip 2\cellsize}c}
        \rnode{C}{\tableau{& & \mathbf{\red 3} & \st{4} & 4 \\ & {\gray 2} & \st{3} & 3 & \st{4} \\ {\gray 1} & {\gray\st{2}} & {\gray 2} & \st{3} & \mathbf{\red 3} }} &
        \rnode{D}{\tableau{& & \mathbf{\red 3} & \mathbf{\red 4} & 4 \\ & {\gray 2} & \st{3} & 3 & \st{4} \\ {\gray 1} & {\gray\st{2}} & {\gray 2} & \st{3} & \st{4} }} \\ \\
        \rnode{c1}{\st{4}} \ \rnode{c2}{\st{3}} \ \rnode{c3}{\st{4}} \ \rnode{c4}{\st{3}} \ \mathbf{\red 3} \ \rnode{c5}{4} \ {\gray\st{2}} \ {\gray 2} \ \rnode{c6}{3} \ {\gray 1} \ {\gray 2} \ \mathbf{\red 3} &
        \rnode{d1}{\st{4}} \ \rnode{d2}{\st{4}} \ \rnode{d3}{\st{3}} \ \rnode{d4}{\st{3}} \ \mathbf{\red 3} \ \mathbf{\red 4} \ \rnode{d5}{4} \ {\gray\st{2}} \ {\gray 2} \ \rnode{d6}{3} \ {\gray 1} \ {\gray 2} 
      \end{array} 
      \psset{nodesep=5pt,linewidth=.2ex}
      \ncline[linecolor=blue]{->} {C}{D} \naput{\st{f}_3} \nbput{\text{L1(c)}}
      \ncdiag[linewidth=.1ex,nodesep=2pt,angleA=-90,angleB=-90,arm=0.5ex] {c1}{c2} 
      \ncdiag[linewidth=.1ex,nodesep=2pt,angleA=-90,angleB=-90,arm=0.5ex] {c3}{c4} 
      \ncdiag[linewidth=.1ex,nodesep=2pt,angleA=-90,angleB=-90,arm=0.5ex] {c5}{c6} 
      \ncdiag[linewidth=.1ex,nodesep=2pt,angleA=-90,angleB=-90,arm=1.5ex] {d1}{d4} 
      \ncdiag[linewidth=.1ex,nodesep=2pt,angleA=-90,angleB=-90,arm=0.5ex] {d2}{d3} 
      \ncdiag[linewidth=.1ex,nodesep=2pt,angleA=-90,angleB=-90,arm=0.5ex] {d5}{d6}
    \end{displaymath}
    \caption{\label{fig:free-L1c}Example of $i$-blocked/free entries when $\st{f}_i$ acts by case L1(c).}
  \end{figure}

  \emph{Case L2(c)}: We assume $x=\st{i}$, $y > i$, and $z = i$ or $\st{i+1}$ in $T$, and let $u=i$ denote the southwestern-most entry $i$ of the $i$-ribbon containing $x$ not followed by $i$ or $\st{i+1}$. Then $\st{f}_i(T)$ has $x=i$ and $u = \st{i+1}$. For an example of this case, see Figure~\ref{fig:free-L2c}. As with case L1(c), both $x$ and $u$ move when passing from $w(T)$ to $w(\st{f}_i(T))$. Comparing the position of $x=\st{i}$ in $w(T)$ with that of $u=\st{i+1}$ in $w(\st{f}_i(T))$, $x$ moves left past any marked entries that lie in a column strictly between that of $x$ and $u$. We claim that any such entries $\st{i}$ are already $i$-blocked in $T$. Starting from $x$, the first time the ribbon descends there must be an entry $\st{i+1}$ at the end of that row, else the last $i$ in that row would have been $u$, therefore all entries immediately right of the descending portion must be $\st{i+1}$ except for the right turn of the $i$-ribbon, which must have an $i$, and the argument repeats. Therefore each step down of the $i$-ribbon has an $\st{i}$, and in the column immediatelyto the right there are equally many $\st{i+1}$'s offset one row higher, so all entries $\st{i}$ are $i$-blocked as claimed. Comparing the position of $u=i$ in $w(T)$ with that of $x=i$ in $w(\st{f}_i(T))$, $u$ moves left past any unmarked entries that lie in a row between that of $x$ and $u$. We claim that any such entries $i+1$ are already $i$-blocked in $T$ and not by pairing with $u$. The number of $i$'s in this range is the number of columns spanned by the $i$-ribbon below the top row. Since the $(i+1)$-ribbon(s) in this range must lie immediately on top of the $i$-ribbon, the number of $i+1$'s is bounded by the number of columns spanned by the $i$-ribbon excluding the top row minus one for the entry $\st{i+1}$ that must end the top row, therefore there are strictly more $i$'s including $u$. Therefore all entries $i+1$ remain $i$-blocked in $\st{f}_i(T)$. 
  
  \begin{figure}[ht]
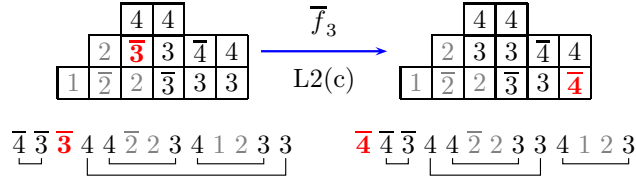

    \begin{displaymath}
      \begin{array}{c@{\hskip 2\cellsize}c}
        \rnode{G}{\tableau{ & & 4 & 4 \\ & {\gray 2} & \mathbf{\red\st{3}} & 3 & \st{4} & 4 \\ {\gray 1} & {\gray\st{2}} & {\gray 2} & \st{3} & 3 & 3 }} &
        \rnode{H}{\tableau{ & & 4 & 4 \\ & {\gray 2} & 3 & 3 & \st{4} & 4 \\ {\gray 1} & {\gray\st{2}} & {\gray 2} & \st{3} & 3 & \mathbf{\red\st{4}} }} \\ \\
        \rnode{g1}{\st{4}} \ \rnode{g2}{\st{3}} \ \mathbf{\red\st{3}} \ \rnode{g3}{4} \ \rnode{g4}{4} \ {\gray\st{2}} \ {\gray 2} \ \rnode{g5}{3} \ \rnode{g6}{4} \ {\gray 1} \ {\gray 2} \ \rnode{g7}{3} \ \rnode{g8}{3} &
        \mathbf{\red\st{4}} \ \rnode{h1}{\st{4}} \ \rnode{h2}{\st{3}} \ \rnode{h3}{4} \ \rnode{h4}{4} \ {\gray\st{2}} \ {\gray 2} \ \rnode{h5}{3} \ \rnode{h6}{3} \ \rnode{h7}{4} \ {\gray 1} \ {\gray 2} \ \rnode{h8}{3} 
      \end{array}
      \psset{nodesep=5pt,linewidth=.2ex}
      \ncline[linecolor=blue]{->} {G}{H} \naput{\st{f}_3} \nbput{\text{L2(c)}}
      \ncdiag[linewidth=.1ex,nodesep=2pt,angleA=-90,angleB=-90,arm=0.5ex] {g1}{g2} 
      \ncdiag[linewidth=.1ex,nodesep=2pt,angleA=-90,angleB=-90,arm=1.5ex] {g3}{g8} 
      \ncdiag[linewidth=.1ex,nodesep=2pt,angleA=-90,angleB=-90,arm=0.5ex] {g4}{g5} 
      \ncdiag[linewidth=.1ex,nodesep=2pt,angleA=-90,angleB=-90,arm=0.5ex] {g6}{g7} 
      \ncdiag[linewidth=.1ex,nodesep=2pt,angleA=-90,angleB=-90,arm=0.5ex] {h1}{h2} 
      \ncdiag[linewidth=.1ex,nodesep=2pt,angleA=-90,angleB=-90,arm=1.5ex] {h3}{h6} 
      \ncdiag[linewidth=.1ex,nodesep=2pt,angleA=-90,angleB=-90,arm=0.5ex] {h4}{h5} 
      \ncdiag[linewidth=.1ex,nodesep=2pt,angleA=-90,angleB=-90,arm=0.5ex] {h7}{h8} 
    \end{displaymath}
    \caption{\label{fig:free-L2c}Example of $i$-blocked/free entries when $\st{f}_i$ acts by case L2(c).}
  \end{figure}
  \emph{Case L1(d)}: We assume $x=i$, $y = \st{i+1}$ or $i+1$, and $z > \st{i+1}$ in $T$, and, letting $u$ denote the northeastern-most cell of the $(i+1)$-ribbon containing $y$. Note that $u = i+1$ lies on the main diagonal, since each column between $u$ and $x$ contains at least one $i+1$, and as all are $i$-paired with entries to the left of $x$ in the reading word, the column strictly to the left of $u$ must contain an $i$. Then $\st{f}_i(T)$ has $x=\st{i+1}$. Since $u$ lies on the main diagonal, it occurs in $w(T)$ and in $w(\st{f}_i(T))$ after all marked entries $\st{i}, \st{i+1}$ and before all unmarked entries $i,i+1$ on the two relevant ribbons, therefore the analysis of case L1(c) resolves this case as well, with inverse given by case R1(d).
\end{proof}

From the proof of Theorem~\ref{thm:crystal}, we have the following characterization of the lengths of the heads and tails of $i$-strings in terms of the number of $i$-free entries, and this is equivalent to applying \eqref{e:lmax} and \eqref{e:max} to the hook reading word.

\begin{corollary}
  For $T$ a semistandard shifted tableau and $i \geq 1$ an index, $\upe_i(T)$ is the number of $i$-free entries $\st{i+1},i+1$ in $T$, and $\downf_i(T)$ is the number of $i$-free entries $\st{i},i$ of $T$, and we have
  \begin{eqnarray}
    \upe_i(T) & = & \max\{ r \mid \wt(w_{r} w_{r+1} \cdots w_{n})_{i+1} - \wt(w_{r} w_{r+1} \cdots w_{n})_{i} \},
    \label{e:shifted-head} \\
    \downf_i(T) & = & \max\{ r \mid \wt(w_{1} w_{2} \cdots w_{r})_{i} - \wt(w_{1} w_{2} \cdots w_{r})_{i+1} \}.
    \label{e:shifted-tail}
  \end{eqnarray}
  \label{cor:string-lengths}
\end{corollary}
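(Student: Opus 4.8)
The plan is to read off both the head/tail counts and the closed formulas directly from the analysis already carried out in the proof of Theorem~\ref{thm:crystal} together with Lemma~\ref{lem:free}, so essentially no new computation is needed.

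First I would establish that $\downf_i(T)$ equals the number of $i$-free entries $\st{i},i$. The proof of Theorem~\ref{thm:crystal} shows, case by case, that applying $\st{f}_i$ leaves the number of $i$-blocked pairs unchanged and sends the rightmost $i$-free entry $\st{i},i$ to the leftmost $i$-free entry $\st{i+1},i+1$. Consequently each application of $\st{f}_i$ decreases the number of $i$-free entries $\st{i},i$ by exactly one (while increasing the number of $i$-free entries $\st{i+1},i+1$ by one). Since $\st{f}_i(T)=0$ precisely when $m_i(w(T)) \leq 0$, which by Lemma~\ref{lem:free} happens precisely when there are no $i$-free entries $\st{i},i$, the operator $\st{f}_i$ can be applied exactly as many times as there are such entries. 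By the definition of $\downf_i$ this count is $\downf_i(T)$. The corresponding statement for $\upe_i(T)$ follows symmetrically from the shifted raising operators and the analog of Lemma~\ref{lem:free} for $\st{e}_i$ noted after Definition~\ref{def:shifted-raise}.

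Next I would deduce the closed formulas. By Lemma~\ref{lem:free}, $m_i(w(T))$ is exactly the number of $i$-free entries $\st{i},i$, which by the previous step equals $\downf_i(T)$; unwinding the definition of $m_i$ in \eqref{e:max} gives precisely the right-hand side of \eqref{e:shifted-tail}. The formula \eqref{e:shifted-head} for $\upe_i$ is obtained identically from the $\st{e}_i$-analog of Lemma~\ref{lem:free}, matching \eqref{e:lmax}.

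The one point requiring care — and the only genuine input beyond bookkeeping — is that the count of $i$-free entries $\st{i},i$ drops by \emph{exactly} one per application of $\st{f}_i$, rather than merely being bounded above. This needs both that no new $i$-free entries $\st{i},i$ are created and that precisely one is consumed, which is exactly what the ``rightmost free becomes leftmost free, blocked-pair count preserved'' description from the proof of Theorem~\ref{thm:crystal} guarantees in each of the cases L1(a)--L2(c). Given that invariance, the corollary is immediate.
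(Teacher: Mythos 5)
Your proposal is correct and follows essentially the same route as the paper: the corollary is stated there as an immediate consequence of the case-by-case analysis in the proof of Theorem~\ref{thm:crystal} (blocked-pair count preserved, rightmost $i$-free $\st{i},i$ becomes leftmost $i$-free $\st{i+1},i+1$) combined with Lemma~\ref{lem:free}, which is exactly the derivation you give. The point you flag as needing care --- that each application of $\st{f}_i$ consumes exactly one $i$-free entry $\st{i},i$ --- is indeed the substance the paper imports from that earlier proof, so nothing further is required.
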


We use Stembridge's characterization of regular graphs \cite{Ste03} to establish directly that the shifted lowering operators determine a normal crystal on shifted tableaux. To begin with, we show that the notation used in Definition~\ref{def:regular} is well-defined for the graph on semistandard shifted tableaux with edges given by the shifted crystal operators.

\begin{lemma}
  The graph on $\SSHT_n(\gamma)$ with edges $x {\blue \ial} y$ whenever $\st{e}_i(x) = y$ and $x {\blue \iar} z$ whenever $\st{f}_i(x) = z$ is a directed, colored graph satisfying Stembridge axioms A1 and A2.
  \label{lem:ax12}
\end{lemma}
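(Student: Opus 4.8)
The plan is to verify both axioms directly from properties already established for the shifted operators, so that the lemma becomes a piece of bookkeeping rather than new computation. That the structure is a directed, colored graph is immediate: for each color $i$ with $1 \leq i < n$ we place a directed edge $x \iar \st{f}_i(x)$ whenever $\st{f}_i(x) \neq 0$, and these (traversed backward) are the only edges. It then remains to check A2 and A1.

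For A2 I would argue that single-valuedness of the operators yields the required uniqueness of edges. By Theorem~\ref{thm:shifted-well}, $\st{f}_i$ is a well-defined map $\SSHT_n(\gamma) \rightarrow \SSHT_n(\gamma) \cup \{0\}$, so each vertex $x$ has at most one outgoing $i$-edge, namely the one to $\st{f}_i(x)$. Dually, an incoming $i$-edge into $x$ is by definition an equality $\st{f}_i(y) = x$; by the inverse relation established in Theorem~\ref{thm:crystal}, namely that $\st{e}_i(T) = T'$ if and only if $\st{f}_i(T') = T$, this forces $y = \st{e}_i(x)$, and since $\st{e}_i$ is itself a well-defined map (Definition~\ref{def:shifted-raise}, with the analogue of Theorem~\ref{thm:shifted-well}), there is at most one such $y$. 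Hence both edge-uniqueness conditions of A2 hold for every color.

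For A1 I would use the weight grading. By Theorem~\ref{thm:crystal}, each application of $\st{f}_i$ changes the weight by the fixed vector $\alpha_i = \e_i - \e_{i+1}$; in particular it alters the $(i+1)$st weight coordinate by a fixed nonzero amount. Along a monochromatic directed $i$-path $x_0 \iar x_1 \iar \cdots$, with $x_{j+1} = \st{f}_i(x_j)$, the quantity $\wt(x_j)_{i+1}$ therefore changes strictly monotonically in $j$. Since every weight coordinate is a nonnegative integer bounded above by $|\gamma|$, the total number of cells, which is preserved by the operators, such a path has length at most $|\gamma|$. Thus all monochromatic directed paths are finite, giving A1.

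The genuine content lives entirely in the supporting results: the well-definedness of $\st{f}_i$ and $\st{e}_i$ and their mutual invertibility, which were the subject of Theorems~\ref{thm:shifted-well} and \ref{thm:crystal}. The one point I would be careful about is that A2 is a statement about the underlying graph rather than about the operators, so the main step is translating the two edge-uniqueness conditions into the two single-valuedness statements for $\st{f}_i$ and $\st{e}_i$ and then invoking the inverse relation to link them; once this dictionary is in place, no further work is needed.
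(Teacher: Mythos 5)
Your proposal is correct and follows essentially the same route as the paper: A1 from the fact that $\st{f}_i$ shifts the weight by the fixed simple root $\alpha_i$ while weight coordinates are bounded non-negative integers, and A2 from the single-valuedness of $\st{f}_i$ (Theorem~\ref{thm:shifted-well}) together with the inverse relation between $\st{e}_i$ and $\st{f}_i$ (Theorem~\ref{thm:crystal}). The extra detail you supply (the explicit length bound and the dictionary between edge-uniqueness and single-valuedness) is sound but not a different argument.
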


\begin{proof}
  For axiom A1, all monochromatic directed paths have finite length since $\st{f}_i$ changes the weight of a semistandard shifted tableau by $\wt(\st{f}_i(T)) = \wt(T) + \alpha_i$, where $\alpha_i$ is the simple root $\e_i - \e_{i+1}$, and the weight of a semistandard shifted tableau has non-negative parts. Axiom A2, stating for every vertex $x$, there is at most one edge $x {\blue \ial} y$ and at most one edge $x {\blue \iar} z$, follows from $\st{f}_i$ being well-defined, proved in Theorem~\ref{thm:shifted-well}, and from $\st{f}_i$ and $\st{e}_i$ being inverses to one another, proved in Theorem~\ref{thm:crystal}. 
\end{proof}

Given the local nature of the shifted operators, meaning that $\st{f}_i$ looks only at the positions of entries $\st{i}, i, \st{i+1}, i+1$, axioms A3--A6 for $|i-j| \geq 2$ are easy to establish. Corollary~\ref{cor:string-lengths} helps to establish axioms A3 and A4 in the case $j=i$ (and axioms A5 and A6 are vacuous in this case). For the remaining cases, $j = i \pm 1$, axioms A3--A6 are resolved with the help of the following lemma.

\begin{lemma}
  Let $T$ be a semistandard shifted tableau. Then for $i>1$, $\st{f}_i(T)$ either has one more $(i-1)$-free entry $\st{i-1},i-1$ or one fewer $(i-1)$-free entry $\st{i},i$ and not both, and for $i\geq 1$, $\st{f}_i(T)$ either has one more $(i+1)$-free entry $\st{i+1},i+1$ or one fewer $(i+1)$-free entry $\st{i+2},i+2$ and not both.
  \label{lem:pm1}
\end{lemma}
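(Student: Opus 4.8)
The plan is to reduce the lemma to a weight computation together with two monotonicity statements, and to carry out the monotonicity by a case analysis that reuses the displacement computations from the proof of Theorem~\ref{thm:crystal}. Throughout I treat the two assertions, $j = i+1$ and $j = i-1$, in parallel since they are dual. Recall from Corollary~\ref{cor:string-lengths} that the number of $(i+1)$-free entries $\st{i+1},i+1$ (resp.\ $\st{i+2},i+2$) equals $\downf_{i+1}(T)$ (resp.\ $\upe_{i+1}(T)$), and the number of $(i-1)$-free entries $\st{i-1},i-1$ (resp.\ $\st{i},i$) equals $\downf_{i-1}(T)$ (resp.\ $\upe_{i-1}(T)$). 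So the lemma is the statement that, under $\st{f}_i$, exactly one of the pairs $(\downf_{i+1},\upe_{i+1})$ (resp.\ $(\downf_{i-1},\upe_{i-1})$) changes, the head going up by one or the tail going down by one.

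First I would record the weight computation. Inspecting Definition~\ref{def:shifted-lower} case by case shows that $\st{f}_i$ never alters a cell of value $\geq i+2$ or $\leq i-1$, decreases the number of cells of value $i$ by exactly one, and increases the number of cells of value $i+1$ by exactly one (consistent with $\wt(\st{f}_i(T)) = \wt(T)+\alpha_i$). Feeding this into the crystal relation $\downf_j - \upe_j = \wt_j - \wt_{j+1}$ established in Theorem~\ref{thm:crystal}, we get that $\downf_{i+1}-\upe_{i+1} = \wt_{i+1}-\wt_{i+2}$ increases by exactly $1$ under $\st{f}_i$, and likewise $\downf_{i-1}-\upe_{i-1}=\wt_{i-1}-\wt_i$ increases by exactly $1$. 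It therefore suffices to prove four monotonicity bounds: under $\st{f}_i$ the heads $\downf_{i+1},\downf_{i-1}$ do not decrease while the tails $\upe_{i+1},\upe_{i-1}$ do not increase. Indeed, writing $a$ and $b$ for the change in head and tail of the relevant $j$-string, the relations $a-b=1$ and $a\geq 0\geq b$ force $(a,b)\in\{(1,0),(0,-1)\}$ by integrality, which is precisely the ``one or the other and not both'' conclusion.

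For the monotonicity itself I would argue on the hook reading word restricted to the two relevant values. For $j=i+1$ the relevant values are $i+1,i+2$: the value-$(i+2)$ cells are untouched and exactly one value-$(i+1)$ entry is created (net), so up to reordering the restricted word is obtained from that of $T$ by inserting a single $(i+1)$-letter. Inserting an $(i+1)$ can only lower the suffix statistic $\wt(\cdots)_{i+2}-\wt(\cdots)_{i+1}$ controlling $\upe_{i+1}$ in \eqref{e:shifted-head} and raise the prefix statistic controlling $\downf_{i+1}$ in \eqref{e:shifted-tail}, giving $\upe_{i+1}$ weakly down and $\downf_{i+1}$ weakly up; the dual deletion of a single value-$i$ letter handles $j=i-1$. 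The main obstacle is exactly the reordering glossed over here: because marked and unmarked entries are read in separate passes (Definition~\ref{def:hook-word}), the marking changes performed in cases L1(a), L1(c), L2(a), L2(c) move the value-$(i+1)$ cells $z$ and $u$ between the column-pass and row-pass, so the restricted word is \emph{not} literally a single-letter insertion. I would control this using the displacement computations already carried out in the proof of Theorem~\ref{thm:crystal} (which locate, for each such case, exactly how far $x$, $z$, and $u$ move in the reading word and which entries they sweep past); these same computations show that the relative order of the value-$(i+2)$ letters among themselves is preserved and that the moved value-$(i+1)$ letter sweeps only over entries that are already blocked for the relevant pairing, so the suffix/prefix monotonicity survives the reordering. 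Assembling the cases yields all four bounds, and combining them with the weight computation completes the proof.
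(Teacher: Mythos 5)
Your proposal is correct and ends up resting on exactly the same unfinished business as the paper, but it packages the reduction differently, and the comparison is worth making. The paper argues directly on the single cell whose value changes: the removed $\st{i},i$ was either $(i-1)$-free (one fewer free $\st{i},i$) or $(i-1)$-blocked (its partner $\st{i-1},i-1$ becomes free), and dually for the created $\st{i+1},i+1$ against $(i+1)$-pairing; the dichotomy is then read off from which alternative occurs, and everything else is the deferred check that the traveling entries in L1(a),(c),(d) and L2(a),(c) change nothing further. You instead derive the dichotomy abstractly: the weight relation $\downf_j-\upe_j=\wt_j-\wt_{j+1}$ from Theorem~\ref{thm:crystal} (legitimately available here, no circularity) forces $\Delta\downf_{i\pm1}-\Delta\upe_{i\pm1}=1$, and then monotonicity plus integrality pins down $(1,0)$ or $(0,-1)$. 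That is a clean reduction, and it has the advantage of only requiring inequalities rather than exact bookkeeping of free entries. The one place your sketch is more optimistic than it should be is the monotonicity step itself: ``insert one $(i+1)$-letter and reorder'' does not automatically preserve the bounds, since moving a value-$(i+1)$ letter \emph{leftward} in the hook reading word can a priori increase the suffix statistic \eqref{e:shifted-head} controlling $\upe_{i+1}$ (a suffix that loses the letter gains $1$). Ruling this out requires knowing that the swept-over region cannot contain the positions realizing the maxima, and the displacement computations in the proof of Theorem~\ref{thm:crystal} establish blockedness only for $i$-pairing, not for $(i\pm1)$-pairing, so they do not literally transfer --- a parallel case analysis is needed. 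Since the paper's own proof disposes of the identical issue with the single sentence ``this is a case by case analysis similar to that in the proof of Theorem~\ref{thm:crystal},'' your proposal is at the same level of completeness; just be aware that this is where the real work lives in both versions.
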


\begin{proof}
  The net effect of $\st{f}_i$ on the weight is to remove an entry $\st{i}$ or $i$ and create an entry $\st{i+1}$ or $i+1$. If the removed entry $\st{i}, i$ was $i-1$-blocked, then some entry  $\st{i-1}, i-1$ that was $i-1$-blocked becomes $i-1$-free; otherwise the removed entry $\st{i}, i$ was $i-1$-free. Similarly, If the created entry $\st{i+1}, i+1$ pairs with an otherwise $i+1$-free entry $\st{i+2},i+2$, then both entries become $i+1$-blocked; otherwise, the created entry $\st{i+1}, i+1$ is $i+1$-free. Therefore we need only show that the traveling entries in the cases of L1(a),(c),(d) and L2(a),(c) of Definition~\ref{def:shifted-lower} do not otherwise change the number of $i\pm 1$-free entries. This is a case by case analysis similar to that in the proof of Theorem~\ref{thm:crystal}.
\end{proof}

\begin{theorem}
  The shifted raising and lowering operators $\st{e}_i, \st{f}_i$ for $i=1,2,\ldots,r$ define a normal crystal on $\SSHT_{r+1}(\gamma)$.
  \label{thm:shifted-crystal}
\end{theorem}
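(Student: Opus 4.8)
The plan is to deduce the theorem from Stembridge's characterization (Theorem~\ref{thm:structure-crystal}): since every regular graph is a normal crystal, it suffices to check that the graph on $\SSHT_{r+1}(\gamma)$ with edges $x \ial y$ when $\st{e}_i(x)=y$ and $x \iar z$ when $\st{f}_i(x)=z$ satisfies axioms A1 through A6 of Definition~\ref{def:regular}. Axioms A1 and A2 are already established in Lemma~\ref{lem:ax12}, so all the work lies in A3--A6. I would organize the verification by the value of $|i-j|$, since the interaction of $\st{f}_i$ and $\st{f}_j$ is governed entirely by the overlap of the entry sets $\{\st{i},i,\st{i+1},i+1\}$ and $\{\st{j},j,\st{j+1},j+1\}$.

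For $|i-j|\geq 2$ these sets are disjoint. Because $\st{f}_i$ alters only entries in $\{\st{i},i,\st{i+1},i+1\}$, while $j$-pairing and $j$-freeness depend only on entries in $\{\st{j},j,\st{j+1},j+1\}$, the $j$-free entries, and hence $\upe_j$ and $\downf_j$ by Corollary~\ref{cor:string-lengths}, are unchanged by $\st{e}_i$; this gives $\Delta_i\upe_j(x)=\Delta_i\downf_j(x)=0$ and so A3 and A4. For A5 one notes that $\st{e}_i$ and $\st{e}_j$ act on independent cells and neither disturbs the other's free entries, so they commute and the head condition $\nabla_j\downf_i(y)=0$ is immediate; A6 is vacuous since its hypothesis cannot hold. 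For $j=i$, axioms A5 and A6 are vacuous (their hypotheses fail) and A4 does not apply, while A3 follows directly from the crystal structure of Theorem~\ref{thm:crystal}: when $\st{e}_i x$ is defined we have $\upe_i(\st{e}_i x)=\upe_i(x)-1$ and $\downf_i(\st{e}_i x)=\downf_i(x)+1$, so $\Delta_i\upe_i(x)+\Delta_i\downf_i(x)=1+1=2$.

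The substantive case is $j=i\pm 1$, where A3 and A4 follow cleanly from Lemma~\ref{lem:pm1}. Writing $x'=\st{e}_i x$ so that $x=\st{f}_i(x')$, the lemma says that passing from $x'$ to $x$ either increases the number of $(i+1)$-free entries $\st{i+1},i+1$ by one or decreases the number of $(i+1)$-free entries $\st{i+2},i+2$ by one, and never both. By Corollary~\ref{cor:string-lengths} the first alternative yields $\Delta_i\downf_{i+1}(x)=-1$ and $\Delta_i\upe_{i+1}(x)=0$, and the second yields $\Delta_i\upe_{i+1}(x)=-1$ and $\Delta_i\downf_{i+1}(x)=0$; in both cases the two differences are each nonpositive and sum to $-1$, giving A3 and A4. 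The case $j=i-1$ is identical using the other clause of Lemma~\ref{lem:pm1}.

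The hard part will be A5 and A6 for $j=i\pm 1$, which govern the local commutation of adjacent operators. For A5 I would show that when $\Delta_i\upe_j(x)=0$, so that the tail rather than the head of the $j$-string moves, the free entries controlling $\st{e}_j$ sit in positions independent of those controlling $\st{e}_i$, whence $\st{e}_i\st{e}_j x=\st{e}_j\st{e}_i x=y$, and then confirm the resulting condition $\nabla_j\downf_i(y)=0$. For A6 I would analyze the configuration $\Delta_i\upe_j(x)=\Delta_j\upe_i(x)=-1$, in which both strings lengthen at the head, and trace the length-four path $E_iE_j^{2}E_i x=E_jE_i^{2}E_j x$ through the local picture of the relevant $i$- and $(i+1)$-ribbons. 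This demands a careful case analysis, parallel to but more involved than the well-definedness argument of Theorem~\ref{thm:shifted-well}, keeping track of how the traveling entries in cases L1(a), L1(c), L1(d) and L2(a), L2(c) reposition the $i$- and $(i\pm1)$-free entries; one cannot simply transport Stembridge's word-crystal relations because the marking toggles move two cells at once. I expect this ribbon bookkeeping to be the principal technical obstacle.
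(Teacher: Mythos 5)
Your architecture is exactly the paper's: reduce to Stembridge regularity via Theorem~\ref{thm:structure-crystal}, dispose of A1--A2 by Lemma~\ref{lem:ax12}, split A3--A6 by the value of $|i-j|$, and use Lemma~\ref{lem:pm1} together with Corollary~\ref{cor:string-lengths} for A3--A4 when $j=i\pm 1$. All of that is carried out correctly, including the sign bookkeeping in the $j=i\pm1$ case. The problem is that for A5 and A6 with $j=i\pm 1$ --- which you yourself identify as ``the hard part'' and which is the substantive content of the theorem --- you stop at a plan (``I would show\dots'', ``I would analyze\dots'', ``I expect this ribbon bookkeeping to be the principal technical obstacle''). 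Nothing is actually proved there: you do not exhibit why $\st{e}_i\st{e}_j x=\st{e}_j\st{e}_i x$ when $\Delta_i\upe_j(x)=0$, nor why the length-four paths in A6 close up. As written, the proposal is an outline with its critical step missing, not a proof.

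It is worth noting that the heavy ribbon case analysis you anticipate is precisely what the paper avoids at this stage. The case-by-case tracking of the traveling entries in L1(a),(c),(d) and L2(a),(c) is quarantined inside the proof of Lemma~\ref{lem:pm1}; once that lemma is in hand, the paper's verification of A5 and A6 happens entirely at the level of $i$-free and $j$-free entries in the hook reading word. For A5, $\nabla_i\downf_j(T)=0$ forces $\st{f}_i$ to remove a $j$-free entry (when $j=i-1$) or create a $j$-blocked entry (when $j=i+1$), so the entry on which $\st{f}_j$ acts is unchanged by $\st{f}_i$ and vice versa, giving commutation directly; for A6, $\nabla_i\downf_j(T)=-1$ means $\st{f}_i$ unblocks a new $j$-free entry, and one traces which free entries the compositions $\st{f}_i\st{f}_j^2\st{f}_i$ and $\st{f}_j\st{f}_i^2\st{f}_j$ successively modify. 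If you run your argument through Lemma~\ref{lem:pm1} and Corollary~\ref{cor:string-lengths} in this way, the ``principal technical obstacle'' largely dissolves; without doing so, your proposal does not establish the theorem.
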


\begin{proof}
  We show directly that the graph on $\SSHT_n(\gamma)$ with edges given by the shifted crystal operators is regular (Definition~\ref{def:regular}). Lemma~\ref{lem:ax12} proves axioms A1 and A2.

  For axioms A3 and A4, we have three cases based on $j$. For $j=i$, axiom A4 is vacuous, and the characterization of $\upe_i$ and $\downf_i$ in Corollary~\ref{cor:string-lengths} combined with Lemma~\ref{lem:free} shows that $\upe_i(\st{e}_i(T)) = \upe_i(T)-1$ and $\downf_i(\st{e}_i(T)) = \downf_i(T)+1$, which implies
  \[ \Delta_i \upe_i(T) = \upe_i(T) - \upe_i(\st{e}_i(T)) = 1 = \downf_i(\st{e}_i(T)) - \downf_i(T) = \Delta_i \downf_i(T), \]
  thereby proving axiom A3 for $j=i$. For $|i-j| \geq 2$, $\upe_j(\st{e}_i(T)) = \upe_j(T)$ and $\downf_j(\st{e}_i(T)) = \downf_j(T)$, therefore $\Delta_i \upe_j(T) = 0 = \Delta_i \downf_j(T)$, establishing axioms A3 and A4. For $|i-j| = 1$, Lemma~\ref{lem:pm1} ensures that either $\upe_i(\st{e}_i(T)) = \upe_i(T) + 1$ or $\downf_i(\st{e}_i(T)) = \downf_i(T) - 1$ but not both, again establishing axioms A3 and A4.

  For axioms A5 and A6, we have the same three cases based on $j$. For $i=j$, both axioms are vacuous. For $|i-j| \geq 2$, axiom A6 is vacuous. For axiom A5, note that $|i-j| \geq 2$ implies $\{i,i+1\} \cap \{j,j+1\} = \varnothing$, therefore $\st{e}_i,\st{f}_i$ do not alter the relative positions of $j,j+1$, and neither do $\st{e}_j,\st{f}_j$ alter the relative positions of $i,i+1$. Therefore $i$-operators and $j$-operators commute as required. Finally, consider the case $|i-j| = 1$.

  For axiom A5, note that by Lemma~\ref{lem:pm1}, we have
  \[ \nabla_i \downf_j(T) = 0 \ \Rightarrow \ \downf_j(\st{f}_i(T)) = \downf_j(T) \ \Rightarrow \ \left\{
  \begin{array}{rl}
    \st{f}_i \mbox{ removes a $j$-free entry $i$} & \text{if} \ j=i-1 , \\
    \st{f}_i \mbox{ creates a $j$-blocked entry $i+1$} & \text{if} \ j=i+1 .
  \end{array}\right. \]
  So $\st{f}_i$ removes an $(i-1)$-free entry $i$ therefore the rightmost $(i-1)$-free entry $i-1$ is the same in $T$ and in $\st{f}_i(T)$. Similarly, $\st{f}_{i-1}$ will create an $i$-blocked entry $i$, ensuring that the leftmost $i$-free entry $i$ is the same for $T$ and $\st{f}_{i-1}(T)$. Combining these, we see that $\st{f}_{i-1} \st{f}_{i} (T) = \st{f}_{i} (T) \st{f}_{i-1} (T)$ as desired. The case $j=i+1$ is identical, and the analysis for $\Delta_i \upe_j(T) = 0$ is analogous.
  
  Finally, for axiom A6, by Lemma~\ref{lem:pm1} we have
  \[ \nabla_i \downf_j(T) = -1 \ \Rightarrow \ \downf_j(\st{f}_i(T)) = \downf_j(T)+1 \ \Rightarrow \ \left\{
  \begin{array}{rl}
    \st{f}_i \mbox{ removes a $j$-blocked entry $i$} & \text{if} \ j=i-1 , \\
    \st{f}_i \mbox{ creates a $j$-free entry $i+1$} & \text{if} \ j=i+1 .
  \end{array}\right. \]
  Therefore $\st{f}_i$ removes an $(i-1)$-blocked entry $i$, allowing another $(i-1)$-free entry $i-1$ to manifest. Applying $\st{f}_{i-1}^2$ changes the newly created $(i-1)$-free entry $i-1$ of $\st{f}_i(T)$ and the original rightmost $(i-1)$-free entry $i-1$ of $T$ to $i$ so that a final application of $\st{f}_i$ changes the latter to $i+1$. In the other direction, $\st{f}_{i-1}$ creates an $i$-free entry $i$. Applying $\st{f}_{i}^2$ changes the newly created $i$-free entry $i$ of $\st{f}_{i-1}(T)$ and the original rightmost $i$-free entry $i$ of $T$ to $i+1$ so that a final application of $\st{f}_{i-1}$ yields the same result, and we have $\st{f}_{i} \st{f}_{i-1}^2 \st{f}_{i} (T) = \st{f}_{i-1} \st{f}_{i}^2 (T) \st{f}_{i-1} (T)$ as desired. The case $j=i+1$ is identical, and the analysis for $\Delta_i \upe_j(T) = -1$ is analogous.
\end{proof}

Using Theorem~\ref{thm:shifted-crystal}, we give a new proof of Theorem~\ref{thm:P_Schur}. Note that this characterization of the Schur coefficients of a Schur $P$-polynomial is more explicit than Sagan's and Worley's shifted insertion rule \cite{Sag87,Wor84} or Assaf's dual equivalence characterization \cite{Ass18}.

\begin{definition}
  For $\gamma$ a strict partition, the set of \emph{Yamanouchi shifted tableaux of shape $\gamma$}, denoted by $\Yam(\gamma)$, is the set of semistandard shifted tableaux $T$ of shape $\gamma$ such that $\upe_i(T)=0$ for all $i$.
  \label{def:yamanouchi}
\end{definition}

For example, the Yamanouchi shifted tableaux of shape $(4,3,1)$ are shown in Figure~\ref{fig:highest-weights}. The Yamanouchi shifted tableaux are precisely the highest weights of the normal crystal $(\SSHT_n(\gamma),\{\st{e}_i,\st{f}_i\}_{1 \leq i < n},\wt)$. By Corollary~\ref{cor:highest-weights}, this gives an explicit characterization of the Schur expansion of a Schur $P$-polynomial.

\begin{figure}[ht]
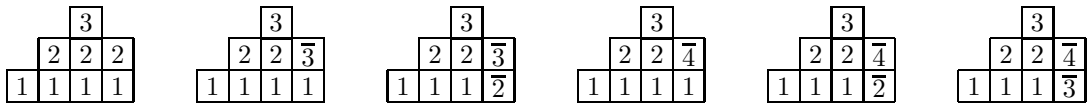

  \begin{displaymath}
    \begin{array}{c@{\hskip 2\cellsize}c@{\hskip 2\cellsize}c@{\hskip 2\cellsize}c@{\hskip 2\cellsize}c@{\hskip 2\cellsize}c}
      \tableau{ & & 3\\ & 2 & 2 & 2 \\ 1& 1 & 1 & 1} & 
      \tableau{ & & 3\\ & 2 & 2 & \st{3} \\ 1& 1 & 1 & 1} & 
      \tableau{ & & 3\\ & 2 & 2 & \st{3} \\ 1& 1 & 1 & \st{2}} &      
      \tableau{ & & 3\\ & 2 & 2 & \st{4} \\ 1& 1 & 1 & 1} & 
      \tableau{ & & 3\\ & 2 & 2 & \st{4} \\ 1& 1 & 1 & \st{2}} & 
      \tableau{ & & 3\\ & 2 & 2 & \st{4} \\ 1& 1 & 1 & \st{3}}       
    \end{array}
  \end{displaymath}
  \caption{\label{fig:highest-weights}The Yamanouchi shifted tableaux of strict shape $(4,3,1)$.}
\end{figure}

\begin{corollary}
  For $\gamma$ a strict partition, we have
  \begin{equation}
    P_{\gamma}(x_1,\ldots,x_{n}) = \sum_{T \in \Yam(\gamma)} s_{\wt(T)}(x_1,\ldots,x_{n}).
  \end{equation}
  In particular, the Schur $P$-polynomial is Schur positive with coefficients $g_{\gamma,\lambda} = \#\{T \in \Yam(\gamma) \mid \wt(T)=\lambda \}$.
  \label{cor:P2schur}
\end{corollary}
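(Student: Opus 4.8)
The plan is to assemble the statement directly from the normal crystal structure established in Theorem~\ref{thm:shifted-crystal} together with the general Schur expansion recorded in Corollary~\ref{cor:highest-weights}. The essential observation is that the generating polynomial $P_\gamma$ is, by Definition~\ref{def:schur-P}, literally the character of the crystal: comparing with Definition~\ref{def:character}, we have
\[ P_\gamma(x_1,\ldots,x_n) = \sum_{S \in \SSHT_n(\gamma)} x_1^{\wt(S)_1} \cdots x_n^{\wt(S)_n} = \mathrm{ch}(\SSHT_n(\gamma)). \]
Setting $n = r+1$, the underlying set, the operators $\st{e}_i,\st{f}_i$, and the weight map are exactly those shown to form a normal crystal.

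First I would invoke Theorem~\ref{thm:shifted-crystal} to conclude that $(\SSHT_{n}(\gamma),\{\st{e}_i,\st{f}_i\},\wt)$ is a normal crystal, so that Corollary~\ref{cor:highest-weights} applies verbatim and yields
\[ \mathrm{ch}(\SSHT_n(\gamma)) = \sum_{\substack{T \in \SSHT_n(\gamma) \\ \st{e}_i(T)=0\ \forall i}} s_{\wt(T)}. \]
Second, I would identify the index set of this sum with $\Yam(\gamma)$. By Definition~\ref{def:hw-A}, a highest weight element is one with $\st{e}_i(T)=0$ for all $i$; since $\upe_i(T) = \max\{k \mid \st{e}_i^k(T) \neq 0\}$, the condition $\st{e}_i(T)=0$ for all $i$ is precisely $\upe_i(T)=0$ for all $i$, which is exactly the defining condition of a Yamanouchi shifted tableau in Definition~\ref{def:yamanouchi}. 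Substituting gives the claimed identity, and reading off the coefficient of $s_\lambda$ yields $g_{\gamma,\lambda} = \#\{T \in \Yam(\gamma) \mid \wt(T)=\lambda\}$.

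This argument is a bookkeeping assembly of prior results, so there is no genuine obstacle to overcome; the only point requiring care is confirming that the two characterizations of \emph{highest weight} coincide, namely that $\st{e}_i(T)=0$ for all $i$ and $\upe_i(T)=0$ for all $i$ describe the same tableaux, which is immediate from the definition of the string length $\upe_i$ together with Corollary~\ref{cor:string-lengths}.
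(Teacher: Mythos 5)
Your proposal is correct and matches the paper's own (implicit) argument exactly: the paper likewise derives this corollary by combining Theorem~\ref{thm:shifted-crystal} with Corollary~\ref{cor:highest-weights}, noting that the Yamanouchi shifted tableaux are precisely the highest weight elements of the normal crystal on $\SSHT_n(\gamma)$. Your extra remark verifying that $\st{e}_i(T)=0$ for all $i$ coincides with $\upe_i(T)=0$ for all $i$ is a correct and harmless bit of added care.
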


For example, from Figure~\ref{fig:highest-weights} we compute
\begin{eqnarray*}
 P_{(4,3,1)}(x_1,x_2,x_3,x_4) = s_{(4,3,1)}(x_1,x_2,x_3,x_4) +s_{(4,2,2)}(x_1,x_2,x_3,x_4) +s_{(3,3,2)}(x_1,x_2,x_3,x_4)\\+s_{(4,2,1,1)}(x_1,x_2,x_3,x_4) +s_{(3,3,1,1)}(x_1,x_2,x_3,x_4) +s_{(3,2,2,1)}(x_1,x_2,x_3,x_4).
\end{eqnarray*}

In order to ease computations of the Schur expansion of a Schur $P$-polynomial using Corollary~\ref{cor:P2schur}, we have the following necessary condition for a semistandard shifted tableau to be Yamanouchi.

\begin{lemma}\label{lem:Yamanouchi}
  If $T$ is a Yamanouchi shifted tableaux, then any unmarked entry on row $i$ is equal to $i$. In particular the leftmost box in row $i$ is labeled $i$.
\end{lemma}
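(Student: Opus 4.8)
The plan is to prove the contrapositive by showing that any unmarked entry in row $i$ that exceeds $i$, or equivalently any row $i$ whose entries are not all forced to be small, gives rise to a nonzero raising operator, contradicting the Yamanouchi condition $\upe_j(T)=0$ for all $j$. By Corollary~\ref{cor:string-lengths}, the condition $\upe_j(T)=0$ is equivalent to saying $T$ has no $j$-free entry $\st{j+1},j+1$ for every $j$. So the strategy is: assume some unmarked entry $c$ in row $i$ satisfies $c > i$, take the \emph{largest} such violation (or argue by downward induction on the value), and exhibit a $j$-free entry $\st{j+1}, j+1$ for a suitable $j$, contradicting Yamanouchi-ness.

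First I would set up an induction on $i$ from the bottom row upward. For the base case, consider row $1$: an unmarked entry in row $1$ must be $\geq 1$, and I claim it must equal $1$. Suppose instead some unmarked entry in row $1$ equals $c \geq 2$. Since entries weakly increase along the row and the leftmost diagonal cell in row $1$ is unmarked (no marks on the diagonal), reading the hook word shows the entries $\st{1},1$ available to $1$-pair against entries $\st{2},2$ are too few, so there will be a $1$-free entry equal to $2$ (or $\st 2$), forcing $\upe_1(T)>0$. More carefully, the inductive step assumes all unmarked entries in rows $1,\dots,i-1$ equal their row index and derives the same for row $i$: if an unmarked $c>i$ appears in row $i$, I would locate the leftmost unmarked entry of row $i$ that exceeds $i$ and track the entries $\st{i},i$ versus $\st{i+1},i+1$ in the hook reading word, using the semistandardness constraints (weak increase along rows and columns, at most one unmarked $i$ per column, at most one marked $\st{i}$ per row) together with the inductive control on lower rows to show that the count of $\st{i+1},i+1$ entries not $i$-paired is positive. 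This yields an $i$-free $\st{i+1}$ or $i+1$, so $\upe_i(T)>0$, the desired contradiction.

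The cleanest route to the counting is to exploit the characterization in Corollary~\ref{cor:string-lengths} directly: $\upe_i(T)$ equals the number of $i$-free entries $\st{i+1},i+1$, and the $i$-free entries arrange themselves with all $\st{i},i$ preceding all $\st{i+1},i+1$ in $w(T)$ (the claim proved inside Lemma~\ref{lem:free}). The key structural observation is that once all entries in rows below $i$ are pinned down by induction, the only unmarked $i$'s available to block incoming $i+1$'s live in row $i$ itself (an unmarked $i$ cannot sit in a row above its value and, by induction, rows below row $i$ contain only smaller unmarked values), so an unmarked entry exceeding $i$ in row $i$ directly depletes the supply of blockers and leaves an $i+1$ (or $\st{i+1}$) unpaired.

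The main obstacle will be the bookkeeping of the marked entries: marked $\st{i+1}$'s are read up columns rather than along rows, so their interleaving in the hook reading word with the unmarked $i$'s must be handled carefully to be sure the unpaired entry is genuinely $i$-free and not merely blocked by some marked entry elsewhere. I expect the diagonal condition (no marks on the main diagonal) and the column constraint (at most one unmarked $i$ per column) to be exactly what rescues the argument, guaranteeing that a too-large unmarked entry in row $i$ cannot be compensated by marks. The final sentence, that the leftmost box in row $i$ is labeled $i$, then follows immediately: the leftmost box of row $i$ is the diagonal cell, which carries no mark, hence is unmarked and so equals $i$ by the first part.
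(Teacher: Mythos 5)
Your overall strategy -- induct upward from the bottom row and contradict the Yamanouchi condition by exhibiting an unpaired entry -- is the same as the paper's. But the execution has a concrete error in the choice of pairing index. You fix $j$ equal to the row index $i$ and claim that an unmarked entry $c>i$ in row $i$ produces an $i$-free entry $\st{i+1}$ or $i+1$, hence $\upe_i(T)>0$. That is not what happens: if row $1$ contains an unmarked $5$, there need not be any $2$ or $\st{2}$ in the tableau at all, so your base-case assertion ``there will be a $1$-free entry equal to $2$'' is simply false. The contradiction must be extracted at index $j=c-1$, where $c$ is the value of the offending entry: the entry $c=(c-1)+1$ itself must be $(c-1)$-paired with some $c-1$ or $\st{c-1}$ occurring \emph{after} it in the hook reading word, and it is the nonexistence of such a partner that violates $\upe_{c-1}(T)=0$. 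This is exactly how the paper argues, both in the base case (an unmarked entry $i+1>1$ in row $1$ is followed in the reading word only by entries $\geq i+1$, so it is $i$-free) and in the inductive step.

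The second problem is that the one genuinely nontrivial point -- ruling out the possibility that the offending entry is paired by a \emph{marked} entry read later in the hook word -- is left as something you ``expect'' the diagonal condition to rescue, rather than proved. That step is the entire content of the inductive step: after the unmarked $c$ in row $k$, the reading word contains only larger unmarked entries in row $k$, unmarked entries equal to their (smaller) row index in lower rows by the inductive hypothesis, and marked entries in columns $\leq k-1$; the paper shows that a candidate $\st{c-1}$ among the latter would force the leftmost (diagonal, hence unmarked) box of the row below to carry a label exceeding its row index, contradicting the inductive hypothesis. Without this argument, and with the index $j$ chosen as above, the proposal does not establish the lemma.
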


\begin{proof}
  This is true for the first row, as the unmarked cells on the first row come last in the reading word and any $i+1>1$ can not be $i$-paired, violating the Yamanouchi condition. Now assume row $k$ has a box labeled $k+i$, $i>0$ and no row below has an unmarked entry larger than the row index. As there is no cell labeled $k+i-1$ on a row strictly below, this $k+i$ needs to be $(k+i-1)$-paired with  $\st{k+i-1}$, which can only happen if the leftmost box of the row below is labeled $k-1+i$, violating our assumption.
\end{proof}

To demonstrate the utility of our formula and the description in Lemma~\ref{lem:Yamanouchi}, for $k>1$ and integer, consider $\delta_k = (k-1,k-2,\ldots,1)$, the \emph{staircase partition}, which is, in particular, strict. Then we have the following result for the coincidence of Schur polynomials and Schur $P$-polynomials for staircase shapes.

\begin{corollary}
  For $k>1$, we have $P_{\delta_k}(x_1,\ldots,x_n) = s_{\delta_k}(x_1,\ldots,x_n)$. Moreover, if $\gamma$ is a strict partition such that $\gamma \neq \delta_k$ for any $k$, then $P_{\delta_k}(x_1,\ldots,x_n)$ has more than one term in its Schur expansion.
  \label{cor:staircase}
\end{corollary}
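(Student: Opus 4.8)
The plan is to apply Corollary~\ref{cor:P2schur}, which gives $P_\gamma = \sum_{T \in \Yam(\gamma)} s_{\wt(T)}$, and to count Yamanouchi tableaux. The common building block for both assertions is the \emph{ground-state} tableau $T_\gamma$ obtained by filling every cell of row $i$ with an unmarked $i$. First I would check that $T_\gamma$ is a genuine semistandard shifted tableau (rows are constant, and since higher rows are shifted right, each column reads $1,2,3,\dots$ upward) and that it is Yamanouchi: it has no marks, so its hook reading word is the concatenation of the constant runs of the rows taken from the top down, and the strict inequalities $\gamma_i > \gamma_{i+1}$ guarantee that every entry $i+1$ is $i$-paired with an entry $i$ lying to its right, whence $\upe_i(T_\gamma)=0$ for all $i$ by Corollary~\ref{cor:string-lengths}. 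Thus $\wt(T_\gamma)=\gamma$ and $s_\gamma$ always occurs in $P_\gamma$; in particular $T_{\delta_k}\in\Yam(\delta_k)$ has weight $\delta_k$.

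For the first assertion I would show $\Yam(\delta_k)=\{T_{\delta_k}\}$. Here the staircase geometry is decisive: column $j$ of $\delta_k$ consists of exactly the cells in rows $1,\dots,j$, so every off-diagonal cell $(\text{row }i,\text{col }j)$ has a cell immediately above it. Running an induction outward from the main diagonal (on $j-i$), Lemma~\ref{lem:Yamanouchi} pins the diagonal entries to $1,2,\dots,k-1$ and forces each cell into one of the two values $i$ or $\st{i+1}$ (it is at least the forced entry $i$ to its left and strictly below the forced entry $i+1$ above it). It then remains to exclude the marked alternative $\st{i+1}$, which I would do by showing such a mark would be an $i$-free entry and hence violate $\upe_i(T)=0$. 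Concluding $\Yam(\delta_k)=\{T_{\delta_k}\}$ gives $P_{\delta_k}=s_{\delta_k}$.

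For the second assertion, let $\gamma$ be strict but not a staircase and set $\gamma_{\ell+1}=0$; then some index $i$ satisfies $\gamma_i-\gamma_{i+1}\ge 2$, since otherwise all consecutive differences (including $\gamma_\ell$) equal $1$ and $\gamma=\delta_{\ell+1}$. I would produce a second highest weight by modifying $T_\gamma$: change the rightmost cell of row $i$ from $i$ to $\st{i+1}$. The hypothesis $\gamma_i-\gamma_{i+1}\ge 2$ means row $i+1$ is too short to sit above this cell and that the cell is off the diagonal, so the resulting $T'$ is again a valid semistandard shifted tableau, now of weight $(\dots,\gamma_i-1,\gamma_{i+1}+1,\dots)$, which is a partition different from $\gamma$. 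After verifying that $T'$ is Yamanouchi, the two highest weights $\gamma$ and $\wt(T')$ contribute distinct Schur functions $s_\gamma\ne s_{\wt(T')}$, so $P_\gamma$ has more than one term.

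The main obstacle in both parts is the same: confirming the highest-weight condition $\upe_m=0$ through the $i$-pairing of the hook reading word, rather than by a one-line weight computation. Concretely, I expect the delicate points to be ruling out the marked value $\st{i+1}$ in the staircase uniqueness argument and checking that inserting the single mark in $T'$ does not create an $m$-free entry $\st{m+1}$ or $m+1$ for any $m$; both amount to the kind of blocked/free bookkeeping carried out in the proof of Theorem~\ref{thm:crystal} and Lemma~\ref{lem:pm1}.
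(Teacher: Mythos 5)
Your proposal takes essentially the same route as the paper: both reduce to counting Yamanouchi tableaux via Corollary~\ref{cor:P2schur}, both exhibit the identical second Yamanouchi tableau (rightmost cell of a row with $\gamma_i \geq \gamma_{i+1}+2$ changed to $\st{i+1}$) in the non-staircase case, and both rule out marked entries in the staircase case by an $i$-pairing argument on the hook reading word. The only caution is that your induction on $j-i$ closes only if the exclusion of the marked alternative $\st{i+1}$ is carried out inside the inductive step (otherwise cells nearer the diagonal are not yet pinned to unmarked $i$, and the two-value dichotomy for the next cell is unavailable), and that exclusion---the step you correctly flag as delicate---is precisely where the paper's proof does its real work.
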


\begin{proof}
 Let $T \in \Yam(\delta_n)$. Note that by Lemma \ref{lem:Yamanouchi} the highest row of $T$ contains $n$, so all the entries on $T$ are bounded by $n$. Also, as the leftmost entry is equal to the row index, any marked number $k$ is on a row of index less than or equal to $k$. So if $T$ contains a marked entry, $T$ has a row $i$ such that row $i$ contains an entry greater than $i$, and row $i+1$ only contains $i+1$.
By column rules, this row has $i$s except for the rightmost cell which contains $\st{i+1}$, as seen in Figure \ref{fig:staircaselemmaproof}, left. It needs to be $i$-paired with some $\st{i}$, which can only be below the rightmost $i$ on row $i$. Consider the bottom-most $\st{i}$ on that column. The cell right adjacent to it can not be equal to $i$ by Lemma  \ref{lem:Yamanouchi}. It can not be $\st{i+1}$ either, as there is no way to $i$-pair that $\st{i+1}$. There are no other options by row and column rules, so such a tableaux does not exist.

Now assume $\gamma$ is not a staircase. Then there exists some $i$ such that $\gamma_i \geq \gamma_{i+1}+2$. The shifted tableaux that contains only $k$s on each row $k$ except for the rightmost cell of row $i$ which is labeled $\st{i+1}$ is a Yamanouchi shifted tableaux(Figure \ref{fig:staircaselemmaproof}, right), so there are at least two elements of $\Yam(\gamma)$.
\end{proof}

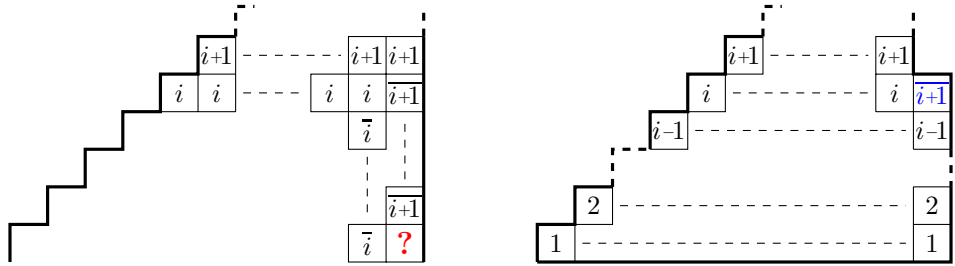
\begin{figure}[ht]
    \centering
    
	\begin{tikzpicture}[scale=.5]
	\draw (0,1) grid (2,2);
    \draw[line width=1.2 pt] (-4,-3)--(-4,-2)--(-3,-2)--(-3,-1)--(-2,-1)--(-2,0)--(-1,0)--(-1,1)--(0,1)--(0,2)--(1,2)--(1,3)--(2,3)(7,3)--(7,-3);
\draw[line width=1.2 pt, dashed] (7,3)--(7,3.8) (2,3)--(2,3.8)--(2.5,3.8);
    \draw (2,3)--(2,2);
    \draw[dashed] (5.5,-.2)--(5.5,-1.8) (6.5,-.8)--(6.5,.8) (2.2,1.5)--(3.8,1.5) (2.2,2.5)--(4.8,2.5);
	\node[red] at (6.5,-2.5) {\Large \textbf{?}};
    \node at (1.5,1.5) {$i$};
    \node at (.5,1.5) {$i$};
    \node at (1.5, 2.5) {$i\texplus 1$};
    \draw (5,1) grid (7,3);
    \draw (5,1)--(4,1)--(4,2)--(5,2);
    \node at (4.5,1.5) {$i$};
    \node at (5.5,1.5) {$i$};
    \node at (6.5,1.5) {$\st{i\texplus1}$};
    \draw (5,1)--(5,0)--(6,0)--(6,1);
    \draw (5,-2)--(5,-3)--(6,-3)--(6,-2)--(5,-2) (6,-3)--(7,-3)--(7,-1)--(6,-1)--(6,-2) (7,-2)--(6,-2);
    \node at (5.5,0.5) {$\st{i}$};
    \node at (5.5,-2.5) {$\st{i}$};
    \node at (6.5,-1.5) {$\st{i\texplus1}$};
    \node at (5.5,2.5) {$i\texplus 1$};
    \node at (6.5,2.5) {$i\texplus1$};
	\end{tikzpicture}\qquad \qquad
		\begin{tikzpicture}[scale=.5]
    \draw[line width=1.2 pt] (-4,-3)--(-4,-2)--(-3,-2)--(-3,-1)--(-2,-1) (-1,0)--(-1,1)--(0,1)--(0,2)--(1,2)--(1,3)--(2,3) (6,3)--(6,2)--(7,2)--(7,0) (7,-1)--(7,-3)--(-4,-3);
\draw[line width=1.2 pt, dashed] (6,3)--(6,3.8) (2,3)--(2,3.8)--(2.5,3.8) (7,0)--(7,-1) (-2,-1)--(-2,0)--(-1,0);
    \draw  (7,1)--(7,0)--(6,0);
    \draw[dashed](1.2,1.5)--(4.8,1.5) (2.2,2.5)--(4.8,2.5) (0.2,.5)--(5.8,.5);
    \node at (.5,1.5) {$i$};
    \node at (1.5, 2.5) {$i\texplus 1$};
    \draw (5,1) grid (7,2) (5,2)--(5,3)--(6,3)--(6,2);
    \node at (5.5,1.5) {$i$};
    \node[blue] at (6.5,1.5) {$\st{i\texplus1}$};
    \draw (6,0)--(6,1);
    \node at (6.5,0.5) {$i\texminus1$};
    \node at (5.5,2.5) {$i\texplus 1$};
    \node at (-3.5,-2.5) {1};
    \node at (-2.5,-1.5) {2};
     \node at (-0.5,.5) {$i\texminus 1$};
     \draw (-1,0)--(0,0)--(0,1)--(1,1)--(1,2)--(2,2)--(2,3);
    \draw (-4,-3)--(-3,-3)--(-3,-2)--(-2,-2)--(-2,-1);
     \draw[dashed](-2.8,-2.5)--(5.8,-2.5) (-1.8,-1.5)--(5.8,-1.5);
     \draw (6,-3) grid (7,-1);
   \node at (6.5,-2.5) {1};
    \node at (6.5,-1.5) {2};

	\end{tikzpicture}
    \caption{There is no shifted Yamanouchi tableaux of staircase shape that contains a marked entry(left), but such a tableaux can be found for any other shape(right). \label{fig:staircaselemmaproof}}
\end{figure}

Finally, we conclude this section with the following proof that our shifted crystal operators, while seemingly different, in fact coincide with the crystal operators defined recently by Hawkes, Paramonov, and Schilling \cite{HPS17}. Hawkes, Paramonov, and Schilling \cite{HPS17} define their operators, which we hereafter term the HPS operators, directly in terms of \emph{shifted insertion}. Sagan defined the shifted insertion algorithm \cite{Sag87}, also developed independently by Worley \cite{Wor84}, to generalize the Robinson-Schensted insertion algorithm developed by Schensted \cite{Sch61} based on work of Robinson \cite{Rob38} and later generalized by Knuth \cite{Knu70}. Haiman \cite{Hai89} generalized shifted insertion to \emph{mixed insertion} to prove a conjecture of Shor that rectification commutes with shifted insertion, thus resolving a question of Sagan \cite{Sag87}. Hawkes, Paramonov, and Schilling use Haiman's mixed insertion to define crystal operators on shifted tableaux by letting the raising and lowering operators act on the recording tableau under the mixed insertion correspondence. Rather than recall details of these algorithms, we refer the interested reader to the papers \cite{Sag87, Hai89} for details on shifted and mixed insertion, and to \cite{HPS17} for the explicit definition of the HPS operators.

\begin{proposition}
  The shifted crystal operators agree with the HPS operators.
  \label{prop:HPS}
\end{proposition}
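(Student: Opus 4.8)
The plan is to exploit the \emph{defining} property of the HPS operators rather than to match the two rule-sets case by case. By construction in \cite{HPS17}, the HPS operators are obtained by transporting the type $A$ crystal structure of Theorem~\ref{thm:young-crystal} through mixed insertion \cite{Hai89}: each semistandard shifted tableau $T$ corresponds to a pair $(\mathrm{rect}(T),Q(T))$, where $\mathrm{rect}(T)$ is a semistandard Young tableau (the rectification of $T$, i.e. the insertion tableau of its reading word) and $Q(T)$ is the associated recording datum, and the HPS lowering operator $f_i^{H}$ is defined to be the unique map on $\SSHT_n(\gamma)$ sending $(\mathrm{rect}(T),Q(T))$ to $(f_i(\mathrm{rect}(T)),Q(T))$, where $f_i$ is the type $A$ operator of Definition~\ref{def:young-lower}. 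Consequently, to prove $\st{f}_i=f_i^{H}$ (and dually $\st{e}_i=e_i^{H}$) it suffices to prove that $\st{f}_i$ commutes with mixed insertion, namely
\[
  \mathrm{rect}(\st{f}_i(T)) = f_i(\mathrm{rect}(T)) \qquad\text{and}\qquad Q(\st{f}_i(T)) = Q(T).
\]
Both identities then force $\st{f}_i=f_i^{H}$ because the correspondence $T\leftrightarrow(\mathrm{rect}(T),Q(T))$ is a bijection.

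First I would reduce this intertwining to a statement about reading words, using that the type $A$ operators are \emph{coplactic}: by Theorem~\ref{thm:young-crystal} the action of $f_i$ on a Young tableau is read off from its reading word, and $f_i$ commutes with rectification and preserves the recording datum. Thus it is enough to compare the mark-forgotten hook reading words of $T$ and $\st{f}_i(T)$. Here the groundwork is already in place: Corollary~\ref{cor:string-lengths} shows that $m_i$, evaluated on the mark-forgotten hook reading word, coincides with the type $A$ bracketing statistic of \eqref{e:max}, and the proof of Theorem~\ref{thm:crystal} shows case by case that $\st{f}_i$ leaves the set of $i$-blocked pairs unchanged and converts the rightmost $i$-free entry $\st{i},i$ into the leftmost $i$-free entry $\st{i+1},i+1$. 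After forgetting marks, this is precisely the effect of the word operator $f_i$, namely changing the rightmost $i$-unpaired letter $i$ into $i+1$; since this change is exactly what RSK recording ignores, coplacticity yields both displayed identities at once.

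The subtlety, and the step I expect to be the main obstacle, is twofold. The milder part is that cases L1(a), L1(c), L1(d), L2(a) and L2(c) toggle the marking of a \emph{second} cell, so $\st{f}_i$ is not literally a one-letter change of the hook reading word; one must check that this toggle only permutes letters \emph{within} nested $i$-blocked pairs and hence preserves the Knuth class, leaving $\mathrm{rect}$ unchanged apart from the single $i\to i+1$ move. The verification in the proof of Theorem~\ref{thm:crystal} that each such case returns, under $\st{e}_i$, to the matching inverse case R1/R2 is exactly the input needed, as it exhibits the reading-word rearrangement as transpositions internal to nested pairs. The more serious part is that our hook reading word of Definition~\ref{def:hook-word} is \emph{not} the reading word used in \cite{HPS17}, so one must show the two reading words rectify to the same semistandard Young tableau with the same recording datum. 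I would handle this by proving the two words differ by elementary (shifted) jeu-de-taquin slides, reducing the claim to a finite check on the ribbon configurations formed by the entries $\st{i},i,\st{i+1},i+1$; once the two rectifications are identified, the coplacticity argument above completes the proof.
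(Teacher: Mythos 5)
Your route is genuinely different from the paper's: the paper never touches insertion, but instead builds a direct dictionary between the explicit local rules of the HPS operators (their cases 1, 2(a), 2(b), 2(c)) and the cases L1(a)--(d), L2(a)--(c) of Definition~\ref{def:shifted-lower}, after first checking that, although the two reading words differ, they select the same cell to act on (primed entries precede unprimed ones in both orders). You instead take the insertion-based characterization of the HPS operators as primary and try to prove the intertwining $\rect(\st{f}_i(T)) = f_i(\rect(T))$ and $Q(\st{f}_i(T)) = Q(T)$. That is a legitimate strategy in principle, but as written it has a real gap at its central step.

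The gap is this: you argue that because $\st{f}_i$ preserves the set of $i$-blocked pairs and converts the rightmost $i$-free $\st{i},i$ into the leftmost $i$-free $\st{i+1},i+1$ (the content of Corollary~\ref{cor:string-lengths} and the proof of Theorem~\ref{thm:crystal}), the mark-forgotten hook reading word of $\st{f}_i(T)$ is, up to Knuth equivalence, $f_i$ applied to the mark-forgotten hook reading word of $T$. But preserving the $i$-pairing structure is strictly weaker than preserving the plactic class: in cases L1(a), L1(c), L1(d), L2(a), L2(c) the toggled second cell genuinely permutes letters of the reading word, and nothing in the proof of Theorem~\ref{thm:crystal} shows these permutations are compositions of Knuth moves. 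Without that, you cannot conclude $\rect(\st{f}_i(T)) = f_i(\rect(T))$, nor that the recording datum is preserved. The same difficulty recurs in your last step, where you assert the hook reading word and the HPS reading word differ by elementary shifted jeu-de-taquin slides and reduce to a ``finite check'': the ribbons of entries $\st{i},i,\st{i+1},i+1$ can be arbitrarily long, and in the shifted setting the order of slides matters, so this is neither finite nor supplied. In effect, carrying out your plan amounts to proving that the shifted crystal operators commute with shifted insertion --- a substantive theorem that the paper obtains as a \emph{consequence} of Proposition~\ref{prop:HPS}, not an input available to prove it. The paper's case-by-case dictionary avoids all of this by never leaving the tableau.
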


\begin{proof}
  Hawkes, Paramonov, and Schilling \cite{HPS17} define their operators, which we hereafter term the HPS operators, directly in terms of shifted insertion. In the discussion to follow, we alter the presentation in \cite{HPS17} only in switching their notation from English to French to coincide with ours.

  The HPS reading word \cite{HPS17}(p.13) is different from our hook reading word (Definition~\ref{def:hook-word}). The HPS reading word of a semistandard shifted tableau first reads all marked entries up columns from right to left, then reads all unmarked entries right to left along rows from top to bottom. They then use a bracketing rule equivalent to Definition~\ref{def:block} and select the rightmost $i$-free entry $\st{i}$ or $i$ on which to act. While our reading words differ, the choice of entry of the tableau on which to act is the same since any primed entry $\st{i}$ or $\st{i+1}$ must occur in the hook reading word before any unmarked entry $i$ or $i+1$.

  The HPS crystal operators act by first transposing the shape and promoting entries one step along the total order $\st{1} < 1 < \st{2} < 2 < \cdots$ if the selected entry on which to act is primed, then transposing and demoting after the action. With this caveat in mind, there are four cases, numbered 1, 2(a), 2(b), 2(c), for the HPS operators. We provide a dictionary between their cases and ours in Definition~\ref{def:shifted-lower} and provide details only in the one nontrivial case. The correspondence is: 
  \[ \begin{array}{lcl}
    \text{HPS} \ 1 & \leftrightarrow & L1(a)/L2(a) \\
    \text{HPS} \ 2(a) & \leftrightarrow & L1(b)/L2(b) \\
    \text{HPS} \ 2(b) & \leftrightarrow & L1(d) \\
    \text{HPS} \ 2(c) & \leftrightarrow & L1(c)/L2(c)
  \end{array}\]
  where we match with L1 when an unmarked entry is selected and with L2 when a primed entry is selected (note that HPS 2(b) is vacuous in this case). The cases are direct translations of one another with the exception of case HPS 2(c) when $\st{i}$ is selected. For this case, HPS operators first transpose and promote entries, then follow the $\st{i+1},i+1$-ribbon. Back in the original tableau, this does not correspond to the $\st{i},i$-ribbon that we following in case L2(c) of Definition~\ref{def:shifted-lower}. However, the terminal points for those ribbons, different in the two cases, in fact correspond. Therefore the operators agree in their resulting actions.
\end{proof}

%
\section{Crystals for the quantum queer Lie superalgebra}
%
\label{sec:graphs-Q}

Recently, Grantcharov, Jung, Kang, Kashiwara, and Kim \cite{GJKKK14} developed crystal bases for the quantum queer superalgebra. In this section, we review the queer crystal theory arising from $U_q(\mathfrak{q}(n))$ from the combinatorial viewpoint. In \S\ref{sec:crystal-Q}, we review queer crystal bases and define normal queer crystals as those arising from tensor products of the standard queer crystal. In \S\ref{sec:tableaux-Q}, we augment our crystal operators on semistandard shifted tableaux with an additional operator that results in a connected, normal queer crystal. In \S\ref{sec:local-Q}, we formulate an alternative local axioms for normal queer crystals, analogous to that of Stembridge \cite{Ste03}, toward a means to prove that a given queer crystal structure is normal.

\subsection{Queer crystals}
\label{sec:crystal-Q}

Using notation and terminology from \S~\ref{sec:crystal-A}, the \emph{dominant weights} $\Gamma^{+} \subset \Lambda$ are those $\lambda \in \Lambda$ such that $\lambda_1 \geq \lambda_2 \geq \cdots \geq \lambda_{r+1} \geq 0$ and $\lambda_i = \lambda_{i+1}$ implies $\lambda_i = \cdots = \lambda_{r+1} = 0$. In other words, $\Lambda^{+}$ is to partitions as $\Gamma^{+}$ is to \emph{strict} partitions. We have the following combinatorial definition for queer crystals, augmenting Definition~\ref{def:base-A}.

\begin{definition}
  A \emph{queer crystal} of dimension $r+1$ is crystal of dimension $r+1$ together with additional \emph{queer crystal operators} $e_0, f_0  :  \B \rightarrow \B \cup \{0\}$ satisfying the conditions
  \begin{enumerate}
  \item for $b,b^{\prime}\in\B$, $e_0(b)=b^{\prime}$ if and only if $f_0(b^{\prime}) = b$, and in this case we have $\wt(b^{\prime}) = \wt(b) + \alpha_1$;
  \item for $i=3,4,\ldots,r$, the operators $e_0$ and $f_0$ commute with $e_i$ and $f_i$, and if $e_0(b)\neq 0$ then $\downf_i(e_0 (b)) = \downf_i(b)$ and $\upe_i(e_0(b)) = \upe_i(b)$.    
  \end{enumerate}
  \label{def:base-Q}
\end{definition}

For example, the \emph{standard queer crystal} $\Q(n)$, for $n \in \mathbb{Z}_{>0}$, is the standard crystal $\B(n)$ together with queer crystal operator $f_0$ that acts on $\raisebox{-0.3\cellsize}{$\tableau{i}$}$ by incrementing the entry if $i=1$ or $0$ otherwise. The standard queer crystal is represented diagrammatically in Figure~\ref{fig:queer} by its queer crystal graph.

\begin{figure}[ht]
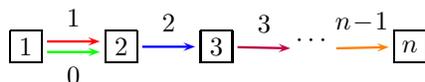

  \begin{displaymath}
    \begin{array}{c@{\hskip 2\cellsize}c@{\hskip 2\cellsize}c@{\hskip 2\cellsize}c@{\hskip 2\cellsize}c}
      \rnode{a1}{\tableau{1}} & \rnode{a2}{\tableau{2}} & \rnode{a3}{\tableau{3}} & \rnode{c}{\raisebox{0.5\cellsize}{$\cdots$}} & \rnode{a4}{\tableau{n}} 
    \end{array}
    \psset{nodesep=2pt,linewidth=.2ex}
    \ncline[offset=-2pt,linecolor=green]{->}  {a1}{a2} \nbput{0}
    \ncline[offset=2pt,linecolor=red]{->} {a1}{a2} \naput{1}
    \ncline[linecolor=blue]{->}  {a2}{a3} \naput{2}
    \ncline[linecolor=purple]{->}  {a3}{c} \naput{3}
    \ncline[linecolor=orange]{->}  {c}{a4} \naput{n\!-\!1}
  \end{displaymath}
  \caption{\label{fig:queer}The standard queer crystal.}
\end{figure}

The notion of highest weight elements is still vital to classifying queer crystals, though now the concept is not as straightforward. Given a queer crystal $\Q$ of dimension $r+1$, we define automorphisms $S_i$, for $i = 1,2,\ldots,r$ by
\[ S_i = \left\{ \begin{array}{rl}
  f_i^{\wt(b)_i - \wt(b)_{i+1}} (b) & \text{if } \wt(b)_{i} \geq \wt(b)_{i+1}, \\
  e_i^{\wt(b)_{i+1} - \wt(b)_i} (b) & \text{if } \wt(b)_{i+1} \geq \wt(b)_{i} . 
\end{array} \right. \]
Kashiwara \cite{Kas91} showed these operators satisfy the braid relations for the symmetric group, therefore to any permutation $w$ we may define $S_w$ by $S_{i_1} S_{i_2} \cdots S_{i_k}$ whenever $w = s_{i_1} s_{i_2} \cdots s_{i_k}$ is a reduced expression for $w$. For $i=1,2,\ldots,r$, define the \emph{odd crystal operators} $e_{\st{i}}, f_{\st{i}}$ by
\begin{equation} e_{\st{1}}=e_0 \hspace{3em} f_{\st{1}}=f_0 \hspace{3em}
  e_{\st{i}} = S_{w_i^{-1}} e_{0} S_{w_i} \hspace{3em} f_{\st{i}} = S_{w_i^{-1}} f_{0} S_{w_i} \text{ for } i>1
\end{equation}
where $w_i = s_2 \cdots s_i s_1 \cdots s_{i-1}$ is the shortest (in coxeter length) permutation such that $w_i \cdot \alpha_i = \alpha_1$.

\begin{definition}
  An element $b \in \Q$ of a queer crystal is a \emph{highest weight element} if $e_i(b) = 0 = e_{\st{i}}(b)$ for all $i=1,2,\ldots,r$.
  \label{def:hw-Q}
\end{definition}

Again, one of the motivating goals of crystal theory is to use these combinatorial objects to study tensor representations of the queer superalgebra, therefore again we must restrict our attention to \emph{normal queer crystals}. Connected normal queer crystals are in one-to-one correspondence with dominant weights $\Gamma^{+}$, which in turn index irreducible representations for the queer superalgebra. Given a dominant weight $\gamma \in \Gamma^{+}$, let $\Q(\gamma)$ denote the connected normal crystal with highest weight $\gamma$. Then $\mathrm{ch}(\Q(\gamma))$ is precisely the character of the irreducible representation indexed by $\gamma$, which corresponds to the Schur-$P$ polynomial $P_{\gamma}(x_1,\ldots,x_{r+1})$. As in the classical case, we have the remarkable fact that the following combinatorial procedure on queer crystals corresponds to the tensor product of the corresponding representations.

\begin{definition}
  Given two queer crystals $\Q_1$ and $\Q_2$, the \emph{tensor product} $\Q_1 \otimes \Q_2$ is the set $\Q_1 \otimes \Q_2$ together with crystal operators $e_i, f_i$ are defined on the tensor product $\Q_1 \otimes \Q_2$ by Definition~\ref{def:tensor-A} for $i=1,2,\ldots,r$, and for $i=0$ we have the additional rule
\begin{equation}
  f_0(b_1 \otimes b_2) = \left\{ \begin{array}{rl}
    f_0(b_1) \otimes b_2 & \mbox{if } \wt(b_2)_1 = \wt(b_2)_2 = 0, \\
    b_1 \otimes f_0(b_2) & \mbox{otherwise}.
  \end{array} \right.
\end{equation}
\label{def:tensor-Q}
\end{definition}

For example, Figure~\ref{fig:tensor-Q} computes the tensor product of two copies of the standard queer crystal $\Q(3)$.

\begin{figure}[ht]
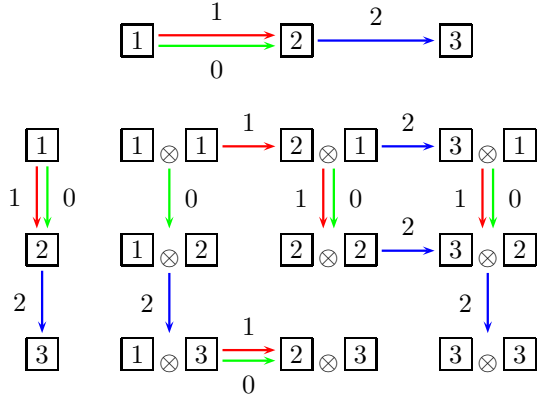

  \begin{displaymath}
    \begin{array}{c@{\hskip 2\cellsize}l@{\hskip 2\cellsize}l@{\hskip 2\cellsize}l}
                              & \rnode{a1}{\tableau{1}}               & \rnode{a2}{\tableau{2}}               & \rnode{a3}{\tableau{3}} \\[2\cellsize]
      \rnode{b1}{\tableau{1}} & \rnode{c11}{\tableau{1}\otimes\tableau{1}} & \rnode{c21}{\tableau{2}\otimes\tableau{1}} & \rnode{c31}{\tableau{3}\otimes\tableau{1}} \\[2\cellsize]
      \rnode{b2}{\tableau{2}} & \rnode{c12}{\tableau{1}\otimes\tableau{2}} & \rnode{c22}{\tableau{2}\otimes\tableau{2}} & \rnode{c32}{\tableau{3}\otimes\tableau{2}} \\[2\cellsize]
      \rnode{b3}{\tableau{3}} & \rnode{c13}{\tableau{1}\otimes\tableau{3}} & \rnode{c23}{\tableau{2}\otimes\tableau{3}} & \rnode{c33}{\tableau{3}\otimes\tableau{3}}
    \end{array}
    \psset{nodesep=2pt,linewidth=.2ex}
    \ncline[offset=-2pt,linewidth=.2ex,linecolor=green]{->}  {a1}{a2} \nbput{0}
    \ncline[offset=2pt,linewidth=.2ex,linecolor=red]{->} {a1}{a2} \naput{1}
    \ncline[linecolor=blue]{->}  {a2}{a3} \naput{2}
    \ncline[linewidth=.2ex,linecolor=red]{->} {c11}{c21} \naput{1}
    \ncline[linecolor=blue]{->}  {c21}{c31} \naput{2}
    \ncline[offset=2pt,linecolor=green]{->}  {b1}{b2} \naput{0}
    \ncline[offset=-2pt,linewidth=.2ex,linecolor=red]{->} {b1}{b2} \nbput{1}
    \ncline[linewidth=.2ex,linecolor=green]{->} {c11}{c12} \naput{0}
    \ncline[offset=2pt,linewidth=.2ex,linecolor=green]{->} {c21}{c22} \naput{0}
    \ncline[offset=-2pt,linewidth=.2ex,linecolor=red]{->} {c21}{c22} \nbput{1}
    \ncline[offset=2pt,linewidth=.2ex,linecolor=green]{->} {c31}{c32} \naput{0}
    \ncline[offset=-2pt,linewidth=.2ex,linecolor=red]{->} {c31}{c32} \nbput{1}
    \ncline[linecolor=blue]{->}  {c22}{c32} \naput{2}
    \ncline[linecolor=blue]{->}  {b2}{b3} \nbput{2}
    \ncline[linecolor=blue]{->}  {c12}{c13} \nbput{2}
    \ncline[linecolor=blue]{->}  {c32}{c33} \nbput{2}
    \ncline[offset=-2pt,linewidth=.2ex,linecolor=green]{->} {c13}{c23} \nbput{0}
    \ncline[offset=2pt,linewidth=.2ex,linecolor=red]{->} {c13}{c23} \naput{1}
  \end{displaymath}
  \caption{\label{fig:tensor-Q}The tensor product of two standard queer crystals for $U_{q}(\mathfrak{q}(3))$.}
\end{figure}

As the crystal operators $e_{\st{i}}$ and $f_{\st{i}}$ for $i>1$ are defined as a combination of other operators, they can alter both sides of the tensor. For example, we have
\[ f_{\st{2}}\left(\left(\raisebox{-0.3\cellsize}{$\tableau{1}$} \otimes \raisebox{-0.3\cellsize}{$\tableau{3}$}\right)\otimes \raisebox{-0.3\cellsize}{$\tableau{2}$}\right) = \left(\raisebox{-0.3\cellsize}{$\tableau{3}$} \otimes \raisebox{-0.3\cellsize}{$\tableau{3}$}\right)\otimes \raisebox{-0.3\cellsize}{$\tableau{1}$} . \]
Nevertheless, we have the following analog of Proposition~\ref{prop:normal-A}

\begin{proposition}
  Given two normal queer crystals $\Q_1$ and $\Q_2$, the tensor product $\Q_1 \otimes \Q_2$ is a normal crystal. Further, every connected normal queer crystal of dimension $r+1$ and degree $k$ arises as a connected component in $\Q(r+1)^{\otimes k}$, the $k$-fold tensor product of the standard crystal $\Q(r+1)$.
  \label{prop:queer-tensor}
\end{proposition}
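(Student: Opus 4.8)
The plan is to reduce the whole statement to the \emph{associativity} of the queer tensor product, since a normal queer crystal is by definition a disjoint union of connected components of tensor powers $\Q(r+1)^{\otimes k}$ of the standard queer crystal. Granting associativity, suppose $\Q_1$ is a union of connected components of $\Q(r+1)^{\otimes k_1}$ and $\Q_2$ of $\Q(r+1)^{\otimes k_2}$. Associativity gives $\Q(r+1)^{\otimes k_1} \otimes \Q(r+1)^{\otimes k_2} = \Q(r+1)^{\otimes(k_1+k_2)}$, so $\Q_1 \otimes \Q_2$ embeds in $\Q(r+1)^{\otimes(k_1+k_2)}$; since each even operator $f_i$ acts within a single tensor factor, $f_0$ likewise, and each odd operator $f_{\st{i}}$ is a composition of such, the subset $\Q_1 \otimes \Q_2$ is closed under all operators and is therefore a union of connected components, hence again normal. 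This proves the first assertion. For the second, note that every element of $\Q(r+1)^{\otimes k}$ has weight with coordinate sum $k$, so the degree of a connected normal queer crystal equals its number of standard tensor factors; thus a connected normal queer crystal of dimension $r+1$ and degree $k$ is, by definition, a connected component of $\Q(r+1)^{\otimes k}$. All the content therefore lies in associativity.

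To establish associativity I would separate the even and odd operators. The operators $e_i, f_i$ for $i = 1,\ldots,r$ restrict to the ordinary type $A$ tensor rule of Definition~\ref{def:tensor-A}, whose associativity is already part of Proposition~\ref{prop:normal-A}; consequently the Kashiwara operators $S_w$, being words in the $e_i, f_i$, commute with the reassociation isomorphism. Since for $i>1$ the odd operators are defined by conjugation, $e_{\st{i}} = S_{w_i^{-1}} e_0 S_{w_i}$ and $f_{\st{i}} = S_{w_i^{-1}} f_0 S_{w_i}$, their compatibility with reassociation follows once it is known for $e_0, f_0$ together with that of the even structure. Hence the crux reduces to verifying that $f_0$ (equivalently $e_0$) agrees on $(b_1 \otimes b_2)\otimes b_3$ and $b_1 \otimes (b_2 \otimes b_3)$.

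This last verification is the main obstacle, and I would handle it by a direct case analysis on the conditions governing the $f_0$ rule in Definition~\ref{def:tensor-Q}. Write $A$ for the condition $\wt(b_3)_1 = \wt(b_3)_2 = 0$ and $B$ for $\wt(b_2)_1 = \wt(b_2)_2 = 0$. The key observation is that, because weights are nonnegative, the condition $\wt(b_2 \otimes b_3)_1 = \wt(b_2 \otimes b_3)_2 = 0$ holds precisely when both $A$ and $B$ hold. Splitting into the three cases ``$A$ and $B$'', ``$A$ and not $B$'', and ``not $A$'', one checks that both bracketings send the triple tensor to $f_0(b_1)\otimes b_2 \otimes b_3$, to $b_1 \otimes f_0(b_2)\otimes b_3$, and to $b_1 \otimes b_2 \otimes f_0(b_3)$, respectively, so the two results coincide in every case. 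The identical bookkeeping handles $e_0$, and an induction on the number of tensor factors then upgrades this three-factor identity to full associativity, completing the reduction and hence the proposition.
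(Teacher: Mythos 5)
The paper states Proposition~\ref{prop:queer-tensor} without proof: like its type $A$ counterpart, Proposition~\ref{prop:normal-A}, it is quoted as the characterization of normality coming from the representation theory of $U_q(\mathfrak{q}(n))$ due to Grantcharov, Jung, Kang, Kashiwara, and Kim, so there is no argument in the paper to compare yours against. Taken on its own terms, your reduction is sound and the computational core is correct: the case analysis on $A\colon \wt(b_3)_1=\wt(b_3)_2=0$ and $B\colon \wt(b_2)_1=\wt(b_2)_2=0$ does verify that the $f_0$ rule of Definition~\ref{def:tensor-Q} is associative (the observation that $\wt(b_2\otimes b_3)_1=\wt(b_2\otimes b_3)_2=0$ holds exactly when $A$ and $B$ both hold is legitimate here because every element of a tensor power of the standard queer crystal has nonnegative weight coordinates), associativity of $e_0$ follows since it is the partial inverse of $f_0$, and associativity of the even rule of Definition~\ref{def:tensor-A} together with weight-preservation of reassociation handles the $S_w$ and hence the derived odd operators. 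The one point to be careful about is that your argument proves the proposition only if one \emph{defines} a normal queer crystal to be a disjoint union of connected components of tensor powers of $\Q(r+1)$: with that definition the second assertion is essentially the definition itself (your degree bookkeeping is right), and the first assertion reduces to associativity exactly as you say. The paper, however, treats normality as an external representation-theoretic notion whose direct definition is ``quite involved,'' and under that reading the proposition is a substantive theorem about crystal bases of $U_q(\mathfrak{q}(n))$-modules that the combinatorics of the tensor rule alone cannot reach. So your write-up is a correct and useful proof of the characterization-as-definition version --- which is all the paper ever uses downstream --- but you should state that definitional choice explicitly rather than asserting it is ``by definition,'' and you should not attribute the associativity of the even tensor rule to Proposition~\ref{prop:normal-A}, which does not assert it; it is a standard fact that deserves its own one-line signature-rule verification.
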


For example, Figure~\ref{fig:tensor-Q} constructs the unique normal queer crystal of dimension $3$ and degree $2$, namely $\Q((2,0,0))$, from the tensor product of two copies of the standard queer crystal $\Q(3) = \Q((1,0,0))$.

\subsection{Queer crystals for shifted tableaux}
\label{sec:tableaux-Q}

Sergeev \cite{Ser84} established that the characters of irreducible tensor representations for the queer superalgebra are Schur $P$-functions. Grantcharov, Jung, Kang, Kashiwara, and Kim \cite{GJKKK14} developed crystal bases for the quantum queer superalgebra and gave an explicit construction of the queer crystal on semistandard decomposition tableaux, the latter being another combinatorial model for Schur $P$-polynomials introduced by Serrano \cite{Ser10}. Grantcharov, Jung, Kang, Kashiwara, and Kim raised the question of whether an explicit queer crystal could be defined directly on semistandard shifted tableaux. To answer this affirmatively we have the following construction building on the crystal graph defined in \S\ref{sec:crystal-B}.

Note that in a semistandard shifted tableau, both $1$ and $\st{2}$ can only be used in the first row, and any cell on the second row is at least $2$. This observation ensures the shifted queer lowering operator below is well-defined and acts only in the first row.

\begin{figure}[ht]
  \begin{center}
    \begin{displaymath}
      \begin{array}{ccc}  
        & \rnode{a11}{\tableau{ & 2 \\ 1 & 1}} & \\[7ex]
        \rnode{b10}{\tableau{ & 2 \\ 1 & \st{2}}} & & \rnode{b12}{\tableau{ & 3 \\ 1  & 1}} \\[7ex]
        \rnode{c11l}{\tableau{ & 3 \\ 1 & \st{2}}} & & \rnode{c11r}{\tableau{ & 3 \\ 1 & 2}} \\[7ex]
        \rnode{d9}{\tableau{ & 3 \\ 1 & \st{3}}} & & \rnode{d13}{ \tableau{ & 3 \\ 2 & 2}} \\[7ex]
        & \rnode{e11}{\tableau{ & 3 \\ 2 & \st{3}}} & 
      \end{array} \hspace{\cellsize}
      \begin{array}{ccccccc}
        & & & \rnode{D1}{\tableau{1 & 1 & 1}} & & & \\[7ex]
        & & \rnode{C2}{\tableau{1 & 1 & 2}} & & \rnode{E2}{\tableau{1 & 1 & \st{2}}} & & \\[7ex]
        & \rnode{B3}{\tableau{1 & 2 & 2}} & \rnode{C3}{\tableau{1 & 1 & 3}} & \rnode{D3}{\tableau{1 & \st{2} & 2}} & & \rnode{F3}{\tableau{1 & 1 & \st{3}}} & \\[7ex]
        \rnode{A4}{\tableau{2 & 2 & 2}} & & \rnode{C4}{\tableau{1 & 2 & 3}} & \rnode{D4}{\tableau{1 & \st{2} & 3}} & & \rnode{F4}{\tableau{1 & 2 & \st{3}}} & \rnode{G4}{\tableau{1 & \st{2} & \st{3}}} \\[7ex]
        & \rnode{B5}{\tableau{2 & 2 & 3}} & \rnode{C5}{\tableau{1 & 3 & 3}} & \rnode{D5}{\tableau{1 & \st{3} & 3}} & & \rnode{F5}{\tableau{2 & 2 & \st{3}}} & \\[7ex]
        & & \rnode{C6}{\tableau{2 & 3 & 3}} & & \rnode{E6}{\tableau{2 & \st{3} & 3}} & & \\[7ex]
        & & & \rnode{D7}{\tableau{3 & 3 & 3}} & & & 
      \end{array}
      \psset{linewidth=.2ex,nodesep=3pt}      
      \ncline[linecolor=blue]{<-} {b12}{a11}  \nbput{2}
      \ncline[offset=-3pt,linecolor=red,linewidth=.2ex]{->} {a11}{b10}  \nbput{1}
      \ncline[linecolor=red,linewidth=.2ex]{->} {b12}{c11r} \naput{1}
      \ncline[linecolor=blue]{->} {b10}{c11l} \nbput{2}
      \ncline[linecolor=blue]{->} {c11l}{d9}  \nbput{2}
      \ncline[offset=2pt,linecolor=red,linewidth=.2ex]{->} {c11r}{d13} \naput{1}
      \ncline[offset=-3pt,linecolor=red,linewidth=.2ex]{->} {d9}{e11}  \nbput{1}
      \ncline[linecolor=blue]{->} {d13}{e11}  \naput{2}
      \ncline[offset=-1pt, linecolor=green,linewidth=.2ex]{<-} {e11}{d9}  \nbput{0}
      \ncline[offset=-1pt,linecolor=green,linewidth=.2ex]{<-} {b10}{a11}  \nbput{0}
      \ncline[offset=2pt,linecolor=green,linewidth=.2ex]{<-} {d13}{c11r}  \naput{0}
      \ncline[linecolor=green,linewidth=.2ex]{<-} {c11l}{b12}  \naput{0}      
      \ncline[linecolor=red]{->}  {D1}{C2}  \nbput{1}
      \ncline[linecolor=green]{->}{D1}{E2}  \naput{0}
      \ncline[linecolor=red]{->}  {C2}{B3}  \nbput{1}
      \ncline[linecolor=blue]{->} {C2}{C3}  \naput{2}
      \ncline[linecolor=green]{->}{C2}{D3}  \naput{0}
      \ncline[linecolor=red]{->}  {E2}{D3}  \nbput{1}
      \ncline[linecolor=blue]{->} {E2}{F3}  \naput{2}
      \ncline[offset=2pt,linecolor=red]{<-}  {A4}{B3}  \naput{1}
      \ncline[offset=2pt,linecolor=green]{->}{B3}{A4}  \naput{0}
      \ncline[linecolor=blue]{->} {B3}{C4}  \naput{2}
      \ncline[linecolor=red]{->}  {C3}{C4}  \naput{1}
      \ncline[linecolor=green]{->}{C3}{D4}  \naput{0}
      \ncline[linecolor=blue]{->} {D3}{D4}  \naput{2}
      \ncline[linecolor=red]{->}  {F3}{F4}  \naput{1}
      \ncline[linecolor=green]{->}{F3}{G4}  \naput{0}
      \ncline[linecolor=blue]{->} {A4}{B5}  \nbput{2}
      \ncline[offset=2pt,linecolor=red]{<-}  {B5}{C4}  \naput{1}
      \ncline[offset=2pt,linecolor=green]{->}{C4}{B5}  \naput{0}
      \ncline[linecolor=blue]{->} {C4}{C5}  \naput{2}
      \ncline[linecolor=blue]{->} {D4}{D5}  \naput{2}
      \ncline[offset=2pt,linecolor=red]{<-}  {F5}{F4}  \naput{1}
      \ncline[offset=2pt,linecolor=green]{->}{F4}{F5}  \naput{0}
      \ncline[linecolor=blue]{->} {B5}{C6}  \nbput{2}
      \ncline[offset=2pt,linecolor=red]{<-}  {C6}{C5}  \naput{1}
      \ncline[offset=2pt,linecolor=green]{->}{C5}{C6}  \naput{0}
      \ncline[linecolor=red]{->}  {D5}{E6}  \naput{1}
      \ncline[linecolor=blue]{->} {F5}{E6}  \naput{2}
      \ncline[linecolor=blue]{->} {C6}{D7}  \nbput{2}
    \end{displaymath}   
    \caption{\label{fig:P21-B} The queer crystal structures on semistandard shifted tableaux of shape $(2,1)$ and $(3)$ with entries $\{\st{1},1,\st{2},2,\st{3},3\}$ and no marks on the main diagonal.}
  \end{center}
\end{figure}

\begin{definition}
  The \emph{queer lowering operator}, denoted by $\st{f}_0$, acts on semistandard shifted tableaux by: if $T$ has no cell labeled $1$ or if $T$ has a cell labeled $\st{2}$, then $\st{f}_0(T)=0$; otherwise $\st{f}_0(T)$ changes the rightmost $1$ in the first row of $T$ to $2$ if it is on the main diagonal and to $\st{2}$ otherwise. 
  \label{def:queer-lower}
\end{definition}

For examples of the queer lowering operator on semistandard shifted tableaux, see Figures~\ref{fig:P21-B} and \ref{fig:P31-B}.

The $\st{f}_0$ edges may be identified from the normal crystal by connecting $T_1\rightarrow T_2$ if $T_1$ and $T_2$ differ in only one, which is labeled $1$ in $T_1$ and $\st{2}$ in $T_2$ if the cell is not on the main diagonal, $2$ if it is.

\begin{figure}[ht]
  \begin{center}
    \begin{displaymath}
      \begin{array}{c@{\hskip 2\cellsize}c@{\hskip 2\cellsize}c@{\hskip 2\cellsize}c@{\hskip 2\cellsize}c@{\hskip 2\cellsize}c}
        & & \rnode{c1}{\tableau{ & 2 \\ 1 & 1 & 1}} & & & \\[7ex]
        & \rnode{b2}{\tableau{ & 2 \\ 1 & 1 & 2}} & \rnode{c2}{\tableau{ & 3 \\ 1 & 1 & 1}} & \rnode{d2}{\tableau{ & 2 \\ 1 & 1 & \st{2}}} & & \\[7ex]
        \rnode{a3}{\tableau{ & 2 \\ 1 & \st{2} & 2}} & \rnode{b3}{\tableau{ & 2 \\ 1 & 1 & 3}} & \rnode{c3}{\tableau{ & 3 \\ 1 & 1 & 2}} & \rnode{d3}{\tableau{ & 3 \\ 1 & 1 & \st{2}}} & & \rnode{f3}{\tableau{ & 2 \\ 1 & 1 & \st{3}}}\\[7ex]
        \rnode{a4}{\tableau{ & 2 \\ 1 & \st{2} & 3}} & \rnode{b4}{\tableau{ & 3 \\ 1 & 1 & 3}} & \rnode{c4}{\tableau{ & 3 \\ 1 & 2 & 2}} & \rnode{d4}{\tableau{ & 3 \\ 1 & \st{2} & 2}} & \rnode{e4}{\tableau{ & 3 \\ 1 & 1 & \st{3}}} & \rnode{f4}{\tableau{ & 2 \\ 1 & \st{2} & \st{3}}}\\[7ex]
        \rnode{a5}{\tableau{ & 3 \\ 1 & \st{2} & 3}} & \rnode{b5}{\tableau{ & 3 \\ 1 & 2 & 3}} & \rnode{c5}{\tableau{ & 3 \\ 2 & 2 & 2}} & \rnode{d5}{\tableau{ & 3 \\ 1 & 2 & \st{3}}} & & \rnode{f5}{\tableau{ & 3 \\ 1 & \st{2} & \st{3}}}\\[7ex]
        & \rnode{b6}{\tableau{ & 3 \\ 1 & \st{3} & 3}} & \rnode{c6}{\tableau{ & 3 \\ 2 & 2 & 3}} & \rnode{d6}{\tableau{ & 3 \\ 2 & 2 & \st{3}}} & & \\[7ex]
        & & \rnode{c7}{\tableau{ & 3 \\ 2 & \st{3} & 3}} & & & 
      \end{array}
      \psset{nodesep=2pt,linewidth=.2ex}
      \ncline[linewidth=.2ex,linecolor=red]{->} {c1}{b2} \nbput{1}
      \ncline[linecolor=blue]{->}  {c1}{c2} \naput{2}
      \ncline[linewidth=.3ex,linecolor=green]{->}   {c1}{d2} \naput{0}
      \ncline[offset=2pt,linewidth=.3ex,linecolor=green]{->}   {b2}{a3} \nbput{1}
      \ncline[offset=2pt,linewidth=.2ex,linecolor=red]{<-} {a3}{b2} \nbput{0}
      \ncline[linecolor=blue]{->}  {b2}{b3} \naput{2}
      \ncline[linewidth=.2ex,linecolor=red]{->} {c2}{c3} \naput{1}
      \ncline[linewidth=.3ex,linecolor=green]{->}   {c2}{d3} \naput{0}
      \ncline[linecolor=blue]{->}  {d2}{d3} \naput{2}
      \ncline[linecolor=blue]{->}  {a3}{a4} \nbput{2}
      \ncline[offset=2pt,linewidth=.3ex,linecolor=green]{->}   {b3}{a4} \nbput{1}
      \ncline[offset=2pt,linewidth=.2ex,linecolor=red]{<-} {a4}{b3} \nbput{0} 
      \ncline[linecolor=blue]{->}  {b3}{b4} \naput{2}
      \ncline[linewidth=.2ex,linecolor=red]{->} {c3}{c4} \naput{1}
      \ncline[linewidth=.3ex,linecolor=green]{->}   {c3}{d4} \naput{0}
      \ncline[linewidth=.2ex,linecolor=red]{->} {d3}{d4} \naput{1}
      \ncline[linecolor=blue]{->}  {d3}{e4} \naput{2}
      \ncline[offset=2pt,linewidth=.3ex,linecolor=green]{->}   {f3}{f4} \nbput{1}
      \ncline[offset=2pt,linewidth=.2ex,linecolor=red]{<-} {f4}{f3} \nbput{0} 
      \ncline[linecolor=blue]{->}  {a4}{a5} \nbput{2}
      \ncline[linewidth=.3ex,linecolor=green]{->}   {b4}{a5} \naput{0}
      \ncline[linewidth=.2ex,linecolor=red]{->} {b4}{b5} \naput{1}
      \ncline[linecolor=blue]{->}  {c4}{b5} \nbput{2}
      \ncline[offset=2pt,linewidth=.3ex,linewidth=.3ex,linecolor=green]{->} {c4}{c5} \nbput{1}
      \ncline[offset=2pt,linewidth=.2ex,linecolor=red]{<-} {c5}{c4} \nbput{0}
      \ncline[linecolor=blue]{->}  {d4}{d5} \naput{2}
      \ncline[linewidth=.2ex,linecolor=red]{->} {e4}{d5} \naput{1}
      \ncline[linewidth=.3ex,linecolor=green]{->}   {e4}{f5} \naput{0}
      \ncline[linecolor=blue]{->}  {f4}{f5} \naput{2}
      \ncline[linecolor=blue]{->}  {a5}{b6} \nbput{2}
      \ncline[offset=2pt,linewidth=.2ex,linecolor=red]{->}{b5}{c6} \nbput{0}
      \ncline[offset=2pt,linewidth=.3ex,linecolor=green]{<-}  {c6}{b5} \nbput{1} 
      \ncline[linecolor=blue]{->}  {c5}{c6} \naput{2}
      \ncline[offset=2pt,linewidth=.3ex,linecolor=green]{->} {d5}{d6} \nbput{1}
      \ncline[offset=2pt,linewidth=.2ex,linecolor=red]{<-} {d6}{d5} \nbput{0}
      \ncline[offset=2pt,linewidth=.2ex,linecolor=red]{->}{b6}{c7} \nbput{0}
      \ncline[offset=2pt,linewidth=.3ex,linecolor=green]{<-}  {c7}{b6} \nbput{1} 
      \ncline[linecolor=blue]{->}  {c6}{c7} \naput{2}
    \end{displaymath}
    \caption{\label{fig:P31-B}The queer crystal structure on semistandard shifted tableaux of shape $(3,1)$ with entries $\{\st{1},1,\st{2},2,\st{3},3\}$ and no marks on the main diagonal.}
  \end{center}
\end{figure}

\begin{definition}
  The \emph{queer raising operator}, denoted by $\st{e}_0$, acts on semistandard shifted tableaux by: if $T$ has no cell labeled $\st{2}$ and the leftmost entry of the first row is not $2$, then $\st{e}_0(T)=0$; otherwise $\st{e}_0(T)$ changes the leftmost $\st{2}$ in the first row of $T$, if it exists, or the leftmost entry in the first row, otherwise, to $1$.
  \label{def:queer-raise}
\end{definition}

As required for a queer crystal, the queer raising and lowering operators are inverse to one another.

\begin{proposition}
  The queer raising and lowering operators satisfy $\st{e}_0(T) = T^{\prime}$ if and only if $\st{f}_0(T^{\prime}) = T$.
  \label{prop:queer-inverse}
\end{proposition}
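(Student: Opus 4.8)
The plan is to reduce the entire statement to the combinatorics of the first row, since by Definition~\ref{def:queer-lower} and Definition~\ref{def:queer-raise} both $\st{f}_0$ and $\st{e}_0$ modify only entries there. First I would record the structural constraints on the first row of a semistandard shifted tableau $T$: the leftmost cell lies on the main diagonal and so carries no marking; entries weakly increase in the order $\st{1}<1<\st{2}<2<\cdots$; the values $1$ and $\st{2}$ occur only in the first row; no $\st{1}$ can appear there (the diagonal entry is already at least $1$, and the row is weakly increasing); and at most one $\st{2}$ may appear. Hence the entries $\le 2$ in the first row form a block $1^{a}\,\st{2}^{\,\epsilon}$ with $a\ge 0$ and $\epsilon\in\{0,1\}$, followed by entries all $\ge 2$. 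The rightmost $1$ lies on the main diagonal exactly when $a=1$ (the single $1$ is then the leftmost, diagonal, cell), and a $\st{2}$, when present, is never on the diagonal, which forces $a\ge 1$ whenever $\epsilon=1$.

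With this normal form in hand, I would characterize the two domains: $\st{f}_0(T)\neq 0$ precisely when $a\ge 1$ and $\epsilon=0$, while $\st{e}_0(T)\neq 0$ precisely when $\epsilon=1$ or the diagonal entry equals $2$. The proof then splits according to which clause of each definition applies, the goal being to verify that $\st{f}_0$ carries the first domain bijectively onto the second with $\st{e}_0$ as two-sided inverse. Since both operators shift the weight by $\pm\alpha_1$ and leave every cell outside the first row untouched, it suffices to track the single altered cell.

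Concretely I would check the matched cases. If $a\ge 2$ and $\epsilon=0$, then $\st{f}_0$ changes the rightmost (off-diagonal) $1$ to $\st{2}$, producing a tableau whose unique $\st{2}$ is exactly this cell; applying $\st{e}_0$ returns that $\st{2}$ to $1$, recovering $T$. If $a=1$ and $\epsilon=0$, then $\st{f}_0$ changes the diagonal $1$ to $2$, producing a tableau with no $\st{2}$ but diagonal entry $2$, whereupon the second clause of $\st{e}_0$ restores the $1$. For the reverse direction, when $\epsilon=1$ the operator $\st{e}_0$ sends the unique $\st{2}$ to $1$; because this $\st{2}$ is necessarily off-diagonal we have $a\ge 1$, so in the image the rightmost $1$ is off-diagonal and $\st{f}_0$ recreates the $\st{2}$. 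The case of diagonal entry $2$ with $\epsilon=0$ is handled symmetrically by the diagonal clause of $\st{f}_0$.

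The only genuinely delicate point, and the one I would emphasize, is the interaction with the main diagonal: the two clauses of $\st{f}_0$ (``change to $2$ on the diagonal, to $\st{2}$ otherwise'') must align exactly with the two clauses of $\st{e}_0$ (``act on the leftmost $\st{2}$ if one exists, otherwise on the leftmost entry $2$''). The fact that makes this work is the prohibition of markings on the diagonal: a $\st{2}$ in the first row can never occupy the diagonal cell, so its presence guarantees $a\ge 1$ and thereby rules out the degenerate configuration in which $\st{e}_0$ would be forced to produce a marked diagonal cell. Confirming that this eliminates the one potential mismatch is the crux; the remaining bookkeeping is routine.
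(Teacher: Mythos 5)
Your proof is correct and takes essentially the same route as the paper's: a case analysis on the first row matching the clauses of the two definitions, with the crux in both arguments being that a $\st{2}$ can never occupy the main diagonal. Your normal form $1^{a}\,\st{2}^{\,\epsilon}$ simply makes explicit the structural facts the paper's shorter case split uses implicitly.
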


\begin{proof}
  Suppose $\st{e}_0(T) = T^{\prime}$. Since $T$ is a semistandard shifted tableau, it has at most one $\st{2}$ in its first row. There are two disjoint cases: either (i) $T$ has a $\st{2}$ in its first row, or, since $\st{e}_0(T) \neq 0$, (ii) the leftmost entry in the first row is $2$. For case (i), $\st{e}_0$ changes the $\st{2}$ in $T$ to become the rightmost $1$ in $T^{\prime}$, therefore $\st{f}_0$ will act non-trivially on $T^{\prime}$ by changing this entry back to $\st{2}$. For case (ii), $\st{e}_0$ changes the leftmost entry in the first row to a $1$ in $T^{\prime}$, therefore $\st{f}_0$ will act non-trivially on $T^{\prime}$ by changing this entry back to $2$. Thus $\st{f}_0(T^{\prime}) = T$.

  Suppose $\st{f}_0(T^{\prime}) = T$. Since $\st{f}_0(T^{\prime}) \neq 0$, $T^{\prime}$ must have a $1$ and no $\st{2}$ in the first row. Again, we have two disjoint cases: either (i) $T$ has a unique $1$ in its first row on the main diagonal, or (ii) the rightmost $1$ in the first row of $T$ is not on the main diagonal. For case (i), $\st{f}_0$ changes the unique $1$ to a $2$ in $T$, therefore $\st{e}_0$ will act non-trivially on $T$ by changing this entry back to $1$. For case (ii), $\st{f}_0$ changes the rightmost $1$ in $T^{\prime}$ to a $\st{2}$ in $T$, therefore $\st{e}_0$ will act non-trivially on $T$ by changing this entry back to $1$. Thus $\st{e}_0(T) = T^{\prime}$.
\end{proof}

As we saw in the example for $\gamma=(3,1)$ in Figure~\ref{fig:P31}, the normal crystal $(\SSHT_n(\gamma),\{\st{e}_i,\st{f}_i\}_{1 \leq i < n},\wt)$ is not always connected. However, the queer crystal obtained by augmenting this with the queer raising and lowering operators is connected. For example, see Figure~\ref{fig:P31-B}. 

\begin{lemma}
  For a strict partition $\gamma$, let $T \in \Yam(\gamma)$ have the smallest primed entry given by $\st{k+1}$ for some $k\geq 2$. Then $S_{w_k}(T)$ has a coordinate labeled $\st{2}$.
  \label{lem:connected}
\end{lemma}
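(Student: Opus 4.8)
The plan is to use the structural description of Yamanouchi tableaux from Lemma~\ref{lem:Yamanouchi} to pin down $T$ on the entries that the operators $S_1,\ldots,S_k$ can see, and then to follow a single marked cell through the reflection $S_{w_k}=S_2\cdots S_k\,S_1\cdots S_{k-1}$. First I would record that, since the smallest primed entry of $T$ is $\st{k+1}$, no primed entry $\st{2},\ldots,\st{k}$ occurs (and, as in the proof of Corollary~\ref{cor:staircase}, no $\st{1}$ occurs). Combined with Lemma~\ref{lem:Yamanouchi}, this forces the entries of value at most $k+1$ into a rigid shape: for $1\le m\le k$ the relevant part of row $m$ is a run of unmarked $m$'s possibly capped by a single $\st{k+1}$, row $k+1$ is a run of unmarked $(k+1)$'s, and all remaining entries (values $\ge k+2$) are invisible to $S_1,\ldots,S_k$. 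In particular at least one $\st{k+1}$ is present, so $\wt(T)_{k+1}\ge 1$.

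Next I would translate the target into a statement about weights and free entries. Using the reduced word for $w_k$ one computes $w_k\cdot\wt(T)=(\wt(T)_k,\wt(T)_{k+1},\wt(T)_1,\ldots,\wt(T)_{k-1},\wt(T)_{k+2},\ldots)$, so, since $S_{w_k}$ multiplies weights by $w_k$, the tableau $S_{w_k}(T)$ carries exactly $\wt(T)_{k+1}\ge 1$ cells of value $2$, that is, cells labeled $2$ or $\st{2}$. The lemma thus reduces to showing that at least one such cell is \emph{marked}. To see this I would track, step by step through the factors of $S_{w_k}$, the cell carrying a chosen $\st{k+1}$ together with the unmarked runs immediately below and to its left, using Lemma~\ref{lem:free} and Corollary~\ref{cor:string-lengths} to locate where each $\st{f}_i$ or $\st{e}_i$ making up $S_i=\st{f}_i^{\,\wt(T)_i-\wt(T)_{i+1}}$ (resp.\ $\st{e}_i^{\,\wt(T)_{i+1}-\wt(T)_i}$) acts. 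The key recurring phenomenon, already visible for $k=2$, is that when a reflection lowers a free entry sitting next to the descending primed companion, it must do so by case L1(c)/L1(d) or L2(c) of Definition~\ref{def:shifted-lower}, thereby \emph{creating} a mark; at the very next reflection that newly marked entry is $i$-paired with the primed companion and so becomes $i$-blocked, whence it is fixed by all subsequent operators. Carrying this bookkeeping to the end leaves a marked entry of value $2$, which one checks lies in the first row, giving the desired $\st{2}$.

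I expect the main obstacle to be precisely this mark bookkeeping: the shifted operators both create and destroy primes (cases L1(c)/(d), L2(c) and their raising inverses), so at each of the $2(k-1)$ reflections one must verify that the distinguished mark is neither removed prematurely nor double-counted and that the accompanying unmarked runs realign while semistandardness is preserved. I would tame this by working entirely within the value-$(\le k+1)$ subconfiguration identified in the first step and organizing the argument either by induction on $k$, peeling off $S_k$ to lower the largest relevant value, or by a case split on the row $r\le k$ in which the chosen $\st{k+1}$ sits. In each case the runs are short enough that the relevant free/blocked pattern can be read directly off the hook reading word, exactly as in the situations depicted in Figures~\ref{fig:free-L1a}, \ref{fig:free-L1c}, and~\ref{fig:free-L2c}.
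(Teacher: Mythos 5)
Your setup is sound and your reduction is correct: by Lemma~\ref{lem:Yamanouchi} and the hypothesis on the smallest primed entry, the cells of value at most $k$ form a rigid sub-diagram with row $m$ filled by unmarked $m$'s, and the weight computation $w_k\cdot\wt(T)=(\wt(T)_k,\wt(T)_{k+1},\wt(T)_1,\ldots,\wt(T)_{k-1},\ldots)$ does show that $S_{w_k}(T)$ carries exactly $\wt(T)_{k+1}\geq 1$ cells of value $2$, so the lemma amounts to showing that at least one of them is marked. This is essentially the frame the paper works in: it writes $\wt(T)=(a_1,\ldots,a_n)$ and isolates the sub-diagram $\theta$ of shape $(a_1,\ldots,a_k)$.

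The gap is that the entire combinatorial content of the lemma is deferred to the ``mark bookkeeping'' that you yourself flag as the main obstacle, and the mechanism you sketch for it targets the wrong cell. The $\st{2}$ in $S_{w_k}(T)$ is not obtained by following ``a chosen $\st{k+1}$'': the original $\st{k+1}$'s lie outside $\theta$ and are untouched by $S_1,\ldots,S_{k-1}$, which only see values at most $k$. What actually happens --- and what the paper computes explicitly --- is that the first half of the word transforms $\theta$ into the configuration whose $i$th row is $i^{\,a_k}\,\st{i+1}\,(i{+}1)^{\,a_i-a_k-1}$ for $i<k$, so that the primes $\st{2},\st{3},\ldots,\st{k}$ are created fresh, one per reflection, and line up on a common diagonal at column offset $a_k$ (in particular the $\st{2}$ sits at column $a_k+1\geq 2$, off the main diagonal); one then checks that the remaining reflections $S_2\cdots S_k$ fix the cell labeled $\st{i}$ on that diagonal together with all smaller values. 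Without producing this intermediate tableau, or some invariant carried through all $2(k-1)$ reflections, your claim that ``a mark is created and then becomes blocked'' is unverified; in particular, when $\wt(T)_{k+1}=1$ you have not ruled out that the unique value-$2$ cell ends up as an unmarked $2$. As written, the proposal is a plausible plan rather than a proof, and completing it along your own lines would in effect require reproducing the paper's explicit computation of $T'=S_{k-1}\cdots S_1(T)$.
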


\begin{proof}
  Let $(a_1,a_2,\ldots,a_n)$ denote the weight of $T$. Consider boxes on $T$ with labels $\leq k$. As $T \in \Yam(\gamma)$, we have $m_i(T)=0$ for all $i$, therefore these boxes form a sub-diagram $\theta$ of size $(a_1,a_2,\ldots, a_k)$ with all the boxes on row $i$ are labeled $i$, as illustrated in the left tableau of Figure~\ref{fig:connected}. Note that we have $a_1> a_2> \ldots> a_k \geq a_{k+1}$. 

  Let $T'$ denote  $S_{k-1} S_{k-1}\cdots S_1 (T)$. Then, $T'$ has weight $(a_k,a_1,a_2,\ldots a_{k-1},a_{k+1},a_{k+2},\ldots a_n)$ and outside $\theta$ it matches $T$ exactly. On $\theta$, the $i$th row has $a_{k}$ cells labeled $i$, one cell labeled $\st{i+1}$ and $a_i-a_{k}-1$ cells labeled ${i+1}$ for all $i<k$, and the $k$th row contains $a_k$ cells labeled $k$. In particular, the $k+1$st southwest to northeast diagonal is formed by the primed entries $\st{2},\st{3},\ldots,\st{k}$. The operation $S_{i}S_{i+1}\ldots S_{k}(T')$ preserves the cell labeled $\st{i}$ on the said diagonal, as well as all entries with labels less than $i$. This implies that $S_{w_k}(T)=S_{2}S_{3}\ldots S_{k}(T')$ contains an entry marked $\st{2}$.
\end{proof}

\begin{figure}[ht]
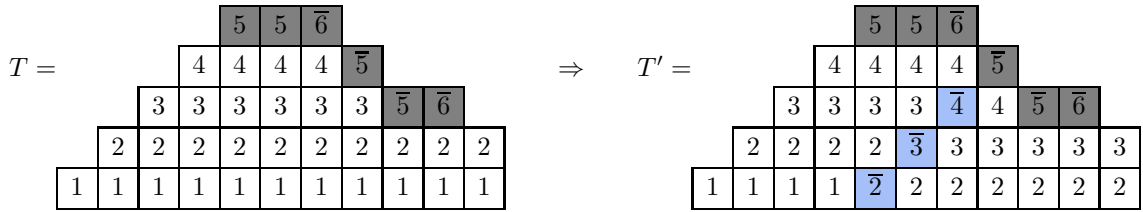

  \begin{displaymath}
    \raisebox{-.4cm}{$T=$}\begin{ytableau}
      \none &\none&\none&\none& *(gray)5 &*(gray)5 &*(gray)\st{6}\\
      \none& \none &\none & 4 & 4 & 4 & 4 &*(gray) \st{5}\\
      \none &\none& 3 & 3 & 3 & 3 & 3 & 3 & *(gray)\st{5}& *(gray)\st{6}\\
      \none & 2 & 2 & 2 & 2 & 2 & 2 & 2 & 2 & 2 &2\\
      1 & 1 & 1 & 1 & 1 & 1 & 1 &  1 & 1 & 1 & 1
    \end{ytableau}\qquad \raisebox{-.4cm}{$\Rightarrow$}\qquad \raisebox{-.4cm}{$T'=$}
    \begin{ytableau}
      \none &\none&\none&\none& *(gray)5 &*(gray)5 &*(gray)\st{6}\\
      \none& \none &\none & 4 & 4 & 4 &4 & *(gray) \st{5}\\
      \none &\none & 3 & 3 & 3 & 3 & *(lblue) \st{4} &  4 & *(gray)\st{5}& *(gray)\st{6}\\
      \none & 2 & 2 & 2 & 2 & *(lblue)\st{3} & 3 & 3 & 3 & 3 &3\\
      1 & 1 & 1 & 1 & *(lblue)\st{2} & 2 & 2 &  2 & 2 & 2 & 2
    \end{ytableau}
  \end{displaymath}
  \caption{\label{fig:connected}An example for Lemma~\ref{lem:connected}, with the outside of $\theta$ shown in gray.} 
\end{figure}

\begin{theorem}
  For $\gamma$ a strict partition, the shifted and queer raising and lowering operators $\st{e}_i, \st{f}_i$ for $i=1,2,\ldots,r$ and $\st{e}_0, \st{f}_0$ define a connected, normal queer crystal on $\SSHT_{r+1}(\gamma)$.
  \label{thm:Q-crystal}  
\end{theorem}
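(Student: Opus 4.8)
The plan is to prove the three assertions---that the operators define a queer crystal, that it is normal, and that it is connected---in that order, using Theorem~\ref{thm:shifted-crystal} for the even structure and Proposition~\ref{prop:queer-tensor} to recognize normal queer crystals. First I would check Definition~\ref{def:base-Q}. Condition~(1) is exactly Proposition~\ref{prop:queer-inverse} together with weight bookkeeping: by Definition~\ref{def:queer-lower} the operator $\st{f}_0$ deletes an entry $1$ and creates an entry $2$ or $\st{2}$, so $\wt(\st{f}_0(T)) = \wt(T) - \alpha_1$ and dually $\wt(\st{e}_0(T)) = \wt(T) + \alpha_1$. Condition~(2) follows from the locality exploited throughout \S\ref{sec:schur-P}: $\st{f}_0$ alters only first-row cells carrying a value in $\{1,\st{2},2\}$, whereas for $i \geq 3$ the operators $\st{e}_i,\st{f}_i$ inspect and change only cells with values in $\{\st{i},i,\st{i+1},i+1\}$, a disjoint set of symbols. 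Hence $\st{f}_0$ and $\st{f}_i$ act on disjoint cells, so they commute, and $\st{e}_0$ preserves every $i$-string for $i \geq 3$, giving $\upe_i(\st{e}_0(T)) = \upe_i(T)$ and $\downf_i(\st{e}_0(T)) = \downf_i(T)$.

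For normality I would realize $\SSHT_{r+1}(\gamma)$ as a union of connected components of $\Q(r+1)^{\otimes|\gamma|}$ and appeal to Proposition~\ref{prop:queer-tensor}. Define $\Phi \colon \SSHT_{r+1}(\gamma) \to \Q(r+1)^{\otimes|\gamma|}$ by sending $T$ to its hook reading word $w(T)$ with marks forgotten, so each letter is a standard-crystal vertex. That $\Phi$ intertwines the even operators $\st{e}_i,\st{f}_i$ with the tensor operators $e_i,f_i$ is essentially the content of Corollary~\ref{cor:string-lengths} and the proof of Theorem~\ref{thm:crystal}: the value $m_i$ and the free entry on which $\st{f}_i$ acts coincide with the type~$A$ tensor rule of Definition~\ref{def:tensor-A} applied to $w(T)$, the auxiliary second-cell changes being internal rearrangements that do not alter the de-marked word's $f_i$-action. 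The remaining point is that $\Phi$ also intertwines $\st{f}_0$ with the queer tensor rule of Definition~\ref{def:tensor-Q}: unwinding that rule, $f_0$ changes the rightmost letter $1$ of a word to $2$ provided no $2$ lies to its right, which I would match against $\st{f}_0$ acting on the rightmost first-row $1$. Granting that $\Phi$ is an injective morphism of queer crystals---injectivity because shape together with the hook reading word reconstructs $T$, marks included---its image is closed under all operators, hence a union of connected components of the normal queer crystal $\Q(r+1)^{\otimes|\gamma|}$, so $\SSHT_{r+1}(\gamma)$ is normal. I expect the hardest part to be precisely this odd intertwining: verifying that promoting an unmarked $1$ to a marked $\st{2}$, which relocates that letter from the row-portion to the column-portion of the hook reading word, is consistent with where the tensor $f_0$ acts, and confirming injectivity of $\Phi$.

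Finally, for connectivity I would split the even crystal into its connected components via Theorem~\ref{thm:shifted-crystal}, indexed by $\Yam(\gamma)$. The row- and column-conditions force the unique tableau of weight $\gamma$ to be $T_0$, with row $i$ equal to $i^{\gamma_i}$ and no marks; this is the dominance-maximal weight occurring, attained only by $T_0$. For any other $T \in \Yam(\gamma)$, let $\st{k+1}$ be its smallest primed entry: if $k+1 = 2$ then $\st{e}_0(T) \neq 0$, while if $k \geq 2$ then Lemma~\ref{lem:connected} produces a cell labeled $\st{2}$ in $S_{w_k}(T)$, so $e_{\st{k}}(T) = S_{w_k^{-1}}\,\st{e}_0\,S_{w_k}(T) \neq 0$. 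Thus no Yamanouchi tableau other than $T_0$ is a queer highest weight. Since every raising operator, even or odd, strictly increases the weight in dominance order and the weights are bounded above by $\gamma$, iterating raising operators from any tableau terminates at $T_0$; hence every element is connected to $T_0$ and the queer crystal is connected, so it is the normal queer crystal $\Q(\gamma)$.
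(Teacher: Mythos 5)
Your verification of the queer crystal axioms and your connectivity argument both track the paper's proof: the paper likewise reduces connectivity to showing, via Lemma~\ref{lem:Yamanouchi} and Lemma~\ref{lem:connected}, that every Yamanouchi tableau of weight other than $\gamma$ admits a nonzero odd raising operator. The genuine gap is in your normality step. The map $\Phi$ sending $T$ to its de-marked hook reading word is not a morphism of crystals into $\Q(r+1)^{\otimes|\gamma|}$, and its image is not closed under the tensor operators. The tensor-product $f_i$ of Definition~\ref{def:tensor-A} changes exactly one tensor factor and leaves every other letter in place, whereas $\st{f}_i$ in cases L1(a), L1(c), L1(d), L2(a), L2(c) modifies two cells and, crucially, toggles markings; since the hook reading order depends on which cells are marked, the letters of $w(\st{f}_i(T))$ occupy different positions than those of $w(T)$. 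Concretely, in Figure~\ref{fig:free-L1a} the de-marked words of $T$ and of $\st{f}_2(T)$ are $3\,3\,2\,3\,4\,2\,2\,2\,1\,1\,1$ and $3\,2\,3\,3\,4\,2\,2\,3\,1\,1\,1$, which differ in three positions (and still differ in three positions if you reverse the reading order), so no single application of the tensor $f_2$ sends one to the other. Your proposed odd matching also fails as stated: for the one-row tableau $T$ with entries $1\,1\,2$ one has $\st{f}_0(T)=1\,\st{2}\,2\neq 0$ (see Figure~\ref{fig:P21-B}), yet the rightmost letter of $w(T)=1\,1\,2$ lying in $\{1,2\}$ is a $2$, so the unwound tensor rule returns $0$. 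So it is not only the odd intertwining that is delicate, as you anticipated; the even intertwining already breaks for this map.

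The paper's detour through insertion is therefore essential rather than cosmetic: the identification of $\SSHT_{r+1}(\gamma)$ with components of $\Q(r+1)^{\otimes|\gamma|}$ is made by Sagan's shifted insertion (a word $b_n\cdots b_2b_1$ corresponds to $b_1\otimes b_2\otimes\cdots\otimes b_n$ via its insertion tableau), the even intertwining is inherited from Proposition~\ref{prop:HPS} and the HPS construction, and the odd intertwining is proved by induction, comparing $f_0(T\leftarrow i)$ with the queer tensor rule \eqref{e:ten} case by case. To repair your argument you would need to replace $\Phi$ by this insertion correspondence (or prove the nontrivial claim that your reading-word map becomes a morphism after composing with a suitable isomorphism of components), which is precisely the content the paper's proof supplies.
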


\begin{proof}
  Note that without the shifted raising and lowering operators, we have a collection of crystals connected to highest weights $T \in A(\gamma)$. We will show that with the added shifted operators, the shifted tableaux $T$ with weight $\gamma$ is the unique highest weight element, implying the crystal is connected. Let $T \in A(\gamma)$ be a Yamanouchi shifted tableau with $\wt(T)\neq \gamma$. By Lemma~\ref{lem:Yamanouchi}, any unmarked entry in row $i$ must equal $i$, so $T$ must contain a marked entry. If $T$ contains a cell labeled $\st{2}$, $\st{e}_1(T)\neq 0$ is an element of higher weight. Otherwise let $k>2$ denote the smallest primed entry in $T$. By Lemma \ref{lem:connected}, $S_{w_k}(T)$ has a coordinate labeled $\st{2}$, so $\st{e}_0S_{w_k}(T)\neq 0$ and consequentially $\st{e}_k(T)\neq 0$. Therefore the graph is connected.

  Definition~\ref{def:base-Q} holds for the shifted raising and lowering operators by Theorem~\ref{thm:crystal} and for the queer raising and lowering operators by Proposition~\ref{prop:queer-inverse}. To see that the queer crystal is normal, by Proposition~\ref{prop:HPS}, we know that applying Sagan's shifted insertion to the word $b_n \ldots b_2 b_1$ results in a shifted tableau that can be identified with the type A crystal basis $b_1 \otimes b_2 \otimes \cdots \otimes b_n$. We show by induction that our queer operator defined directly on shifted tableaux agrees with Definition~\ref{def:tensor-Q}, noting that the base case follows immediately from the standard queer crystal. Consider, then, a shifted tableau $T$ of degree $n-1 \geq 1$. By Definition~\ref{def:tensor-Q}, we have
  \begin{equation}
    f_0 \left( \tableau{i} \otimes T \right) = \left\{ \begin{array}{ll}
      \tableau{2} \otimes T & \mbox{if $i=1$ and $\wt(T)_1 = \wt(T)_2 = 0$,} \\
      0 & \mbox{if $i \geq 2$ and $\wt(T)_1 = \wt(T)_2 = 0$,}\\
      0 & \mbox{if $f_0(T)=0$ and either $\wt(T)_1 >0$ or $\wt(T)_2 > 0$,}\\
      \tableau{i} \otimes f_0(T) & \mbox{if $f_0(T)\neq 0$.}
    \end{array} \right.
    \label{e:ten}
  \end{equation}
  Consider the shifted insertion of $i$ into $T$, denoted by $T \leftarrow i$. If $\wt(T)_1 = \wt(T)_2 = 0$, then inserting $1$ into $T$ has the same bumping path as inserting $2$ into $T$, since all letters of $T$ are larger than both $1$ and $2$. Therefore $T \leftarrow 2$ is precisely $T \leftarrow 1$ with the $1$ changed to a $2$. In particular,
  \begin{displaymath}
    f_0(T \leftarrow 1) = T \leftarrow 2 \ \mbox{whenever} \ \wt(T)_1 = \wt(T)_2 = 0.
  \end{displaymath}
  Similarly, if $\wt(T)_1 = \wt(T)_2 = 0$, then $T \leftarrow i$ has no entry equal to $1$ for $i\geq 2$, and so 
  \begin{displaymath}
    f_0(T \leftarrow i) = 0 \ \mbox{whenever} \ \wt(T)_1 = \wt(T)_2 = 0 \mbox{ and } i \geq 2.
  \end{displaymath}
  If $f_0(T) = 0$ and either $\wt(T)_1 >0$ or $\wt(T)_2 > 0$, then either $T$ has an entry $\st{2}$ in the first row, or $T$ has only entries weakly greater than $2$ in the first row. In these cases, $T \leftarrow i$ will have the same property for $i \geq 2$. For $T \leftarrow 1$, the $1$ will either bump the $\st{2}$, if it exists, or will bump a $2$ in the first row, if it doesn't, with the result that $T \leftarrow 1$ will have a $\st{2}$ in the first row. Therefore, in all cases, 
  \begin{displaymath}
    f_0(T \leftarrow i) = 0 \ \mbox{whenever} \ f_0(T) = 0 \mbox{ and either } \wt(T)_1 > 0 \mbox{ or } \wt(T)_2 > 0.
  \end{displaymath}
  Finally, if $f_0(T) \neq 0$ and either $\wt(T)_1 >0$ or $\wt(T)_2 > 0$, then we must in fact have a $1$ in the first row and no $\st{2}$ in $T$. In this case, $T \leftarrow i$ will not affect any entries $1$, $\st{2}$, or $2$ in $T$ for $i \geq 3$, and for $i\geq 2$ will at most insert an additional $2$. In these cases, the rightmost $1$ of $T$ changing to $2$ does not alter the insertion path, so $f_0(T \leftarrow i) = f_0(T) \leftarrow i$. For the case $T \leftarrow 1$, the inserted $1$ might bump a $2$, but insodoing pushes it to a higher row since the $2$ cannot be on the diagonal in the first row (since $T$ has a $1$). Similarly, the insertion $f_0(T) \leftarrow 1$ will have the $1$ bump the newly created $\st{2}$ which will then follow the bumping path of $T \leftarrow 1$. Thus again we have $f_0(T \leftarrow 1) = f_0(T) \leftarrow 1$, and so 
  \begin{displaymath}
    f_0(T \leftarrow i) = f_0(T) \leftarrow i \ \mbox{whenever} \ f_0(T) \neq 0 .
  \end{displaymath}
  Therefore we have shown the following,
  \begin{equation}
    f_0 \left( T \leftarrow i\right) = \left\{ \begin{array}{ll}
      T \leftarrow 2 & \mbox{if $i=1$ and $\wt(T)_1 = \wt(T)_2 = 0$,} \\
      0 & \mbox{if $i \geq 2$ and $\wt(T)_1 = \wt(T)_2 = 0$,}\\
      0 & \mbox{if $f_0(T)=0$ and $\wt(T)_1 >0$ or $\wt(T)_2 > 0$,}\\
      f_0(T) \leftarrow i & \mbox{if $f_0(T) \neq 0$.}
    \end{array} \right.
    \label{e:ins}
  \end{equation}
  The result follows by comparison of cases between \eqref{e:ten} and \eqref{e:ins} and induction on $n$.
\end{proof}

Using the odd crystal operators $\st{e}_{\st{i}}$, we may characterize our normal queer crystals on semistandard shifted tableaux by their highest weights as in Definition~\ref{def:hw-Q}. For example, removing the $\st{f}_0$ edges and inserting edges $\st{f}_{\st{1}} = \st{f}_{0}$ and $\st{f}_{\st{1}} = S_{1} S_{2} \st{f}_{0} S_{2} S_{1}$ for the queer crystal for $\SSHT_3(3,1)$ results in the crystal shown in Figure~\ref{fig:P31-Q}, which clearly has a unique highest weight.

\begin{figure}[ht]
  \begin{center}
    \begin{displaymath}
      \begin{array}{c@{\hskip 2\cellsize}c@{\hskip 2\cellsize}c@{\hskip 2\cellsize}c@{\hskip 2\cellsize}c@{\hskip 2\cellsize}c}
        & & \rnode{c1}{\tableau{ & 2 \\ 1 & 1 & 1}} & & & \\[7ex]
        & \rnode{b2}{\tableau{ & 2 \\ 1 & 1 & 2}} & \rnode{c2}{\tableau{ & 3 \\ 1 & 1 & 1}} & \rnode{d2}{\tableau{ & 2 \\ 1 & 1 & \st{2}}} & & \\[7ex]
        \rnode{a3}{\tableau{ & 2 \\ 1 & \st{2} & 2}} & \rnode{b3}{\tableau{ & 2 \\ 1 & 1 & 3}} & \rnode{c3}{\tableau{ & 3 \\ 1 & 1 & 2}} & \rnode{d3}{\tableau{ & 3 \\ 1 & 1 & \st{2}}} & & \rnode{f3}{\tableau{ & 2 \\ 1 & 1 & \st{3}}}\\[7ex]
        \rnode{a4}{\tableau{ & 2 \\ 1 & \st{2} & 3}} & \rnode{b4}{\tableau{ & 3 \\ 1 & 1 & 3}} & \rnode{c4}{\tableau{ & 3 \\ 1 & 2 & 2}} & \rnode{d4}{\tableau{ & 3 \\ 1 & \st{2} & 2}} & \rnode{e4}{\tableau{ & 3 \\ 1 & 1 & \st{3}}} & \rnode{f4}{\tableau{ & 2 \\ 1 & \st{2} & \st{3}}}\\[7ex]
        \rnode{a5}{\tableau{ & 3 \\ 1 & \st{2} & 3}} & \rnode{b5}{\tableau{ & 3 \\ 1 & 2 & 3}} & \rnode{c5}{\tableau{ & 3 \\ 2 & 2 & 2}} & \rnode{d5}{\tableau{ & 3 \\ 1 & 2 & \st{3}}} & & \rnode{f5}{\tableau{ & 3 \\ 1 & \st{2} & \st{3}}}\\[7ex]
        & \rnode{b6}{\tableau{ & 3 \\ 1 & \st{3} & 3}} & \rnode{c6}{\tableau{ & 3 \\ 2 & 2 & 3}} & \rnode{d6}{\tableau{ & 3 \\ 2 & 2 & \st{3}}} & & \\[7ex]
        & & \rnode{c7}{\tableau{ & 3 \\ 2 & \st{3} & 3}} & & & 
      \end{array}
      \psset{nodesep=2pt,linewidth=.2ex}
      \ncline[linewidth=.2ex,linecolor=red]{->} {c1}{b2} \nbput{1}
      \ncline[offset=2pt,linecolor=cyan]{->}  {c1}{c2} \nbput{2}
      \ncline[offset=2pt,linecolor=blue]{<-}  {c2}{c1} \nbput{\st{2}}
      \ncline[linewidth=.3ex,linecolor=magenta]{->}   {c1}{d2} \naput{\st{1}}
      \ncline[offset=2pt,linewidth=.3ex,linecolor=magenta]{->}   {b2}{a3} \nbput{1}
      \ncline[offset=2pt,linewidth=.2ex,linecolor=red]{<-} {a3}{b2} \nbput{\st{1}}
      \ncline[offset=2pt,linecolor=cyan]{->}  {b2}{b3} \nbput{2}
      \ncline[offset=2pt,linecolor=blue]{<-}  {b3}{b2} \nbput{\st{2}}
      \ncline[linewidth=.2ex,linecolor=red]{->} {c2}{c3} \naput{1}
      \ncline[linewidth=.3ex,linecolor=magenta]{->}   {c2}{d3} \naput{\st{1}}
      \ncline[linecolor=blue]{->}  {d2}{d3} \naput{2}
      \ncline[linecolor=cyan]{->}  {d2}{f3} \naput{\st{2}}
      \ncline[linecolor=blue]{->}  {a3}{a4} \nbput{2}
      \ncline[linecolor=cyan]{->}  {a3}{d4} \naput{\st{2}}
      \ncline[offset=2pt,linewidth=.3ex,linecolor=magenta]{->}   {b3}{a4} \nbput{1}
      \ncline[offset=2pt,linewidth=.2ex,linecolor=red]{<-} {a4}{b3} \nbput{\st{1}} 
      \ncline[linecolor=blue]{->}  {b3}{b4} \naput{2}
      \ncline[linecolor=cyan]{->}  {c3}{b4} \naput{\st{2}}
      \ncline[linewidth=.2ex,linecolor=red]{->} {c3}{c4} \naput{1}
      \ncline[linewidth=.3ex,linecolor=magenta]{->}   {c3}{d4} \naput{\st{1}}
      \ncline[linewidth=.2ex,linecolor=red]{->} {d3}{d4} \naput{1}
      \ncline[offset=2pt,linecolor=cyan]{->}  {d3}{e4} \nbput{2}
      \ncline[offset=2pt,linecolor=blue]{<-}  {e4}{d3} \nbput{\st{2}}
      \ncline[offset=2pt,linewidth=.3ex,linecolor=magenta]{->}   {f3}{f4} \nbput{1}
      \ncline[offset=2pt,linewidth=.2ex,linecolor=red]{<-} {f4}{f3} \nbput{\st{1}} 
      \ncline[offset=2pt,linecolor=cyan]{->}  {a4}{a5} \nbput{2}
      \ncline[offset=2pt,linecolor=blue]{<-}  {a5}{a4} \nbput{\st{2}}
      \ncline[linewidth=.3ex,linecolor=magenta]{->}   {b4}{a5} \naput{\st{1}}
      \ncline[linewidth=.2ex,linecolor=red]{->} {b4}{b5} \naput{1}
      \ncline[linecolor=blue]{->}  {c4}{b5} \nbput{2}
      \ncline[linecolor=cyan]{->}  {c4}{d5} \naput{\st{2}}
      \ncline[offset=2pt,linewidth=.3ex,linewidth=.3ex,linecolor=magenta]{->} {c4}{c5} \nbput{1}
      \ncline[offset=2pt,linewidth=.2ex,linecolor=red]{<-} {c5}{c4} \nbput{\st{1}}
      \ncline[linecolor=blue]{->}  {d4}{d5} \naput{2}
      \ncline[linewidth=.2ex,linecolor=red]{->} {e4}{d5} \naput{1}
      \ncline[linewidth=.3ex,linecolor=magenta]{->}   {e4}{f5} \naput{\st{1}}
      \ncline[offset=2pt,linecolor=cyan]{->}  {f4}{f5} \nbput{2}
      \ncline[offset=2pt,linecolor=blue]{<-}  {f5}{f4} \nbput{\st{2}}
      \ncline[linecolor=blue]{->}  {a5}{b6} \nbput{2}
      \ncline[linecolor=cyan]{->}  {b5}{b6} \naput{\st{2}}
      \ncline[offset=2pt,linewidth=.2ex,linecolor=red]{->}{b5}{c6} \nbput{\st{1}}
      \ncline[offset=2pt,linewidth=.3ex,linecolor=magenta]{<-}  {c6}{b5} \nbput{1} 
      \ncline[linecolor=blue]{->}  {c5}{c6} \naput{2}
      \ncline[linecolor=cyan]{->}  {c5}{d6} \naput{\st{2}}
      \ncline[offset=2pt,linewidth=.3ex,linecolor=magenta]{->} {d5}{d6} \nbput{1}
      \ncline[offset=2pt,linewidth=.2ex,linecolor=red]{<-} {d6}{d5} \nbput{\st{1}}
      \ncline[offset=2pt,linewidth=.2ex,linecolor=red]{->}{b6}{c7} \nbput{\st{1}}
      \ncline[offset=2pt,linewidth=.3ex,linecolor=magenta]{<-}  {c7}{b6} \nbput{1} 
      \ncline[offset=2pt,linecolor=cyan]{->}  {c6}{c7} \nbput{2}
      \ncline[offset=2pt,linecolor=blue]{<-}  {c7}{c6} \nbput{\st{2}}
    \end{displaymath}
    \caption{\label{fig:P31-Q}The queer crystal $(\SSHT_3(3,1),\{\st{f}_1,\st{f}_{\st{1}},\st{f}_2,\st{f}_{\st{2}}\},\wt)$.}
  \end{center}
\end{figure}

By Proposition~\ref{prop:queer-tensor}, the tensor product of two normal queer crystals is again a normal queer crystal. This gives an explicit formula for the Schur $P$-expansion of a product of Schur $P$-polynomials.

\begin{corollary}
  For $\gamma,\delta$ strict partitions, we have
  \begin{equation}
    P_{\gamma} (x_1,\ldots,x_{n}) P_{\delta} (x_1,\ldots,x_{n}) = \sum_{\substack{(S,T) \in \SSHT(\gamma)\times\SSHT(\delta) \\ \st{e}_{i}(S\otimes T) = 0 = \st{e}_{\st{i}}(S\otimes T) \forall i}} P_{\wt(S)+\wt(T)}(x_1,\ldots,x_{n}) ,
  \end{equation}
  where the sum of weights is coordinate-wise. In particular, the product of Schur $P$-polynomials is Schur $P$-positive with coefficients given by
  \begin{equation}
    f_{\gamma,\delta}^{\epsilon} = \#\{(S,T) \in \SSHT(\gamma)\times\SSHT(\delta)  \mid \wt(S)+\wt(T)=\epsilon, \ \st{e}_{i}(S \otimes T) = 0 = \st{e}_{\st{i}}(S \otimes T)  \forall i \} .
  \end{equation}
  \label{cor:PP2P}
\end{corollary}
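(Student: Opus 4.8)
The plan is to run the queer analog of the argument behind Corollary~\ref{cor:highest-weights} and Corollary~\ref{cor:P2schur}, with Stembridge's type $A$ decomposition replaced by the queer structure theory of Proposition~\ref{prop:queer-tensor}. The first step is to observe that the character is multiplicative under the tensor product: since $\wt(S \otimes T) = \wt(S) + \wt(T)$ coordinate-wise, the generating polynomial factors, and by Definition~\ref{def:schur-P} the characters of the factors are exactly $P_\gamma$ and $P_\delta$, so
\[ \mathrm{ch}\bigl(\SSHT(\gamma) \otimes \SSHT(\delta)\bigr) = \mathrm{ch}\bigl(\SSHT(\gamma)\bigr)\,\mathrm{ch}\bigl(\SSHT(\delta)\bigr) = P_\gamma P_\delta . \]

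Next I would invoke Theorem~\ref{thm:Q-crystal}, which makes each of $\SSHT(\gamma)$ and $\SSHT(\delta)$ a normal queer crystal, together with Proposition~\ref{prop:queer-tensor} to conclude that $\SSHT(\gamma) \otimes \SSHT(\delta)$ is itself a normal queer crystal and hence decomposes as a disjoint union of connected normal queer crystals. By the classification of connected normal queer crystals by their highest weights in $\Gamma^{+}$ recalled in \S\ref{sec:crystal-Q}, each connected component is isomorphic to some $\Q(\epsilon)$ with $\mathrm{ch}(\Q(\epsilon)) = P_\epsilon$. Summing characters over the components yields $P_\gamma P_\delta = \sum_\epsilon f_{\gamma,\delta}^{\epsilon} P_\epsilon$ with $f_{\gamma,\delta}^{\epsilon}$ a non-negative integer counting the components of highest weight $\epsilon$, establishing Schur $P$-positivity.

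To upgrade this to the explicit count, I would identify the highest weight of each component with its highest weight element in the sense of Definition~\ref{def:hw-Q}: an element $S \otimes T$ is a highest weight element precisely when $\st{e}_i(S \otimes T) = 0 = \st{e}_{\st{i}}(S \otimes T)$ for all $i$, and its weight is $\wt(S) + \wt(T)$. Granting that each connected component contains exactly one such element, whose weight is the component's highest weight $\epsilon$, the components of highest weight $\epsilon$ are in weight-preserving bijection with the highest weight tensors of that weight, which directly produces both displayed formulas.

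The hard part — the single point that needs argument rather than bookkeeping — is justifying that each connected normal queer crystal contains a \emph{unique} highest weight element in the sense of Definition~\ref{def:hw-Q}, so that counting highest weight tensors genuinely computes the multiplicities $f_{\gamma,\delta}^{\epsilon}$. In type $A$ this uniqueness is supplied by Proposition~\ref{prop:normal-A} and its companion; in the queer setting it is the substance of the one-to-one correspondence between connected normal queer crystals and dominant weights $\Gamma^{+}$, and one must confirm both that annihilation by every $e_i$ and every odd operator $\st{e}_{\st{i}}$ singles out exactly one vertex per component and that the resulting weight lies in $\Gamma^{+}$. Once this uniqueness is in place, everything else follows formally from character additivity and the tensor rule of Definition~\ref{def:tensor-Q}.
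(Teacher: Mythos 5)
Your proposal is correct and follows essentially the same route as the paper, which states this corollary without a written proof as an immediate consequence of Theorem~\ref{thm:Q-crystal} and Proposition~\ref{prop:queer-tensor} together with the classification of connected normal queer crystals by highest weights in $\Gamma^{+}$ recalled in \S\ref{sec:crystal-Q}. The uniqueness of the highest weight element per connected component, which you rightly flag as the one non-formal input, is exactly the piece the paper imports from the cited background theory of normal queer crystals rather than reproving.
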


\subsection{Local characterization for queer crystals}
\label{sec:local-Q}

Following Stembridge \cite{Ste03}, we desire a local characterization of normal queer crystals to aide in proving that a given queer crystal is, in fact, normal.

To this end, a \emph{queer graph of dimension $r+1$} will mean a directed, colored graph $\mathcal{Y}$ with directed edges $e_i(x) {\blue \iar} x {\blue \iar} f_i(x)$ for $i = 0,1,2,\ldots,r$, and we adopt notation from \S\ref{sec:local-A}. Every queer crystal gives us a queer graph.

\begin{definition}
  A queer crystals graph $\mathcal{Y}$ is a {\em queer regular graph} if the following hold:

  \begin{itemize}
  \item[(B0)] The subgraph $\mathcal{Y}_+$ generated by edges with non-zero labels is a regular graph.
  
  \item[(B1)] all $0$ paths have length $1$, and $\upe_0(x)+\downf_0(x)=1$ if and only if $wt_1(x)+wt_2(x)>0$;

  \item[(B2)] for every vertex $x$, there is at most one edge $x {\color{ForestGreen} {\ensuremath\stackrel{0}{\longleftarrow}}} y$ and at most one edge $x {\color{ForestGreen} {\ensuremath\stackrel{0}{\longrightarrow}}} z$;

  \item[(B3)] assuming $e_0 x$ is defined, $\Delta_0 \upe_i(x) + \Delta_0 \downf_i(x) = \left\{
    \begin{array}{rl}
        2 & \;\mbox{if}\;\; i \leq 1 \\
        -1 & \;\mbox{if}\;\; i=2 \\
        0 & \;\mbox{if}\;\; i\geq 3
    \end{array}
    \right.$;
   
  \item[(B4)] assuming $e_0 x$ is defined,   $ \begin{array}{rl}
        \Delta_0 \upe_i(x)\geq0, \Delta_0 \downf_i(x) > 0 & \;\mbox{if}\;\; i=1 \\
                \Delta_0 \upe_i(x)\leq 0 , \Delta_0 \downf_i(x) \leq 0 & \;\mbox{if}\;\; i = 2 \\
                        \Delta_0 \upe_i(x)=0, \Delta_0 \downf_i(x) = 0 & \;\mbox{if}\;\; i \geq 3 \\
    \end{array} $

  \item[(B5)]For $i\geq 2$ $e_i x=e_0 y=z \Rightarrow f_i f_0 z=f_0 f_i z$; \\
             For $i=1$ or $i \geq 3$ $f_i x=f_0 y=z , x\neq y  \Rightarrow e_i e_0 z=e_0 e_i z$; 

  \item[(B6)] assuming $e_0 x$ is defined,$\begin{array}{ll}
\Delta_0 \upe_1(x)=1 \Rightarrow \downf_1(x)=0 \text{ and } e_1x=e_0 x\\
  \Delta_0 \downf_2(x)=0 \Leftrightarrow \downf_2(x)=0
    \end{array}$. 
  \end{itemize}
  \label{def:queer-regular}
\end{definition}

Axiom B0 relies on Stembridge's characterization of regular graphs (Definition~\ref{def:regular}). The other six axioms in Definition~\ref{def:queer-regular} give the analog of the corresponding axioms in Definition~\ref{def:regular} for the queer raising and lowering operators. To begin to justify our definition, we have the following result.

\begin{theorem}
  Every normal queer crystal is a regular queer graph.
\label{thm:structure-crystal-Q}
\end{theorem}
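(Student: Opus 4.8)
The plan is to reduce to the explicit tableau model and then verify the six axioms case by case, leaning on the type A results already in hand. By Proposition~\ref{prop:queer-tensor}, every normal queer crystal arises as a tensor power of the standard queer crystal, and each of its connected components is isomorphic to the connected normal queer crystal $\Q(\gamma)$ for some strict partition $\gamma$. Theorem~\ref{thm:Q-crystal} identifies this $\Q(\gamma)$ with $(\SSHT_{r+1}(\gamma),\{\st{e}_i,\st{f}_i,\st{e}_0,\st{f}_0\},\wt)$. Since conditions (B0)--(B6) are invariant under isomorphism and are local, holding for a graph if and only if they hold on each connected component, it suffices to verify them on $\SSHT_{r+1}(\gamma)$.

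First I would dispatch the axioms that follow from results already proved. Axiom (B0) asks that the subgraph $\mathcal{Y}_+$ generated by the nonzero-labelled edges be regular; but this subgraph is exactly the normal crystal of Theorem~\ref{thm:shifted-crystal}, which is regular by Stembridge's Theorem~\ref{thm:structure-crystal}. Axiom (B2) is immediate from Proposition~\ref{prop:queer-inverse}, which shows $\st{e}_0$ and $\st{f}_0$ are single-valued and mutually inverse. For (B1), I would read off from Definition~\ref{def:queer-lower} and Definition~\ref{def:queer-raise} that $\st{f}_0^2 = \st{e}_0^2 = 0$, so every $0$-string has length $1$, and that exactly one of $\st{e}_0(x),\st{f}_0(x)$ is nonzero precisely when the first row of $x$ carries an entry among $1,\st{2},2$, i.e. when $\wt(x)_1 + \wt(x)_2 > 0$; this is a short case analysis on whether $x$ has a $1$, a $\st{2}$, or a leading $2$.

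The substance lies in (B3), (B4), (B5), (B6), which record how $\st{e}_0$ interacts with the even operators. For $i \geq 3$ everything is free: since the tableau model is a genuine queer crystal (Theorem~\ref{thm:Q-crystal}), Definition~\ref{def:base-Q}(2) gives that $\st{e}_0$ commutes with $\st{e}_i,\st{f}_i$ and preserves $\upe_i,\downf_i$, yielding $\Delta_0 \upe_i = \Delta_0 \downf_i = 0$ and the required commutation in (B5). The genuine content is $i \in \{1,2\}$: here $\st{e}_0$ changes a $\st{2}$ or $2$ in the first row to a $1$, which alters the $1$-pairing (entries $\st{1},1,\st{2},2$) and the $2$-pairing (entries $\st{2},2,\st{3},3$). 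I would compute $\Delta_0 \upe_i$ and $\Delta_0 \downf_i$ directly using the characterization of $\upe_i,\downf_i$ as counts of $i$-free entries (Corollary~\ref{cor:string-lengths}), tracking precisely which free or blocked entries are created or destroyed when the chosen first-row entry is relabelled; this is the same bookkeeping used in Lemma~\ref{lem:pm1} and in the proof of Theorem~\ref{thm:shifted-crystal}. Once these increments are pinned down, (B3) and (B4) are arithmetic, and the commutation and square relations of (B5) and (B6) follow by checking that after relabelling, the entry on which $\st{e}_1$ or $\st{e}_2$ next acts is unchanged.

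The main obstacle I anticipate is exactly this $i \in \{1,2\}$ bookkeeping: because $\st{f}_0$ is defined by a global feature of the first row (its rightmost unprimed $1$, and whether that cell lies on the main diagonal) while the even operators are defined through the hook reading word and its bracketing, I must reconcile the two descriptions and handle the diagonal-versus-off-diagonal split together with the interaction with a possible $\st{2}$. The cleanest route is probably to first show that the $1$ acted on by $\st{f}_0$ is suitably $1$-free and $2$-free, so that relabelling it disturbs the bracketings of the nearby $1$- and $2$-strings only in the controlled manner demanded by (B3)--(B6); verifying this localization is where the real work concentrates.
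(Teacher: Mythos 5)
Your plan is essentially the paper's proof: the paper likewise works directly in the shifted tableau model, disposes of (B0) and (B2) from the definitions, handles $i\geq 3$ by observing that $\st{e}_i,\st{f}_i$ only touch entries $\st{i},i,\st{i+1},i+1$ and so are independent of $\st{e}_0,\st{f}_0$, and then verifies (B1), (B3), (B4), (B6) by the same free-entry bookkeeping for the $1$- and $2$-strings (splitting on whether the acted-upon $1$ is the unique one on the diagonal) and (B5) by checking that the relabelling does not move the cell selected by $\st{f}_1$ or $\st{f}_2$. The $i\in\{1,2\}$ case analysis you defer is exactly the content of the paper's argument, so your outline is sound and matches it.
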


\begin{proof}
  Axioms B0 and B2 follow directly from the definition of queer crystals. Also, for any $i\geq3$, the operators $\st{e}_i$ and $\st{f}_i$ only affect cells labeled $i$, $\st{i}$, $i+1$ or $\st{i+1}$ so they are fully independent of the queer operators $\st{e}_0$ and $\st{f}_0$.  This is enough to establish the statements for $i\geq 3$, therefore we need only look at how $0$ moves interact with $1$ and $2$ paths.

  For axiom B1, assume $f_0(x)=y$. Then either $y$ contains a $\st{2}$ or the leftmost box on its first row is labeled $2$. In either case we have $f_0y=0$, so all $0$ strings have length $1$, and $\upe_0(x)+\downf_0(x)\leq 1$. For the second part, assume $\upe_0(x)+\downf_0(x)=0$. Then $e_0(x)=0$, so $x$ contains  no $\st{2}$, and $f_0(x)=0$ implies $x$ contains no $1$. Also the leftmost box of the first row of $x$ can not be $2$ as $e_0(x)=0$, so $x$ contains no $2$ either..

  For axioms B3, B4, and B6, recall $\downf_i(T)=m_i(T)$ and $\upe_i(T)$ is equal to the difference between the number of $i+1$s and the number of $i$s to the right of $w_q$, where $q$ is the largest index such that $m_i(w(T),q) = m_i(w(T))> 0$  ( if $m_i(w(T))\leq 0$, $\upe_i(T)$ is the difference between the total number of $i+1$s and the total number of $i$s). First consider how $e_0$ affects the $1$-string. Assume $e_0(x)=y$. Then $y$ contains no $\st{2}$, and at least one $1$. If it contains a single $1$, it is on the main diagonal, and $f_0$ changes to a $2$, so that we have $e_1(x)=e_0(x)=y$, and (B1) implies $\Delta_0 \upe_1(x)= \Delta_0 \downf_1(x) = 1$. Note that in this case $m_1(w(x))=\downf_1(x)=0$. If $y$ contains $k>1$ cells  labeled $1$, $e_0$ acts by changing the rightmost $1$ to a $2'$. The length of the $\upe_1$ string is given by the number of $2$s on the first row and remains unchanged. The $m_1$ value is decreased by $2$ as the rightmost $1$ is deleted, and replaced with a $2$ that comes to the left of all other $1$s in the reading word. In this case we have $\Delta_0 \upe_1(x)=2$, $\Delta_0 \downf_1(x) = 0$. 
    
  Now let us look at the possibilities for $\Delta_0\downf_2(x)$ and $\Delta_0 \upe_2(x)$. Assume $e_0(x)=y$. Note that the difference between the reading words of $x$ and $y$ is that we have one less $2$ and an extra $1$. If $f_2(x)=0$, then  $f_2(y)=0$ as well, and the difference between the total number of $3$s and $2$s is increased by one, which means $\Delta_0 \downf_2(x)=0$ and $\Delta_0 \upe_2(x)=-1$. This deals with the case $\downf_2(x)=0$. Let us now assume $\downf_2(x)=k>0$. Assume $q$ is the largest index where $m_2$ is achieved. If $x$ has a $2'$, that comes before all the other $2$ in the reading word. Otherwise, $x$ has no cells labeled $1$, so the second row contains no $2$, and the leftmost $2$ on the first row comes before all other $2$ on the reading word. In both cases, the $2$ that turns in  to $1$ with the $e_0$ move has an index $\leq q$, so the $e_0$ move increases $m_2$ by 1, and does not change $\upe_2$. We have $\Delta_0 \downf_2(x)=-1$ and $\Delta_0 \upe_2(x)=0$.

  Finally, for axiom B5, let us first assume that we have $e_2(x)=e_0(y)=z$. The action $f_0$ on $z$ creates a new $2$ or $\st{2}$ that comes before all the other $2$ on the reading word, so $f_2(y)$ is defined, and the algorithm selects the same cell labeled $2$ as in $f_2(z)$. Furthermore as the moves (L1) are independent of the changes happening strictly to the left and weakly below the selected cell, it commutes with the action of $(f_0)$: $f_0(f_2(z))=f_2(f_0(z))$.
   Now let us assume $f_1(x)=f_0(y)=z$ with $x\neq y$. As there can be no cells labeled $\st{2}$ on the second row, and no cells labeled $\st{1}$ anywhere on a shifted tableaux, $f_1$ acts on $x$ by (L1) a,b or d of Definition \ref{def:shifted-lower}. We can further eliminate (L1) b and (L1) d, as $x\neq y$. So the first row of $z$ contains a $\st{2}$ adjacent to a $2$. $ e_1 (e_0 (z))=e_0 (e_1 (z))$ gives the tableaux where both these entries are replaced with 1.
\end{proof}

Similar to Figure~\ref{fig:P5P6} giving a graphical illustration of the local connected component of a regular crystal when considering only two string colors, the following two lemmas give graphical illustrations of the local structure of a regular queer crystal for two color components involving $0$ edges.

\begin{lemma}
  Connected components of the subgraph generated by $f_0$ and $f_1$ of a regular queer graph are of the form shown in Figure~\ref{fig:queer0-1}.
  \label{lem:queer0-1}
\end{lemma}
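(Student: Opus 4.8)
The plan is to parametrize each vertex $x$ by the pair $(\upe_1(x),\downf_1(x))$ together with the single $0$-edge it carries, and to show that the axioms B1--B6 force these data into the ladder of Figure~\ref{fig:queer0-1}. Write $(a,b)=(\upe_1(x),\downf_1(x))$, so that a $1$-edge acts by $(a,b)\mapsto(a+1,b-1)$ and the quantity $L:=a+b$, the length of the $1$-string through $x$, is constant along each $1$-string (axiom B0 guarantees the $1$-strings behave like ordinary crystal strings).

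First I would pin down the local behaviour of a single $0$-edge $v\xrightarrow{0}w$ (so $v=\st{e}_0 w$) from B3, B4, B6 specialized to $i=1$. Since B3 gives $\Delta_0\upe_1(w)+\Delta_0\downf_1(w)=2$ while B4 forces $\Delta_0\upe_1(w)\ge 0$ and $\Delta_0\downf_1(w)>0$, only two cases survive. In case (a), $\upe_1(w)=\upe_1(v)$ and $\downf_1(w)=\downf_1(v)-2$, i.e. $(a,b)\mapsto(a,b-2)$, which needs $b\ge 2$; in case (b), $\Delta_0\upe_1(w)=\Delta_0\downf_1(w)=1$, whereupon B6 yields $\downf_1(w)=0$ and $\st{e}_1 w=\st{e}_0 w$, so $v\xrightarrow{0}w$ is a doubled edge $v\xrightarrow{0,1}w$ realizing $(a,1)\mapsto(a+1,0)$. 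I would then invoke B5 in the case $i=1$: its hypothesis $x\neq y$ excludes exactly the doubled (case (b)) situation, so it applies precisely to case-(a) edges and produces commuting squares $\st{f}_0\st{f}_1=\st{f}_1\st{f}_0$. This is what guarantees that the case-(a) images of a whole $1$-string again form a single $1$-string, of length two shorter.

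With the local picture settled I would assemble the component. The global input is that both $\st{f}_0$ and $\st{f}_1$ change the weight by $-\alpha_1=\e_2-\e_1$, hence preserve $\wt_1+\wt_2$; this constant is positive on any component containing an edge (at the top of a nontrivial $1$-string one has $\wt_1\ge\downf_1\ge1$), so B1 forces \emph{every} vertex to carry exactly one $0$-edge. Let $\sigma$ be a $1$-string of maximal length $L$ in the component, with top $x^{\ast}=(0,L)$. The $0$-edge at $x^{\ast}$ cannot be incoming: an incoming case-(a) edge would originate from a string of length $L+2$, contradicting maximality, and an incoming case-(b) edge would force $\upe_1(x^{\ast})\ge1$. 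By the same reasoning every non-bottom vertex of $\sigma$ emits its $0$-edge; running down $\sigma$, the top $L-1$ vertices (those with $b\ge2$) emit case-(a) edges onto a parallel secondary string of length $L-2$ (here the B5 commutativity does the work), while the vertex $(L-1,1)$ emits the case-(b) doubled edge to the bottom $(L,0)$ of $\sigma$. Now each secondary vertex has spent its unique $0$-edge as an incoming edge, so it emits none; since $1$-edges never leave a string, the set $\sigma\cup\mathrm{sec}(\sigma)$ is already closed under $\st{e}_0,\st{f}_0,\st{e}_1,\st{f}_1$, and being connected it is the entire component. This is exactly the ladder of Figure~\ref{fig:queer0-1}, degenerating to a single doubled edge when $L=1$ and to a doubled edge with one pendant vertex when $L=2$.

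The main obstacle I anticipate is the commutativity step: extracting from the single relation in B5 the statement that the case-(a) $0$-edges carry the spine isomorphically onto the secondary $1$-string, \emph{including} the correct behaviour at the two ends of each string, where a vertex transitions between case (a) and case (b) or where $\st{f}_1$ ceases to be defined. Verifying that these boundary vertices match up, so that the secondary string has length exactly $L-2$ with no stray vertices or edges, is the delicate bookkeeping; the maximality and closure arguments are then formal consequences of the fact that every vertex carries exactly one $0$-edge.
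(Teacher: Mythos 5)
Your proposal is correct and takes essentially the same route as the paper's proof: both classify the $0$-edges into the two types $(\Delta_0\upe_1,\Delta_0\downf_1)=(0,2)$ and $(1,1)$ via B3, B4 and B6 (the latter giving the doubled edge at $\downf_1=1$), anchor on a maximal $1$-string whose maximality forces every $0$-edge there to be outgoing and of the first type except at the bottom, and glue the length-$(L-2)$ secondary string with B5 and the existence statement in B1. Your version is a bit more systematic than the paper's bottom-up sketch (which leaves the closure step implicit), with the one wobble that the inequality $\wt_1\ge\downf_1$ you use to get $\wt_1+\wt_2>0$ is not an axiom for abstract queer regular graphs --- though the paper's own proof invokes B1 with the same unstated positivity assumption.
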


\begin{figure}[ht]
  \begin{displaymath}
    \begin{array}{c@{\hskip2\cellsize}c@{\hskip2\cellsize}c@{\hskip3\cellsize}c@{\hskip2\cellsize}c@{\hskip2\cellsize}c@{\hskip3\cellsize} c@{\hskip2\cellsize}}
      \rnode{a1}{\bullet} & \rnode{b1}{\bullet} &  &  &  &  &  \\[2\cellsize]
      \rnode{a3}{\bullet} & \rnode{b3}{\bullet} &  &  &  &  &  \\[2\cellsize]
      \rnode{a4}{\bullet} & \rnode{b4}{\bullet} &  & \rnode{c4}{\bullet} & \rnode{d4}{\bullet} &  &  \\[2\cellsize]
      \rnode{a5}{\bullet} &  &  & \rnode{c5}{\bullet} &  &  & \rnode{e5}{\bullet} \\[2\cellsize] 
      \rnode{a6}{\bullet} &  &  & \rnode{c6}{\bullet} &  &  & \rnode{e6}{\bullet}\\[1\cellsize] 
      k > 2 &  &  & k = 2  &  &  & k = 1 
    \end{array} 
    \psset{linewidth=.2ex,nodesep=3pt}
    \everypsbox{\scriptstyle}
    \ncline[linecolor=red,linestyle=dotted,linewidth=.3ex] {a1}{a3}
    \ncline[linecolor=red,linestyle=dotted,linewidth=.3ex] {b1}{b3}
    \ncline[linecolor=red]{->}              {a3}{a4} \nbput{1}
    \ncline[linecolor=red]{->}              {a4}{a5} \nbput{1}
    \ncline[linecolor=red]{->}              {b3}{b4} \nbput{1}
    \ncline[linecolor=green]{->}              {a1}{b1} \nbput{0}
    \ncline[linecolor=green]{->}              {a3}{b3} \nbput{0}
    \ncline[linecolor=green]{->}              {a4}{b4} \nbput{0}
    \ncline[offset=2pt,linecolor=red]{->}{a5}{a6} \nbput{0}
    \ncline[offset=2pt,linecolor=green]{<-}  {a6}{a5} \nbput{1}      
    \ncline[linecolor=green]{->}              {c4}{d4} \nbput{0}
    \ncline[linecolor=red]{->}              {c4}{c5} \nbput{1}
    \ncline[offset=2pt,linecolor=red]{->}{c5}{c6} \nbput{0}
    \ncline[offset=2pt,linecolor=green]{<-}  {c6}{c5} \nbput{1}
    \ncline[offset=2pt,linecolor=red]{->}{e5}{e6} \nbput{0}
    \ncline[offset=2pt,linecolor=green]{<-}  {e6}{e5} \nbput{1}       
  \end{displaymath}
  \caption{\label{fig:queer0-1}Possible connected components for $f_0$ and $\st{f}_1$ in a regular queer graph.}
\end{figure}

\begin{proof}
  By (B3), every connected component will have at least one $1$ edge. Consider a maximal $1$ string  of length $k\geq 1$ on a connected component. Let $x$ be the on this string with $\downf_1(x)=0$. 

  If $f_0 x = y$ for some $y$, then as $\downf_1 (y)$ can not be less than $0$, $\Delta_0 \downf_1(x)=0$ by (B4). This can not happen as $\Delta_0 \upe_1(x)=2$ would imply $y$ is on a longer $1$ string. So $f_0 x=0$, and by (B1) there exists $y$ such that $e_0 x =y$. If $e_1 x \neq y$, by (B5) there exists $z$ that $z=e_0 e_1 x=e_1 (e_0(x))$. This can not happen either, as in this case we would have $\downf_1(e_0 (x))\neq 0$,$\Delta_0 \upe_1(e_0 (x))=2$ implying $\downf_1(x)\geq 1$.

  So, we must have $e_0(x)=e_1(x)$. If $k=1$, we are done. If $k>1$, consider $z=e_1 (e_1 (x))$. By (B1), either $e_0(z)$ or $f_0(z)$ exists. If $e_0(z)$ existed, by (B6) we would have $\Delta_0  \upe_1(e_0(x))=2$, $\Delta_0  \downf_1(e_0(x))=0$, contradicting the maximality of the $1$ string. Then $f_0(z)$ exists and is not equal to $f_1(x)$ as $0$ strings have length $1$. $f_0(z)$ satisfies $\downf_1(f_0(x))=k-2$ and $\upe_1(f_0(z))=0$, so it is on a $1$ string of size $k-2$, and the strings are connected as shown in the diagram by (B5).
\end{proof}

\begin{lemma}
  Connected components of the subgraph generated by $f_0$ and $f_2$ of a regular queer graph are of the form shown in Figure~\ref{fig:queer0-2}, with optional $f_0$ edge presented by dashed lines.
  \label{lem:queer0-2}
\end{lemma}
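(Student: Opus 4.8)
The plan is to follow the strategy of Lemma~\ref{lem:queer0-1}, but with $2$-strings in place of $1$-strings and with the subtler interaction between $0$- and $2$-edges that arises because, unlike at the index $1$, the operators $\st{f}_0$ and $\st{f}_2$ never coincide. First I would fix a connected component $C$ of the subgraph generated by $\st{f}_0$ and $\st{f}_2$. Axiom B0 guarantees that the $2$-edges alone form a regular graph, so by A1 and A2 they partition $C$ into disjoint $2$-strings, each a single directed path. Axioms B1 and B2 then show the $0$-edges form a partial matching: every $0$-string has length $1$, and a vertex $x$ carries a $0$-edge precisely when $wt_1(x)+wt_2(x)>0$. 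Thus $C$ is a family of $2$-strings glued by a matching of $0$-edges, and the problem reduces to deciding which vertices are matched and where the matching sits relative to the string heights.

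The core computation is the local transition rule extracted from axioms B3, B4 and B6 at $i=2$. Assuming $\st{e}_0 x$ is defined, B3 gives $\Delta_0 \upe_2(x) + \Delta_0 \downf_2(x) = -1$ while B4 forces each term to be nonpositive, so exactly one of them equals $-1$; B6 decides which, via $\Delta_0 \downf_2(x) = 0 \Leftrightarrow \downf_2(x)=0$. I would record the resulting dichotomy: if $\downf_2(x)=0$ then $\st{e}_0 x$ again has $\downf_2=0$ but $\upe_2$ larger by one, so the $0$-edge joins the bottoms of two $2$-strings whose lengths differ by one; whereas if $\downf_2(x)>0$ then $\st{e}_0 x$ keeps the same $\upe_2$ and has $\downf_2$ smaller by one, so the $0$-edge joins two vertices at equal height on strings whose lower portions differ by one. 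These two rules should account for the solid versus dashed $0$-edges in Figure~\ref{fig:queer0-2}.

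Next I would promote this local data to the global picture using the commutation in axiom B5. For $i=2$, B5 asserts that whenever $z=\st{e}_2 x=\st{e}_0 y$ we have $\st{f}_2\st{f}_0 z=\st{f}_0\st{f}_2 z$; this shows that once one $0$-rung ties two $2$-strings together, the remaining rungs are forced, so the $0$-edges appear exactly at the heights dictated by the transition rules. Concretely, I would begin at a vertex $x$ with $\downf_2(x)=0$ carrying a $0$-edge, which exists by B1 as soon as $C$ contains a vertex with nonzero first or second weight coordinate, and then walk up and down both strings, alternately applying the transition rules and the B5 commutation to fill in every edge. This should pin down $C$ as the configuration drawn in Figure~\ref{fig:queer0-2}, with the optional (dashed) $\st{f}_0$-edge corresponding to the boundary case of the $\downf_2(x)>0$ branch.

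The step I expect to be the main obstacle is the global rigidity together with the placement of the optional edge. The two transition rules are individually consistent with several a priori gluings, so ruling out the spurious ones requires combining B5's commutation with the length bookkeeping of B3 and B4 to show that the lengths of the two $2$-strings on either side of a $0$-matching can differ by at most one and only in the prescribed way. The appearance or absence of the dashed $\st{f}_0$-edge is an endpoint phenomenon, analogous to the $k=1$ and $k=2$ boundary cases in Lemma~\ref{lem:queer0-1}, so the delicate part is the careful top-and-bottom analysis of the two strings rather than any isolated conceptual hurdle; verifying that precisely the drawn edges survive at those endpoints is where the real care lies.
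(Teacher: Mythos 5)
Your strategy is essentially the paper's: split the component into $2$-strings (via B0) matched by length-one $0$-edges (via B1, B2), extract from B3, B4, B6 the dichotomy that a $0$-edge either preserves $\upe_2$ and drops $\downf_2$ by one (the solid rungs, when $\downf_2>0$ at the target) or preserves $\downf_2=0$ and shifts $\upe_2$ by one (the dashed bottom edge), and then propagate with B5. The paper runs exactly this argument, anchored at the vertex $x$ atop a maximal $2$-string: it rules out $f_0(x)$ (which would force $\upe_2(f_0(x))=-1$), deduces $e_0(x)$ exists with $\downf_2(e_0(x))=k-1$, pushes the rungs down both strings, and finally inspects $w=f_2^k(x)$ for the optional edge, using that $0$-strings have length one to terminate the chain.

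The one step in your plan that would fail as written is the direction of propagation. You propose to start at a vertex with $\downf_2=0$ carrying a $0$-edge and then ``walk up and down both strings,'' but B5 is asymmetric at $i=2$: its first clause yields $f_2f_0z=f_0f_2z$ whenever both $f_2z$ and $f_0z$ are defined, so it only pushes rungs \emph{downward}, while the upward clause ($e_ie_0z=e_0e_iz$) is stated only for $i=1$ or $i\geq 3$ and explicitly excludes $i=2$. Walking up from a bottom rung therefore invokes a commutation the axioms do not supply; the fix is the paper's choice of anchor at the top of a maximal $2$-string, where maximality plus B3/B4 force the $0$-edge to point \emph{into} that vertex, after which only downward propagation is ever needed. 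Two smaller points: your closing sentence assigns the dashed edge to the boundary of the $\downf_2(x)>0$ branch, whereas by your own (correct) dichotomy it arises from the $\downf_2(x)=0$ branch applied at the target of the edge; and the reason the configuration stops after two ladders is B1's requirement that all $0$-strings have length one, which you record in the setup but never invoke at the endpoints where it is actually needed.
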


\begin{figure}[ht]
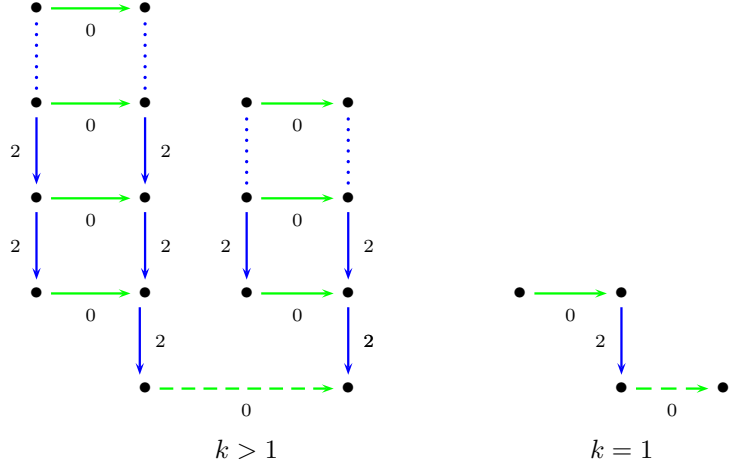

  \begin{displaymath}
    \begin{array}{c@{\hskip3\cellsize}c@{\hskip2\cellsize}c@{\hskip2\cellsize}c@{\hskip2\cellsize}c@{\hskip3\cellsize}c@{\hskip2\cellsize} c@{\hskip2\cellsize}c@{\hskip2\cellsize}}
      \rnode{a1}{\bullet} & \rnode{b1}{\bullet} &  &  &  &  & & \\[2\cellsize]
      \rnode{a2}{\bullet} & \rnode{b2}{\bullet} & \rnode{c2}{\bullet}  & \rnode{d2}{\bullet} &  & & & \\[2\cellsize]
      \rnode{a4}{\bullet} & \rnode{b4}{\bullet} & \rnode{c4}{\bullet}  & \rnode{d4}{\bullet} &  & & &  \\[2\cellsize]
      \rnode{a5}{\bullet} & \rnode{b5}{\bullet}  & \rnode{c5}{\bullet}  & \rnode{d5}{\bullet} &  &\rnode{e5}{\bullet} &\rnode{f5}{\bullet} &  \\[2\cellsize] 
      & \rnode{b6}{\bullet}  &  & \rnode{d6}{\bullet} &  & &\rnode{f6}{\bullet} &\rnode{g6}{\bullet} \\[1\cellsize] 
      &  & k>1  &  &  &  & k = 1 
    \end{array} 
    \psset{linewidth=.2ex,nodesep=3pt}
    \everypsbox{\scriptstyle}
    \ncline[linecolor=blue,linestyle=dotted,linewidth=.3ex] {a1}{a2}
    \ncline[linecolor=blue,linestyle=dotted,linewidth=.3ex] {b1}{b2}
    \ncline[linecolor=blue,linestyle=dotted,linewidth=.3ex] {c2}{c4}
    \ncline[linecolor=blue,linestyle=dotted,linewidth=.3ex] {d2}{d4}
    \ncline[linecolor=blue]{->}              {a2}{a4} \nbput{2}
    \ncline[linecolor=blue]{->}              {a4}{a5} \nbput{2}
    \ncline[linecolor=blue]{->}              {c4}{c5} \nbput{2}
    \ncline[linecolor=blue]{->}              {d5}{d6} \naput{2}
    \ncline[linecolor=blue]{->}              {d3}{d4} \naput{2}
    \ncline[linecolor=blue]{->}              {d4}{d5} \naput{2}
    \ncline[linecolor=blue]{->}              {b2}{b4} \naput{2}
    \ncline[linecolor=blue]{->}              {b4}{b5} \naput{2}
    \ncline[linecolor=green]{->}              {a5}{b5} \nbput{0}
    \ncline[linecolor=green]{->}              {a1}{b1} \nbput{0}
    \ncline[linecolor=green]{->}              {a2}{b2} \nbput{0}
    \ncline[linecolor=green]{->}              {a4}{b4} \nbput{0}
    \ncline[linecolor=green]{->}              {c5}{d5} \nbput{0}
    \ncline[linecolor=green]{->}              {c2}{d2} \nbput{0}
    \ncline[linecolor=green]{->}              {c4}{d4} \nbput{0}
    \ncline[linecolor=green,linestyle=dashed,linewidth=.2ex]{->}              {b6}{d6} \nbput{0}
    \ncline[linecolor=green,linestyle=dashed,linewidth=.2ex]{->}              {f6}{g6} \nbput{0}
    \ncline[linecolor=green]{->}              {e5}{f5} \nbput{0}
    \ncline[linecolor=blue]{->}              {f5}{f6} \nbput{2}
    \ncline[offset=2pt,linecolor=blue]{<-}  {b6}{b5} \nbput{2}      
  \end{displaymath}
  \caption{\label{fig:queer0-2}Possible connected components for $f_0$ and $f_2$ in a regular queer graph.}
\end{figure}

\begin{proof} For a connected component, let $x$ be on a maximal $2$ string with $\upe_2(x)=0$, $\downf_2(x)=k> 0$. By (B1), either $f_0(x)$ or $e_0(x)$ is non-zero. The first is not possible, as $z=f_0(x)$ and  $\downf_2(z)\leq k$ imply $\Delta_0\upe_2(z)=\upe_2(z)=-1$, which is impossible. So we must have some $z=e_0(x)$. By (B6), $\Delta_0\downf_2(x)=-1$, so that $\downf_2(y)=k-1$. (B4) implies that all the edges on the $2$ string of $y$ will commute with $0$ edges.

As any $0$ string is of length $1$, if $w:=f_2^k(x)$ is not connected to any $0$ edges, we are done. There can not be an edge $e_0(w)$ by maximality of $k$. If there is a vertex $f_0(w)$, then by maximality of $k$, $\upe_2(f_0(w))=k-1$. As $f_0(f_0(w))=0$, if $k>1$, the $2$ string of $f_0(w)$ needs to be connected to a $k-2$ string by $0$ edges that commute with $2$ edges, completing the connected component.
\end{proof}

Combining Lemmas~\ref{lem:queer0-1} and \ref{lem:queer0-2}, we have the following simple characterization of regular queer graphs.

\begin{corollary}
  A regular crystal with $0$ strings of length $1$ is a regular queer graph if and only if connected $0-1$ components are characterized as in Figure \ref{fig:queer0-1}, connected $0-2$ components are characterized as in Figure \ref{fig:queer0-2} and $0$ edges commute with $i$ strings for $i>2$.
\end{corollary}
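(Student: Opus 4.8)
The plan is to prove the two directions of the equivalence separately, using the global axioms B0--B6 of Definition~\ref{def:queer-regular} on one side and the local two-color component pictures of Lemmas~\ref{lem:queer0-1} and~\ref{lem:queer0-2} on the other. The forward direction is essentially a repackaging of results already in hand. If $\mathcal{Y}$ is a regular queer graph, then B0 is literally the hypothesis that the subgraph on non-zero edges is regular, and the length-one condition on $0$-strings is the first half of B1; Lemmas~\ref{lem:queer0-1} and~\ref{lem:queer0-2} then state exactly that the $0$--$1$ and $0$--$2$ components have the shapes of Figures~\ref{fig:queer0-1} and~\ref{fig:queer0-2}. For $i > 2$, the $i \geq 3$ cases of B3 and B4 force $\Delta_0 \upe_i(x) = \Delta_0 \downf_i(x) = 0$ whenever $e_0 x$ is defined, so the $0$-operators preserve every $i$-string length, and the $i \geq 3$ case of B5 gives $f_i f_0 = f_0 f_i$ and $e_i e_0 = e_0 e_i$; together these are precisely the assertion that $0$-edges commute with $i$-strings.

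For the converse I would assume the three structural conditions and recover B1--B6, treating B0, the length-one half of B1, and B2 as part of the standing hypothesis (B2 holding because $\st{e}_0, \st{f}_0$ are single-valued). The key observation is that every remaining axiom is local in the colors it names: B3, B4, B6 and the color-$i$ instances of B5 reference only the interaction of color $0$ with a single color $i$, and so are entirely determined by the connected component of $x$ in the relevant two-color subgraph. For $i > 2$ the commutation hypothesis immediately supplies the $i \geq 3$ cases of B3, B4, and B5, while B6 names no such $i$. For $i = 1$ I would run through the three cases $k > 2$, $k = 2$, $k = 1$ of Figure~\ref{fig:queer0-1}, reading off at each vertex carrying an outgoing $0$-edge the values $\upe_1, \downf_1$ along its $1$-string and the increments $\Delta_0 \upe_1, \Delta_0 \downf_1$; as in the computation already done in the proof of Theorem~\ref{thm:structure-crystal-Q}, these are $(2,0)$ on the long strings and $(1,1)$ at the node where the $0$- and $1$-edges coincide. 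These verify the $i \leq 1$ case of B3, the $i = 1$ case of B4, and the first clause of B6 (a unit increment occurs exactly at the coincident edge, where $\downf_1(x) = 0$ and $e_1 x = e_0 x$); the $i = 1$ case of B5 is the commuting square displayed in the figure. I would then argue identically from Figure~\ref{fig:queer0-2} to extract the $i = 2$ cases of B3, B4, the second clause of B6, and the $i = 2$ case of B5. Finally I would confirm the weight half of B1 by checking that the figures place $0$-edges exactly on the vertices whose first-row content forces $\wt(x)_1 + \wt(x)_2 > 0$.

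The hard part will be the quantitative bookkeeping in the converse: one must show the figures determine the string-length data $\upe_i, \downf_i$ finely enough to certify the exact increments demanded by B3, B4, and B6, not merely the adjacency pattern of edges. In particular, the identification of the coincident-edge vertex of Figure~\ref{fig:queer0-1} with the equality case $e_1 x = e_0 x$ of B6, and the treatment of the optional (dashed) $0$-edge at the tail of the $2$-string in Figure~\ref{fig:queer0-2}, require care to guarantee both that no local configuration permitted by the figures violates an axiom and that no axiom-compliant configuration is excluded by the figures. Once this correspondence between the vertices of each figure and the string-length increments is made precise, each axiom reduces to a finite inspection and the equivalence follows by combining the two lemmas with the $i > 2$ commutation.
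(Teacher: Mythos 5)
The paper offers no written proof of this corollary beyond the phrase ``Combining Lemmas~\ref{lem:queer0-1} and \ref{lem:queer0-2},'' so your two-directional plan is, in spirit, exactly the intended argument: the forward implication is the content of those two lemmas together with the $i\geq 3$ cases of B3--B5, and the converse is a finite local verification of B2--B6 against Figures~\ref{fig:queer0-1} and \ref{fig:queer0-2}, using the fact that each of those axioms only refers to the interaction of color $0$ with a single color $i$ and is therefore determined by the relevant two-color connected component. Your bookkeeping of the increments is consistent with the computation in the proof of Theorem~\ref{thm:structure-crystal-Q} (note the values $\Delta_0\upe_1$ and $\Delta_0\downf_1$ appear to be transposed there relative to what axiom B4 requires, so be careful which of $(2,0)$ or $(0,2)$ you actually read off the long strings; the sum is $2$ either way and the check goes through), and your identification of the coincident $0$/$1$-edge with the clause $\downf_1(x)=0$, $e_1x=e_0x$ of B6 is correct.

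The one step that would genuinely fail as written is your treatment of the second clause of B1, namely $\upe_0(x)+\downf_0(x)=1 \Leftrightarrow \wt_1(x)+\wt_2(x)>0$. You propose to verify it ``by checking that the figures place $0$-edges exactly on the vertices whose first-row content forces $\wt(x)_1+\wt(x)_2>0$,'' but the corollary is a statement about abstract regular crystals, where there is no first row: the weight map of a regular crystal is not recoverable from its colored graph (e.g.\ $\B((1,0,0))$ and $\B((2,1,1))$ are isomorphic as graphs but have different weights), so no inspection of Figures~\ref{fig:queer0-1} and \ref{fig:queer0-2} can certify this biconditional. Either the weight clause must be added as a standing hypothesis alongside ``$0$-strings have length $1$,'' or it must be argued from the given weight data of the regular crystal by some means external to the figures; as stated, your converse establishes B0, the length-one half of B1, and B2--B6, but not the weight half of B1. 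This is arguably an imprecision inherited from the corollary's own statement, but your proof should acknowledge it rather than claim to read it off the local pictures.
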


Finally, we believe that the converse of Theorem~\ref{thm:structure-crystal-Q} holds as well, as evidenced by the following.

\begin{proposition}
  Every regular queer graph of degree $3$ is a normal queer crystal.
  \label{prop:dim3}
\end{proposition}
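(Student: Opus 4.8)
The plan is to reduce to the finitely many shapes available in degree $3$ and then match the given graph against the queer crystals constructed explicitly in \S\ref{sec:tableaux-Q}. First I would invoke axiom B0: the subgraph $\mathcal{Y}_+$ generated by the nonzero-labelled edges is regular, so Theorem~\ref{thm:structure-crystal} identifies it with a normal type $A$ crystal. Hence $\mathcal{Y}_+$ decomposes as a disjoint union of connected components, each isomorphic to some $\B(\lambda)$ with $\lambda$ a partition of $3$ and each carrying a unique type-$A$ highest weight element. The only partitions of $3$ are $(3)$, $(2,1)$, and $(1,1,1)$, so $\mathrm{ch}(\mathcal{Y}) = m_{(3)} s_{(3)} + m_{(2,1)} s_{(2,1)} + m_{(1,1,1)} s_{(1,1,1)}$ for nonnegative integers $m_\lambda$. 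Since $P_{(3)} = s_{(3)} + s_{(2,1)}$ and $P_{(2,1)} = s_{(2,1)} + s_{(1,1,1)}$, the target identity to be confirmed is $m_{(2,1)} = m_{(3)} + m_{(1,1,1)}$, with $m_{(3)}$ copies of $\Q((3))$ and $m_{(1,1,1)}$ copies of $\Q((2,1))$.

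Next I would analyze the $0$-edges using the local description already established. By axiom B1 the $0$-strings have length one and a vertex $x$ is incident to a $0$-edge exactly when $\wt(x)_1 + \wt(x)_2 > 0$; by B3 and B4 the $0$-edges commute with every $i$-string for $i \geq 3$ and interact with the $1$- and $2$-strings precisely as described in Lemmas~\ref{lem:queer0-1} and \ref{lem:queer0-2}. Combined with the decomposition above, this pins down how the $0$-edges glue the $\B(\lambda)$ components together. Starting from the type-$A$ highest weight element of each component and propagating the forced $0$-edges, I would check that the weight of every \emph{queer} highest weight element --- one killed by all $e_i$ and all $e_{\st{i}}$ --- must be a strict partition, hence $(3)$ or $(2,1)$.

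Finally I would exhibit the isomorphism. For each strict partition $\gamma \in \{(3),(2,1)\}$ I would show that the connected component of $\mathcal{Y}$ containing a queer highest weight element of weight $\gamma$ is isomorphic, as a colored directed graph carrying all edges (including the $0$-edges and, via the $S_w$-conjugation, the odd edges $f_{\st{i}}$), to the explicit normal queer crystal $\Q(\gamma)$ displayed in Figure~\ref{fig:P21-B}, whose normality is guaranteed by Theorem~\ref{thm:Q-crystal}. Because the type $A$ structure of each $\B(\lambda)$ piece is rigid and the $0$-edge attachments are forced by the local axioms, this comparison is a finite verification. Summing over components then shows $\mathcal{Y}$ is a disjoint union of copies of $\Q((3))$ and $\Q((2,1))$, so by the classification of normal queer crystals (Proposition~\ref{prop:queer-tensor}) it is a normal queer crystal.

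I expect the main obstacle to be the second step, namely proving that the local axioms B1--B6 genuinely force the \emph{global} gluing to coincide with that of $\Q(\gamma)$ rather than producing some spurious configuration satisfying the axioms locally but failing to be normal. Concretely, one must verify that the forced $0$-edges never connect a $\B((3))$ component directly to a $\B((1,1,1))$ component, that the counting identity $m_{(2,1)} = m_{(3)} + m_{(1,1,1)}$ is automatic, and that the conjugated odd operators such as $f_{\st{2}} = S_{w_2}^{-1} f_0 S_{w_2}$, which may move both tensor factors, act consistently --- all of which must be extracted from the purely local data of Definition~\ref{def:queer-regular}.
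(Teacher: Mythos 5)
Your plan matches the paper's proof: it likewise reduces to dimension $3$ via axiom B5, observes that a highest weight vertex must have weight $(3,0,0)$ or $(2,1,0)$, and then uses Lemmas~\ref{lem:queer0-1} and \ref{lem:queer0-2} to show that the forced $0$-edges glue the type-$A$ components into copies of the explicit normal queer crystals of Figure~\ref{fig:P21-B}. The ``main obstacle'' you flag --- verifying that the local axioms force the global gluing rather than a spurious configuration --- is precisely the finite case-by-case propagation the paper carries out for the two possible highest weights, so your outline is correct and only that finite check remains to be executed.
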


\begin{proof}
  Let $v$ be a highest weight vertex (a priori not necessarily unique) of a connected, regular queer graph of degree $3$. By axiom (B5), all edges $f_i$ for $i\geq 3$ commute with $f_0$, so it is enough to consider dimension $3$ as well. In this case, $v$ has two possible weights, $(2,1,0)$ or $(3,0,0)$.

  Consider first the case of $\wt(v) = (2,1,0)$. As every regular queer graph is a regular graph when $f_0$ is ignored, we must have $f_1$ and $f_2$ as shown on the left side of Figure \ref{fig:conj-21}. Note that $e_0(v)=0$ as it is a highest weight, and $\downf_1(v)=1$, so we must be in case $k=1$ of Figure~\ref{fig:queer0-1}. Therefore $f_0(v) = f_1(v)$. Similarly, as an $e_0$ move from a vertex of weight $(1,0,2)$ is not possible, that vertex also has an $f_0$ edge that commutes with the $f_1$ edge, as shown in the center of Figure~\ref{fig:conj-21}. By axiom (B5) of Definition~\ref{def:queer-regular}, the vertex with weight $(2,0,1)$ has an $f_0$ edge satisfying $f_0(f_2(v))=f_2(f_0(v))$. As this $f_0$ edge decreases the $1$-head by $2$, we must be in case $k=2$ of Figure~\ref{fig:queer0-1} which gives us the final $f_0$ edge as shown on the right side of Figure~\ref{fig:conj-21}. Note that as all $0$-strings are of length $1$, no more edges are possible. Therefore the graph is exactly the normal queer crystal with highest weight $(2,1,0)$ seen in Figure~\ref{fig:P21-B}.

 \begin{figure}[ht]
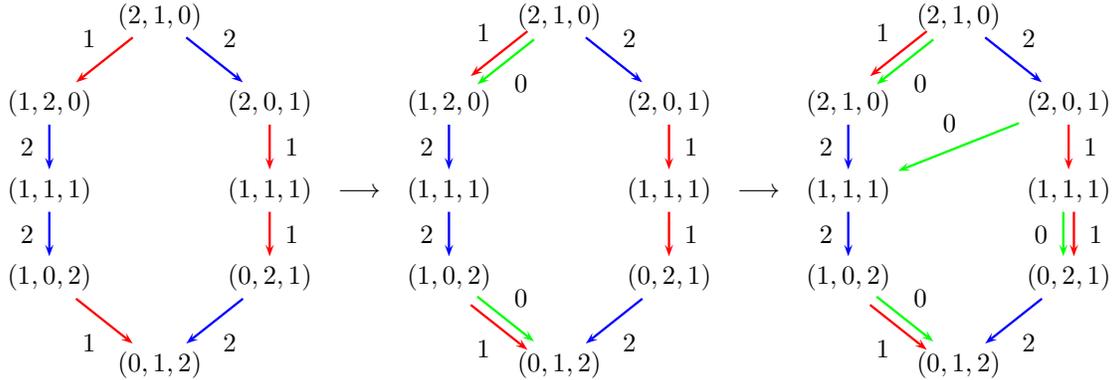

   \begin{center}
     \begin{displaymath}
       \begin{array}{ccccccccccc}  
         & \rnode{a2}{(2,1,0)} & & & &\rnode{a6}{(2,1,0)} & & & & \rnode{a11}{(2,1,0)} & \\[1.75\cellsize]
         \rnode{b1}{(1,2,0)} & & \rnode{b3}{(2,0,1)} & & \rnode{b5}{(1,2,0)} & & \rnode{b7}{(2,0,1)} & & \rnode{b10}{(2,1,0)} & & \rnode{b12}{(2,0,1)} \\[1.75\cellsize]
          \rnode{c1}{(1,1,1)} & & \rnode{c3}{(1,1,1)} &\rnode{c4}{{\longrightarrow}} & \rnode{c5}{(1,1,1)} & & \rnode{c7}{(1,1,1)} &\rnode{c8}{{\longrightarrow}} & \rnode{c11l}{(1,1,1)} & & \rnode{c11r}{(1,1,1)} \\[1.75\cellsize] 
         \rnode{d1}{(1,0,2)} & & \rnode{d3}{(0,2,1)} & \rnode{d4}{} & \rnode{d5}{ {(1,0,2)}} & & \rnode{d7}{ {(0,2,1)}} & \rnode{d8}{ } & \rnode{d9}{ {(1,0,2)}} & & \rnode{d13}{ {(0,2,1)}} \\[1.75\cellsize]
         & \rnode{e2}{(0,1,2)} & & & & \rnode{e6}{(0,1,2)} & & & & \rnode{e11}{(0,1,2)} & 
       \end{array}
       \psset{linewidth=.2ex,nodesep=3pt}
       \ncline[linecolor=blue]{->} {a2}{b3}  \naput{2}
       \ncline[linecolor=red,linewidth=.2ex]{->} {a2}{b1}  \nbput{1}
       \ncline[linecolor=red,linewidth=.2ex]{->} {b3}{c3} \naput{1}
       \ncline[linecolor=blue]{->} {b1}{c1} \nbput{2}
       \ncline[linecolor=blue]{->} {c1}{d1}  \nbput{2}
       \ncline[linecolor=red,linewidth=.2ex]{->} {c3}{d3} \naput{1}
       \ncline[linecolor=red,linewidth=.2ex]{->} {d1}{e2}  \nbput{1}
       \ncline[linecolor=blue]{->} {d3}{e2}  \naput{2}
       \ncline[linecolor=blue]{->} {a6}{b7}  \naput{2}
       \ncline[offset=-3pt,linecolor=red,linewidth=.2ex]{->} {a6}{b5}  \nbput{1}
       \ncline[linecolor=red,linewidth=.2ex]{->} {b7}{c7} \naput{1}
       \ncline[linecolor=blue]{->} {b5}{c5} \nbput{2}
       \ncline[linecolor=blue]{->} {c5}{d5}  \nbput{2}
       \ncline[linecolor=red,linewidth=.2ex]{->} {c7}{d7}  \naput{1}
       \ncline[offset=-3pt,linecolor=red,linewidth=.2ex]{->} {d5}{e6}  \nbput{1}
       \ncline[linecolor=blue]{->} {d7}{e6}  \naput{2}
       \ncline[linecolor=blue]{<-} {b12}{a11}  \nbput{2}
       \ncline[offset=-3pt,linecolor=red,linewidth=.2ex]{->} {a11}{b10}  \nbput{1}
       \ncline[linecolor=red,linewidth=.2ex]{->} {b12}{c11r} \naput{1}
       \ncline[linecolor=blue]{->} {b10}{c11l} \nbput{2}
       \ncline[linecolor=blue]{->} {c11l}{d9}  \nbput{2}
       \ncline[offset=2pt,linecolor=red,linewidth=.2ex]{->} {c11r}{d13} \naput{1}
       \ncline[offset=-3pt,linecolor=red,linewidth=.2ex]{->} {d9}{e11}   \nbput{1}
       \ncline[linecolor=blue]{->} {d13}{e11}  \naput{2}
       \ncline[offset=-1pt, linecolor=green,linewidth=.2ex]{<-} {b5}{a6}  \nbput{0}
       \ncline[offset=-1pt, linecolor=green,linewidth=.2ex]{<-} {e6}{d5}  \nbput{0}
             \ncline[offset=-1pt, linecolor=green,linewidth=.2ex]{<-} {e11}{d9}  \nbput{0}
       \ncline[offset=-1pt,linecolor=green,linewidth=.2ex]{<-} {b10}{a11}  \nbput{0}
       \ncline[offset=2pt,linecolor=green,linewidth=.2ex]{<-} {d13}{c11r}  \naput{0}
       \ncline[linecolor=green,linewidth=.2ex]{<-} {c11l}{b12}  \naput{0}      
     \end{displaymath}
     \caption{\label{fig:conj-21} Constructing the unique regular queer graph with highest weight $(2,1,0)$, where the weights of the vertices are indicated.}
   \end{center}
 \end{figure}
 
 Consider last the case of $\wt(v) = (3,0,0)$. Again, since every regular queer graph is a regular graph when $f_0$ is ignored, we must have $f_1$ and $f_2$ as shown on the left side of Figure~\ref{fig:conj-3}. Since $e_0(v)=0$ and $\downf_1(v)=3$, we must be in case $k=3$ of Figure~\ref{fig:queer0-1}. Therefore we must have another vertex, say $w$, not on this component, of weight $(2,1,0)$ such that $f_0(v) = w$ and $e_1(w) = 0$. Since $w$ is on a regular crystal, it must be a highest weight, and so we have the vertices depicted on the right of Figure~\ref{fig:conj-3}. Applying Figure~\ref{fig:queer0-1}, we have $f_0(f_1(v)) = f_1(f_0(v)) = f_1(w)$ and $f_0 f_1^3 (v) = f_1 f_0 f_1^2(v)$. Applying Figure~\ref{fig:queer0-2} to force $f_0 f_2 = f_2 f_0$ whenever both are defined at a vertex results in the situation depicted on the right side of Figure~\ref{fig:conj-3}. 
 
 \begin{figure}[ht]
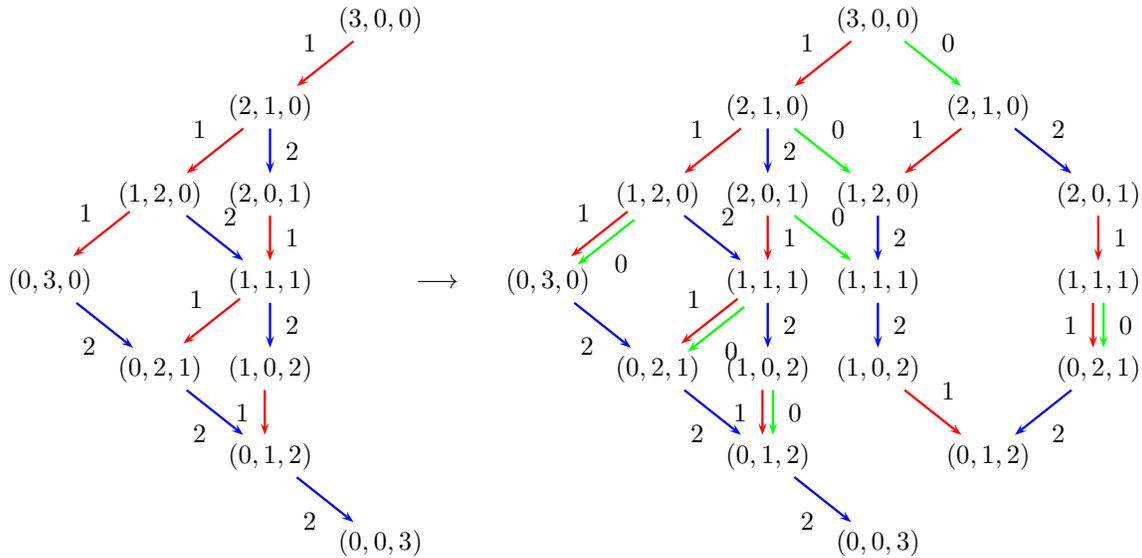

   \begin{center}
     \begin{displaymath}
       \begin{array}{cccc}
         & & & \rnode{d1}{(3,0,0)} \\[1.75\cellsize]
         & & \rnode{c2}{(2,1,0)} & \\[1.75\cellsize]
         & \rnode{b3}{(1,2,0)} & \rnode{c3}{(2,0,1)} & \\[1.75\cellsize]
         \rnode{a4}{(0,3,0)} & & \rnode{c4}{(1,1,1)} & \\[1.75\cellsize]
         & \rnode{b5}{(0,2,1)} & \rnode{c5}{(1,0,2)} & \\[1.75\cellsize]
         & & \rnode{c6}{(0,1,2)} & \\[1.75\cellsize]
         & & & \rnode{d7}{(0,0,3)} 
       \end{array}
       \hspace{-1em} \longrightarrow \hspace{1em}
       \begin{array}{cccccc}
         & & & \rnode{dD1}{(3,0,0)} & & \\[1.75\cellsize]
         & & \rnode{cC2}{(2,1,0)} & & \rnode{eE2}{(2,1,0)} & \\[1.75\cellsize]
         & \rnode{bB3}{(1,2,0)} & \rnode{cC3}{(2,0,1)} & \rnode{dD3}{(1,2,0)} & & \rnode{fF3}{(2,0,1)} \\[1.75\cellsize]
         \rnode{aA4}{(0,3,0)} & & \rnode{cC4}{(1,1,1)} & \rnode{dD4}{(1,1,1)} & & \rnode{fF4}{(1,1,1)} \\[1.75\cellsize]
         & \rnode{bB5}{(0,2,1)} & \rnode{cC5}{(1,0,2)} & \rnode{dD5}{(1,0,2)} & & \rnode{fF5}{(0,2,1)} \\[1.75\cellsize]
         & & \rnode{cC6}{(0,1,2)} & & \rnode{eE6}{(0,1,2)} & \\[1.75\cellsize]
         & & & \rnode{dD7}{(0,0,3)} & & 
       \end{array}
       \psset{linewidth=.2ex,nodesep=3pt}      
       \ncline[linecolor=red]{->}  {d1}{c2}  \nbput{1}
       \ncline[linecolor=red]{->}  {c2}{b3}  \nbput{1}
       \ncline[linecolor=blue]{->} {c2}{c3}  \naput{2}
       \ncline[offset=2pt,linecolor=red]{<-}  {a4}{b3}  \naput{1}
       \ncline[linecolor=blue]{->} {b3}{c4}  \naput{2}
       \ncline[linecolor=red]{->}  {c3}{c4}  \naput{1}
       \ncline[linecolor=blue]{->} {a4}{b5}  \nbput{2}
       \ncline[offset=2pt,linecolor=red]{<-}  {b5}{c4}  \naput{1}
       \ncline[linecolor=blue]{->} {c4}{c5}  \naput{2}
       \ncline[linecolor=blue]{->} {b5}{c6}  \nbput{2}
       \ncline[offset=2pt,linecolor=red]{<-}  {c6}{c5}  \naput{1}
       \ncline[linecolor=blue]{->} {c6}{d7}  \nbput{2}
       \ncline[linecolor=red]{->}  {dD1}{cC2}  \nbput{1}
       \ncline[linecolor=green]{->}{dD1}{eE2}  \naput{0}
       \ncline[linecolor=red]{->}  {cC2}{bB3}  \nbput{1}
       \ncline[linecolor=blue]{->} {cC2}{cC3}  \naput{2}
       \ncline[linecolor=green]{->}{cC2}{dD3}  \naput{0}
       \ncline[linecolor=red]{->}  {eE2}{dD3}  \nbput{1}
       \ncline[linecolor=blue]{->} {eE2}{fF3}  \naput{2}
       \ncline[offset=2pt,linecolor=red]{<-}  {aA4}{bB3}  \naput{1}
       \ncline[offset=2pt,linecolor=green]{->}{bB3}{aA4}  \naput{0}
       \ncline[linecolor=blue]{->} {bB3}{cC4}  \naput{2}
       \ncline[linecolor=red]{->}  {cC3}{cC4}  \naput{1}
       \ncline[linecolor=green]{->}{cC3}{dD4}  \naput{0}
       \ncline[linecolor=blue]{->} {dD3}{dD4}  \naput{2}
       \ncline[linecolor=red]{->}  {fF3}{fF4}  \naput{1}
       \ncline[linecolor=blue]{->} {aA4}{bB5}  \nbput{2}
       \ncline[offset=2pt,linecolor=red]{<-}  {bB5}{cC4}  \naput{1}
       \ncline[offset=2pt,linecolor=green]{->}{cC4}{bB5}  \naput{0}
       \ncline[linecolor=blue]{->} {cC4}{cC5}  \naput{2}
       \ncline[linecolor=blue]{->} {dD4}{dD5}  \naput{2}
       \ncline[offset=2pt,linecolor=red]{<-}  {fF5}{fF4}  \naput{1}
       \ncline[offset=2pt,linecolor=green]{->}{fF4}{fF5}  \naput{0}
       \ncline[linecolor=blue]{->} {bB5}{cC6}  \nbput{2}
       \ncline[offset=2pt,linecolor=red]{<-}  {cC6}{cC5}  \naput{1}
       \ncline[offset=2pt,linecolor=green]{->}{cC5}{cC6}  \naput{0}
       \ncline[linecolor=red]{->}  {dD5}{eE6}  \naput{1}
       \ncline[linecolor=blue]{->} {fF5}{eE6}  \naput{2}
       \ncline[linecolor=blue]{->} {cC6}{dD7}  \nbput{2}
       %
     \end{displaymath}
     \caption{\label{fig:conj-3} Constructing the unique regular queer graph with highest weight $(3,0,0)$, where the weights of the vertices are indicated.}
   \end{center}
 \end{figure}

 By axiom (B5), we must have $e_0(f_2(w)) = 0$. Since $\downf_1(f_2(w))=2$, we must be in case $k=2$ of Figure~\ref{fig:queer0-1}, and so we must have another vertex, say $x$, not yet in the picture, of weight $(1,1,1)$ such that $f_0(f_2(w)) = x$ and $e_1(x) = 0$. By Figure~\ref{fig:queer0-2}, we must also have $e_2(x)=0$. Therefore, since $x$ is on a regular crystal of dimension $3$, it is a highest weight, and so $f_1(x) = f_2(x) = 0$. This completes the picture, and we have a graph isomorphic to the normal queer crystal with highest weight $(3,0,0)$ seen in Figure~\ref{fig:P21-B}. 
\end{proof}

As noted in the introduction, our original announcement of our constructions and results \cite{AO18} conjectured that every queer regular graph is a normal queer crystal. However, this is not the case. The primary difference between our characterization and that of Stembridge \cite{Ste03}, is that we do not give explicit conditions for when the potential $0$-edge (dashed in Figure~\ref{fig:queer0-2}) is present or not for the $f_0, f_2$ components of the queer crystal. We believe that making this precise will lead to a full characterization of normal queer crystals. As demonstrated in our use of Stembridge's axioms to prove our shifted crystal operators form a crystal, such a characterization will provide a powerful tool in the study of Schur $P$-positive polynomials. 

%
%

\bibliographystyle{amsalpha} 
\bibliography{bohnert}

\newcommand{\etalchar}[1]{$^{#1}$}
\providecommand{\bysame}{\leavevmode\hbox to3em{\hrulefill}\thinspace}
\providecommand{\MR}{\relax\ifhmode\unskip\space\fi MR }
\providecommand{\MRhref}[2]{%
  \href{http://www.ams.org/mathscinet-getitem?mr=#1}{#2}
}
\providecommand{\href}[2]{#2}
\begin{thebibliography}{BHRY14}

\bibitem[AO18]{AO18}
Sami Assaf and Ezgi~Kantarci Oguz, \emph{Crystal graphs for shifted tableaux},
  S\'em. Lothar. Combin. \textbf{80B} (2018), Art. 26, 12pp.

\bibitem[Ass15]{Ass15}
Sami~H. Assaf, \emph{Dual equivalence graphs {I}: {A} new paradigm for {S}chur
  positivity}, Forum Math. Sigma \textbf{3} (2015), e12, 33. \MR{3376739}

\bibitem[Ass18]{Ass18}
\bysame, \emph{Shifted dual equivalence and {S}chur {P}-positivity}, J. Comb.
  \textbf{9} (2018), no.~2, 279--308.

\bibitem[BH95]{BH95}
Sara Billey and Mark Haiman, \emph{Schubert polynomials for the classical
  groups}, J. Amer. Math. Soc. \textbf{8} (1995), no.~2, 443--482. \MR{1290232}

\bibitem[BHRY14]{BHRY14}
Sara Billey, Zachary Hamaker, Austin Roberts, and Benjamin Young,
  \emph{Coxeter-{K}nuth graphs and a signed {L}ittle map for type {B} reduced
  words}, Electron. J. Combin. \textbf{21} (2014), no.~4, Paper 4.6, 39.
  \MR{3284055}

\bibitem[BS17]{BS17}
Daniel Bump and Anne Schilling, \emph{Crystal {B}ases: {R}epresentations and
  {C}ombinatorics}, World Scientific, 2017.

\bibitem[Cho13]{Cho13}
Soojin Cho, \emph{A new {L}ittlewood-{R}ichardson rule for {S}chur
  {$P$}-functions}, Trans. Amer. Math. Soc. \textbf{365} (2013), no.~2,
  939--972.

\bibitem[GJK{\etalchar{+}}10]{GJKKK10}
Dimitar Grantcharov, Ji~Hye Jung, Seok-Jin Kang, Masaki Kashiwara, and Myungho
  Kim, \emph{Quantum queer superalgebra and crystal bases}, Proc. Japan Acad.
  Ser. A Math. Sci. \textbf{86} (2010), no.~10, 177--182. \MR{2752232}

\bibitem[GJK{\etalchar{+}}14]{GJKKK14}
\bysame, \emph{Crystal bases for the quantum queer superalgebra and
  semistandard decomposition tableaux}, Trans. Amer. Math. Soc. \textbf{366}
  (2014), no.~1, 457--489.

\bibitem[GJKK10]{GJKK10}
Dimitar Grantcharov, Ji~Hye Jung, Seok-Jin Kang, and Myungho Kim, \emph{Highest
  weight modules over quantum queer superalgebra {$U_q(\mathfrak{q}(n))$}},
  Comm. Math. Phys. \textbf{296} (2010), no.~3, 827--860.

\bibitem[Hai89]{Hai89}
Mark~D. Haiman, \emph{On mixed insertion, symmetry, and shifted {Y}oung
  tableaux}, J. Combin. Theory Ser. A \textbf{50} (1989), no.~2, 196--225.
  \MR{989194}

\bibitem[Hir]{Hir}
Toya Hiroshima, \emph{$\mathfrak{q}$-crystal structure on primed tableaux and
  on signed unimodal factorizations of reduced words of type {$B$}},
  arXiv:1803.05775v3.

\bibitem[HMP17]{HMP17}
Zachary Hamaker, Eric Marberg, and Brendan Pawlowski, \emph{Involution
  {S}chubert-{C}oxeter combinatorics}, S\'em. Lothar. Combin. \textbf{78B}
  (2017), Art. 51, 12. \MR{3678633}

\bibitem[HPS17]{HPS17}
Graham Hawkes, Kirill Paramonov, and Anne Schilling, \emph{Crystal analysis of
  type {$C$} {S}tanley symmetric functions}, Electron. J. Combin. \textbf{24}
  (2017), no.~3, Paper 3.51, 32. \MR{3711093}

\bibitem[Jac41]{Jac41}
C.~G.~J. Jacobi, \emph{De functionibus alternantibus earumque divisione per
  productum e differentiis elementorum conflatum}, J. Reine Angew. Math.
  \textbf{22} (1841), 360--371. \MR{1578283}

\bibitem[Kac77]{Kac77}
V.~G. Kac, \emph{Lie superalgebras}, Advances in Math. \textbf{26} (1977),
  no.~1, 8--96.

\bibitem[Kas90]{Kas90}
Masaki Kashiwara, \emph{Crystalizing the {$q$}-analogue of universal enveloping
  algebras}, Comm. Math. Phys. \textbf{133} (1990), no.~2, 249--260.

\bibitem[Kas91]{Kas91}
M.~Kashiwara, \emph{On crystal bases of the {$Q$}-analogue of universal
  enveloping algebras}, Duke Math. J. \textbf{63} (1991), no.~2, 465--516.
  \MR{1115118}

\bibitem[KN94]{KN94}
Masaki Kashiwara and Toshiki Nakashima, \emph{Crystal graphs for
  representations of the {$q$}-analogue of classical {L}ie algebras}, J.
  Algebra \textbf{165} (1994), no.~2, 295--345.

\bibitem[Knu70]{Knu70}
Donald~E. Knuth, \emph{Permutations, matrices, and generalized {Y}oung
  tableaux}, Pacific J. Math. \textbf{34} (1970), 709--727. \MR{0272654}

\bibitem[Lit94]{Lit94}
Peter Littelmann, \emph{A {L}ittlewood-{R}ichardson rule for symmetrizable
  {K}ac-{M}oody algebras}, Invent. Math. \textbf{116} (1994), no.~1-3,
  329--346.

\bibitem[Lit95]{Lit95}
\bysame, \emph{Crystal graphs and {Y}oung tableaux}, J. Algebra \textbf{175}
  (1995), no.~1, 65--87. \MR{1338967}

\bibitem[Lus90]{Lus90}
G.~Lusztig, \emph{Canonical bases arising from quantized enveloping algebras},
  J. Amer. Math. Soc. \textbf{3} (1990), no.~2, 447--498. \MR{1035415}

\bibitem[Pra91]{Pra91}
Piotr Pragacz, \emph{Algebro-geometric applications of {S}chur {$S$}- and
  {$Q$}-polynomials}, Topics in invariant theory ({P}aris, 1989/1990), Lecture
  Notes in Math., vol. 1478, Springer, Berlin, 1991, pp.~130--191. \MR{1180989}

\bibitem[Rob38]{Rob38}
G.~de~B. Robinson, \emph{On the {R}epresentations of the {S}ymmetric {G}roup},
  Amer. J. Math. \textbf{60} (1938), no.~3, 745--760.

\bibitem[Sag87]{Sag87}
Bruce~E. Sagan, \emph{Shifted tableaux, {S}chur {$Q$}-functions, and a
  conjecture of {R}. {S}tanley}, J. Combin. Theory Ser. A \textbf{45} (1987),
  no.~1, 62--103. \MR{883894}

\bibitem[Sch01]{Sch01}
I.~Schur, \emph{\"uber ein {K}lasse von {M}atrizen die sich einer gegebenen
  {M}atrix zuordnen lassen}, 1901, Dissertation, Berlin.

\bibitem[Sch11]{Sch11}
\bysame, \emph{\"uber die {D}arstellung der symmetrischen und der
  alternierenden {G}ruppe durch gebrochene lineare {S}ubstitutionen}, J. Reine
  Angew. Math. \textbf{139} (1911), 155--250. \MR{1580818}

\bibitem[Sch61]{Sch61}
C.~Schensted, \emph{Longest increasing and decreasing subsequences}, Canad. J.
  Math. \textbf{13} (1961), 179--191. \MR{0121305}

\bibitem[Ser84]{Ser84}
A.~N. Sergeev, \emph{Tensor algebra of the identity representation as a module
  over the {L}ie superalgebras {${\rm Gl}(n,\,m)$}\ and {$Q(n)$}}, Mat. Sb.
  (N.S.) \textbf{123(165)} (1984), no.~3, 422--430.

\bibitem[Ser10]{Ser10}
Luis Serrano, \emph{The shifted plactic monoid}, Math. Z. \textbf{266} (2010),
  no.~2, 363--392.

\bibitem[Sta84]{Sta84}
Richard~P. Stanley, \emph{On the number of reduced decompositions of elements
  of {C}oxeter groups}, European J. Combin. \textbf{5} (1984), no.~4, 359--372.

\bibitem[Ste89]{Ste89}
John~R. Stembridge, \emph{Shifted tableaux and the projective representations
  of symmetric groups}, Adv. Math. \textbf{74} (1989), no.~1, 87--134.
  \MR{991411}

\bibitem[Ste03]{Ste03}
\bysame, \emph{A local characterization of simply-laced crystals}, Trans. Amer.
  Math. Soc. \textbf{355} (2003), no.~12, 4807--4823 (electronic).

\bibitem[Wor84]{Wor84}
Dale~Raymond Worley, \emph{A {THEORY} {OF} {SHIFTED} {YOUNG} {TABLEAUX}},
  ProQuest LLC, Ann Arbor, MI, 1984, Thesis (Ph.D.)--Massachusetts Institute of
  Technology. \MR{2941073}

\end{thebibliography}

\end{document}